\newtheorem{theorem}{Theorem}[section]
\newtheorem{proposition}[theorem]{Proposition}
\newtheorem{lemma}[theorem]{Lemma}
\newtheorem{corollary}[theorem]{Corollary}
\theoremstyle{definition}
\newtheorem{definition}[theorem]{Definition}
\newtheorem{remark}[theorem]{Remark}
\newtheorem{assumption}[theorem]{Assumption}
\numberwithin{equation}{section}
\begin{document}

\baselineskip=15pt

\title[Branched projective structures and logarithmic connections]{Branched 
projective structures on a Riemann surface and logarithmic connections}

\author[I. Biswas]{Indranil Biswas}

\address{School of Mathematics, Tata Institute of Fundamental
Research, Homi Bhabha Road, Mumbai 400005, India}

\email{indranil@math.tifr.res.in}

\author[S. Dumitrescu]{Sorin Dumitrescu}

\address{Universit\'e C\^ote d'Azur, CNRS, LJAD, France}

\email{dumitres@unice.fr}

\author[S. Gupta]{Subhojoy Gupta}

\address{Department of Mathematics, Indian Institute of Science, Bangalore 560012, India}

\email{subhojoy@iisc.ac.in}

\subjclass[2010]{51N15, 30F30, 32G15}

\keywords{Riemann surface, branched projective structure, logarithmic connection, meromorphic quadratic differential,
residue, local monodromy, second fundamental form.}

\date{}
\begin{abstract}
We study the set ${\mathcal P}_S$ consisting of all branched holomorphic projective structures on a
compact Riemann surface $X$ of genus $g
\,\geq\, 1$ and with a fixed branching divisor $S\, :=\, \sum_{i=1}^d n_i\cdot x_i$, where $x_i \,\in\, X$. Under the hypothesis
that $n_i\,=\,1$, for all $i$, with $d$ a positive even integer such that $d \,\neq\, 2g-2$, we show that ${\mathcal P}_S$
coincides with a subset of the set of all logarithmic connections with singular locus $S$, satisfying certain geometric conditions, on
the rank two holomorphic jet bundle $J^1(Q)$, where $Q$ is a fixed holomorphic
line bundle on $X$ such that $Q^{\otimes 2}\,=\, TX\otimes {\mathcal O}_X(S)$. 
The space of all logarithmic connections of the above type is an affine space over the vector
space $H^0(X, K^{\otimes 2}_X \otimes {\mathcal O}_X(S))$ of dimension $3g-3+d$. We conclude that
${\mathcal P}_S$ is a subset of this affine space that has codimenison $d$ at a generic point. 
\end{abstract}

\maketitle

\tableofcontents

\section{Introduction}

A (holomorphic) projective structure on a Riemann surface $X$ is a holomorphic atlas with 
local coordinates in the projective line ${\mathbb C}{\mathbb P}^1$ such that the transition 
maps are restrictions of elements in the M\"obius group $\text{PGL}(2,{\mathbb C})$. Such 
structures arise naturally in the study of second order ordinary differential equations and 
had a major role in the understanding of uniformization theorem for Riemann surfaces (see, for 
example, \cite{Gu} or Chapter VIII of \cite{St}). More precisely, the uniformization theorem 
asserts that any Riemann surface admits a holomorphic projective structure such that the 
corresponding developing map, which is a holomorphic map from the universal cover 
$\widetilde{X}$ of $X$ to ${\mathbb C}{\mathbb P}^1$, is an embedding (with image a round 
unitary disk, if $X$ is compact of genus $g\,\geq\, 2$).

A compact Riemann surface $X$ with genus $g\,\geq\, 2$ is thus uniformized as a quotient of the Poincar\'e's upper-half plane by a Fuchsian group. Using the corresponding projective structure on $X$ as the base point, 
the space of holomorphic projective structures on $X$ is naturally identified with the space of holomorphic sections
$\text{H}^0(X,\, K^{\otimes 2}_X)$ of the 
square of the canonical bundle $K_X$ of $X$. This space of holomorphic quadratic differentials has complex dimension 
$3g-3$ \cite{Gu}.

A more flexible notion of projective structure, which is stable under pull-back through 
ramified covers (and not just \'etale covers), is that of a {\it branched} (holomorphic) 
projective structure, introduced and studied by Mandelbaum in \cite{M1, M2}. A branched 
projective structure is defined by a holomorphic atlas with local charts being finite branched 
coverings of open subsets in ${\mathbb C}{\mathbb P}^1$, while the transition maps are 
restrictions of elements in the M\"obius group $\text{PGL}(2,{\mathbb C})$ (its definition is 
recalled in Section \ref{prelim}). Away from the branching divisor $S$, the Riemann surface 
$X$ inherits a holomorphic projective structure in the classical sense. These structures 
arise, for example, in the study of conical hyperbolic structures on Riemann surfaces or in 
the theory of codimension one transversally projective holomorphic foliations (see, for 
example, \cite{CDF}).

Geometrically, a holomorphic projective structure on a Riemann surface $X$ is known to be given by a flat ${\mathbb 
C}{\mathbb P}^1$-bundle over $X$ endowed with a holomorphic section which is transverse to the
horizontal distribution defining the flat structure (this 
section is also called the developing map of the projective structure). In the branched case, this section fails to be 
transverse to the horizontal distribution exactly at points in the branching divisor $S$ (see, for 
example, \cite{CDF, GKM, LM} or Section \ref{prelim} here).

An unbranched projective structure on a compact Riemann surface $X$ produces a flat 
holomorphic connection on a certain rank two holomorphic vector bundle on $X$. This 
holomorphic vector bundle does not depend on the projective structure: it is the 
unique nontrivial extension of $K^{-1/2}_X$ by $K^{1/2}_X$. The projectivization of this flat rank two 
vector bundle yields the flat ${\mathbb C}{\mathbb P}^1$--bundle over $X$ mentioned above. 
Given a branched projective structure on $X$, we again have a flat holomorphic connection on a 
holomorphic ${\mathbb C}{\mathbb P}^1$--bundle over $X$. But now this
${\mathbb C}{\mathbb P}^1$--bundle, and hence the corresponding
holomorphic rank two vector bundle, in general 
depends on the branched projective structure. The main result in this article is that a branched projective structure produces a 
holomorphic vector bundle of rank two equipped with a logarithmic connection, such that
the holomorphic vector bundle is 
independent of the choice of the branched projective structure, as long as the branching 
divisor is fixed.

More precisely, fix a Riemann surface $X$ with genus $g \,\geq\, 1$ and an effective divisor $S\, :=\, \sum_{i=1}^d 
n_i\cdot x_i$ on $X$. Consider ${\mathcal P}_S$, the space
all branched holomorphic projective structures on $X$ with $S$ as the branching 
divisor. To ease our notation, we work under the simplifying assumption that

\begin{assumption}\label{a0}
Each $n_i\,=\,1$ and $d\,=\, \# S$ is an even integer such that $d \,\neq\, 2g-2$. 
\end{assumption}

Fix a holomorphic line bundle $Q$ on $X$ such that $Q^{\otimes 2}\,=\, TX\otimes {\mathcal 
O}_X(S)$.

Our first main result (Theorem \ref{thm1}) shows that a branched projective structure on $X$ with branching divisor $S$ 
is the same data as a logarithmic connection $D^1$ on the (rank two) first jet bundle $J^1(Q)$ with
singular locus $S$ (so $D^1$ is nonsingular over 
$X\setminus S$) that satisfies certain geometric conditions. The conditions in question are:
\begin{enumerate}
\item the residue of $D^1$ at each 
point of $S$ has eigen-values $0$ and $-1$,

\item the eigen-space for the eigen-value $-1$ is the line given by the kernel
of the natural projection $J^1(Q)\, \longrightarrow\, J^0(Q)\,=\, Q$,

\item for each point $y\, \in\, S$, the homomorphism $\rho(D^1, y)$ in Proposition \ref{propn1} (from the eigen-space of $\text{Res}(D^1,y)$ for the eigen-value
$0$ to the eigen-space of $\text{Res}(D^1,y)$ for the eigen-value $-1$ tensored with the fiber $(K_X)_y$) vanishes; this
 is equivalent to the condition that the local monodromy of $D^1$ around $y$ is trivial (see Proposition
\ref{propn1}), and

\item the logarithmic connection on $\bigwedge^2 J^1(Q)\,=\, {\mathcal O}_X(S)$ coincides with the one given by the de Rham 
differential.
\end{enumerate}

Let ${\mathcal C}(Q)$ denote the space of all logarithmic connections satisfying the first, second and fourth conditions
(note that condition three is omitted). So the elements of ${\mathcal C}(Q)$ that satisfy the third condition
produce branched projective structure on $X$ with branching divisor $S$.
However, two different elements of this subset of ${\mathcal C}(Q)$ can produce the
same branched projective structure on $X$ with branching divisor $S$. Indeed, two elements of this subset of
${\mathcal C}(Q)$ that differ by an automorphism of $J^1(Q)$ produce the same
branched projective connection (see Remark \ref{remnu}). This ambiguity is removed by identifying a special class
of logarithmic connections, which we now explain.

Given a logarithmic connections on $J^1(Q)$, singular over $S$, there is a homomorphism
$J^1(Q)\, \longrightarrow\,Q\otimes K_X\otimes {\mathcal O}_X(S)$ (see Lemma \ref{lemext}). An element of
${\mathcal C}(Q)$ is called {\it special} if the corresponding
homomorphism $J^1(Q)\, \longrightarrow\,Q\otimes K_X\otimes {\mathcal O}_X(S)$ vanishes identically (see Definition \ref{def1}). Let
${\mathcal C}^0(Q)\, \subset\, {\mathcal C}(Q)$ be the locus of special logarithmic connections.
The space of ${\mathcal P}_S$ of all branched projective structures with
branching divisor $S$ is in bijection with the subset of ${\mathcal C}^0(Q)$ defined by the special 
logarithmic connections that satisfy the third of the above four conditions (Corollary \ref{cor4}). This is
proved by showing that the composition
$$
{\mathcal C}^0(Q) \, \hookrightarrow\, {\mathcal C}(Q) \, \longrightarrow\, {\mathcal C}(Q)/\text{Aut}(J^1(Q))
$$
is a bijection, where $\text{Aut}(J^1(Q))$ is the group of all holomorphic automorphisms of $J^1(Q)$ that act
trivially on $\bigwedge^2 J^1(Q)$.

Summarizing:
there is a natural surjective map to ${\mathcal P}_S$ from the space of logarithmic connections on $J^1(Q)$ singular over $S$
satisfying the above four conditions; the restriction of this map to the subset of special connections is both injective and
surjective.

On the other hand, the space of all special connections ${\mathcal C}^0(Q)$ is an affine space over the vector space of
$H^0(X,\, K^{\otimes 2}_X\otimes {\mathcal O}_X(S))$ (Proposition \ref{prop3} and Corollary \ref{cor3}).
As a consequence, we obtain the following result:

\begin{theorem}
Let $X$ be a Riemann surface of genus $g\,\geq\, 1$, and $S$ be a divisor of degree $d$ satisfying Assumption
\ref{a0}. Then the space of branched projective structures ${\mathcal P}_S$ on $X$ is canonically a subset of codimension $d$ of an affine space over the vector space $H^0(X, K^{\otimes 2}_X\otimes {\mathcal O}_X(S))$ of holomorphic quadratic differentials with at most simple poles over $S$.
\end{theorem}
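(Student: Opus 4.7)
The argument assembles three previously established results with a codimension estimate. By Corollary \ref{cor4}, there is a canonical bijection between $\mathcal{P}_S$ and the subset of $\mathcal{C}^0(Q)$ defined by condition $(3)$ of the introduction, namely the vanishing $\rho(D^1,y)=0$ for every $y\in S$. By Proposition \ref{prop3} and Corollary \ref{cor3}, the space $\mathcal{C}^0(Q)$ itself is an affine space over $H^0(X,\, K^{\otimes 2}_X \otimes \mathcal{O}_X(S))$. Combining the two, $\mathcal{P}_S$ sits canonically inside an affine space of the type described in the statement, and the remaining task is to show that the defining conditions cut out a subset of codimension exactly $d$.

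Conditions $(1)$ and $(2)$ ensure that for each $y\in S$ the residue $\text{Res}(D^1,y)$ has distinct eigenvalues $0,-1$ with the $(-1)$-eigenline canonically equal to $\ker(J^1(Q)\to Q)_y$. The complementary $0$-eigenline and the induced projection onto the $(-1)$-eigenline then depend canonically on $D^1$, so once trivializations of these lines are fixed each $\rho(D^1,y)$ becomes a well-defined complex-valued function of $D^1$. I would assemble these into a single map
$$
\Phi\,:\,\mathcal{C}^0(Q)\,\longrightarrow\,\bigoplus_{y\in S}\mathbb{C}\,,
$$
whose zero locus is precisely the image of $\mathcal{P}_S$ in $\mathcal{C}^0(Q)$. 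Since $\text{Res}(D^1,y)$ depends linearly on $D^1$ and all remaining ingredients entering $\rho$ are canonical in the presence of conditions $(1),(2),(4)$, the map $\Phi$ is affine.

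The main technical step, and the only place where new computation is required, is to identify the linear part of $\Phi$ with the natural residue map
$$
\text{Res}\,:\,H^0(X,\, K^{\otimes 2}_X \otimes \mathcal{O}_X(S))\,\longrightarrow\,\bigoplus_{y\in S}\mathbb{C},
$$
which sends a meromorphic quadratic differential to the $d$-tuple of its residues at the points of $S$. I would verify this pointwise: fix $y\in S$, a local coordinate $z$ centered at $y$, and a local frame of $J^1(Q)$ adapted to the filtration from condition $(2)$; then expand the connection matrix of $D^1+\varphi$ in powers of $z$ and read off that the off-diagonal entry of the residue matrix, which governs $\rho(D^1,y)$, is shifted by a nonzero scalar multiple of $\text{Res}_y(\varphi)$ when $\varphi\in H^0(X,\, K^{\otimes 2}_X \otimes \mathcal{O}_X(S))$ is added to $D^1$. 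This local identification is the heart of the argument.

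Granted this identification, surjectivity of the residue map is a standard cohomological computation: the cokernel of $\text{Res}$ embeds into $H^1(X,\,K^{\otimes 2}_X)\cong H^0(X,\,TX)^\ast$ via the long exact sequence associated to
$$
0\,\longrightarrow\, K^{\otimes 2}_X\,\longrightarrow\, K^{\otimes 2}_X\otimes\mathcal{O}_X(S)\,\longrightarrow\, (K^{\otimes 2}_X\otimes\mathcal{O}_X(S))\big|_S\,\longrightarrow\,0,
$$
and the latter vanishes for $g\geq 2$; the borderline case $g=1$ is accommodated by a direct dimension count exploiting Assumption \ref{a0}. Hence the linear part of $\Phi$ is surjective, its kernel has codimension exactly $d$ in the model vector space, and so $\Phi^{-1}(0)=\mathcal{P}_S$ is a subset of codimension $d$ of the affine space $\mathcal{C}^0(Q)$, as claimed.
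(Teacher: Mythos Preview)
Your assembly of Corollary \ref{cor4}, Proposition \ref{prop3}, and Corollary \ref{cor3} to realize $\mathcal P_S$ as the locus $\{\rho(D,y)=0\ \forall\, y\in S\}$ inside the affine space $\mathcal C^0(Q)$ is exactly what the paper does; the theorem in the introduction is a summary of those results together with Remark \ref{rem-sub}, and the paper gives no argument beyond observing that each condition $\rho(D,y)=0$ cuts out a codimension-one hypersurface, whence the intersection has codimension $d$ \emph{at a generic point}.

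Where you diverge from the paper is in attempting to upgrade this to exact codimension $d$ everywhere by showing that $\Phi$ is affine with surjective linear part equal to the residue map. There are two issues. First, your justification that $\Phi$ is affine (``$\mathrm{Res}(D^1,y)$ depends linearly on $D^1$ and all remaining ingredients entering $\rho$ are canonical'') is not adequate: by construction (Proposition \ref{propn1}), $\rho(D,y)$ depends on the \emph{sub-leading} behaviour of $D$ at $y$, not merely on its residue, and moreover the $0$-eigenline $\ell_0$ moves with $D$. The correct way to see an affine map with the same zero locus is through the second-order operator picture of Section \ref{se7}: the condition of trivial local monodromy for $u''+qu=0$ with $q$ having a simple pole is exactly the vanishing of the polar coefficient of $q$, and \emph{that} map is visibly affine with linear part the evaluation/residue map. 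You should route the argument through $\mu_D$ rather than through $\rho$ directly.

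Second, and more seriously, your treatment of $g=1$ does not work. For $g=1$ one has $K_X\cong\mathcal O_X$, so the long exact sequence for $0\to K_X^{\otimes 2}\to K_X^{\otimes 2}\otimes\mathcal O_X(S)\to (K_X^{\otimes 2}\otimes\mathcal O_X(S))|_S\to 0$ reads
\[
0\ \longrightarrow\ \mathbb C\ \longrightarrow\ \mathbb C^{\,d}\ \longrightarrow\ \mathbb C^{\,d}\ \longrightarrow\ H^1(X,\mathcal O_X)\cong\mathbb C\ \longrightarrow\ 0,
\]
so the residue map has one-dimensional cokernel and is \emph{not} surjective; no ``direct dimension count exploiting Assumption \ref{a0}'' repairs this. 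Thus your method cannot yield exact codimension $d$ in genus one, which is consistent with the paper only claiming codimension $d$ at a generic point (see the abstract and Remark \ref{rem-sub}). In short: your core argument matches the paper, but the strengthening you attempt both requires a more careful choice of the map $\Phi$ and fails outright for $g=1$.
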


This generalizes Theorem 3 of \cite{M1} which handled the case when the degree of the divisor does not exceed $2g-2$.

The structure of the paper is as follows. Section \ref{s2} introduces the main definitions 
and presents the geometric description of elements of ${\mathcal P}_S$ as flat ${\mathbb 
C}{\mathbb P}^1$--bundles over $X$ endowed with a generically transverse section. Section \ref{s3} gives an 
equivalent linear description of elements in ${\mathcal P}_S$ as flat rank two holomorphic 
vector bundles with a special line subbundle $L$ of degree $g-1-\frac{d}{2}$. From this 
view-point $S$ appears as the divisor of the second fundamental form of $L$ with respect to 
the flat connection (see Lemma \ref{lem1}). In Section \ref{sec.l}, we first recall the definitions of logarithmic
connections and their residue. Proposition \ref{propn1} plays a crucial role in relating branched projective structures
with logarithmic connections. Section \ref{s4} contains the proof of Theorem 
\ref{thm1} which describes elements in ${\mathcal P}_S$ as logarithmic connections on the jet 
bundle $J^1(Q)$ satisfying some specific residue conditions at points in $S$. In Section 
\ref{s5} we show that we can construct such logarithmic connections on the jet bundle 
$J^1(Q)$, prescribing the previous residue conditions at $S$. In Section 
\ref{s6} we define a special class of logarithmic connections on $J^1(Q)$ which parametrizes 
${\mathcal P}_S$ bijectively (Definition \ref{def1} and Corollary \ref{cor4}). This implies that the space 
of these logarithmic connections is naturally identified with an affine space over $H^0(X, 
K^{\otimes 2}_X \otimes {\mathcal O}_X(S))$ (Proposition \ref{prop3}). Section \ref{se7} 
identifies ${\mathcal P}_S$ with a subspace of second order differential operators satisfying 
some natural geometric conditions (Lemma \ref{lem4}).

\section{Preliminaries}\label{s2}

\subsection{Branched projective structure}\label{prelim} 

Let $X$ be a connected Riemann surface. Fix a nonempty finite subset
$$
S_0\, :=\, \{x_1, \, \cdots ,\, x_d\}\, \subset\, X
$$
of $d$ distinct points. For each $x_i$, $1\, \leq\, i\, \leq\, d$, fix an integer $n_i\, \geq\, 1$.
Let
\begin{equation}\label{e0}
S\, :=\, \sum_{i=1}^d n_i\cdot x_i
\end{equation}
be an effective divisor on $X$.

The group of all holomorphic automorphisms of ${\mathbb C}{\mathbb P}^1$ is the M\"obius group
$\text{PGL}(2,{\mathbb C})$. Any
$$
\begin{pmatrix}
a & b\\
c& d
\end{pmatrix} \, \in\, \text{PGL}(2,{\mathbb C})
$$
acts on ${\mathbb C}{\mathbb P}^1\,=\, {\mathbb C}\cup \{\infty\}$ as
$z\, \longmapsto\, \frac{az+b}{cz+d}$.

A branched projective structure on $X$ with branching type $S$ (defined in \eqref{e0})
is given by data $\{(U_j,\, \phi_j)\}_{j\in J}$, where
\begin{enumerate}
\item $U_j\, \subset\, X$ is a connected open subset with $\# (U_j\bigcap S_0)\, \leq\, 1$ such that
$\bigcup_{j\in J} U_j\,=\, X$,

\item $\phi_j\, :\, U_j\,\longrightarrow\, {\mathbb C}{\mathbb P}^1$ is a holomorphic map
which is an immersion on the complement $U_j\setminus (U_j\bigcap S_0)$,

\item if $U_j\bigcap S_0\,=\, x_i$, then $\phi_j$ is of degree $n_i+1$ and totally ramified at $x_i$,
while $\phi_j$ is an embedding if $U_j\bigcap S_0\,=\, \emptyset$, and

\item for every $j,\, j'\, \in\, J$ and every connected component $U$ of $U_j\bigcap U_{j'}$ there is an
element $f_{j,j'}\, \in \, \text{PGL}(2,{\mathbb C})$, such that $\phi_{j}\, =\, f_{j,j'}\circ
\phi_{j'}$ on $U$.
\end{enumerate}

Two data $\{(U_j,\, \phi_j)\}_{j\in J}$ and $\{(U'_j,\, \phi'_j)\}_{j\in J'}$ satisfying the above
conditions are called \textit{equivalent} if their union $\{(U_j,\, \phi_j)\}_{j\in J}
\bigcup \{(U'_j,\, \phi'_j)\}_{j\in J'}$ also satisfies the above
conditions.

A \textit{branched projective structure} on $X$ with branching type $S$ is an equivalence 
class of data $\{(U_j,\, \phi_j)\}_{j\in J}$ satisfying the above conditions. This definition
was introduced in \cite{M1}, \cite{M2}.

We now give an equivalent geometric description in terms of a flat ${\mathbb C}{\mathbb P}^1$-bundle over $X$ and a
holomorphic section which fails to be transverse exactly at points in $S$ (compare with \cite{LM}).

Over ${\mathbb C}{\mathbb P}^1$, we have the trivial holomorphic bundle
$$
p_1\, :\, {\mathcal P}_0\, :=\, {\mathbb C}{\mathbb P}^1\times {\mathbb C}{\mathbb P}^1\, \longrightarrow {\mathbb C}{\mathbb P}^1
$$
with fiber ${\mathbb C}{\mathbb P}^1$, where $p_1$ is the projection to the first factor. This
projective bundle ${\mathcal P}_0$ is equipped with the trivial holomorphic connection, which we will denote by
$D_0$. The bundle ${\mathcal P}_0$ is also equipped with a holomorphic section
$$
s_0\, :\,{\mathbb C}{\mathbb P}^1 \, \longrightarrow\, {\mathbb C}{\mathbb P}^1\times {\mathbb C}{\mathbb P}^1
\,=\, {\mathcal P}_0\, ,\ \ x\, \longmapsto\, (x,\, x)\, .
$$

The earlier mentioned action of $\text{PGL}(2,{\mathbb C})$ on ${\mathbb C}{\mathbb P}^1$ 
lifts to ${\mathbb C}{\mathbb P}^1\times {\mathbb C}{\mathbb P}^1$ as the diagonal action. 
This action of $\text{PGL}(2,{\mathbb C})$ on ${\mathcal P}_0\,=\, {\mathbb C}{\mathbb 
P}^1\times {\mathbb C}{\mathbb P}^1$ evidently preserves the connection $D_0$ and also the 
above section $s_0$.

Given a branched projective structure $\{(U_j,\, \phi_j)\}_{j\in J}$, for every $j\, \in\, J$, we have
the projective bundle $\phi^*_j{\mathcal P}_0\, \longrightarrow\, U_j$ equipped with the
holomorphic connection $\phi^*_j D_0$ and the holomorphic section $\phi^*_j s_0$. Since
$({\mathcal P}_0,\, D_0,\, s_0)$ is $\text{PGL}(2,{\mathbb C})$--equivariant, for any
connected component $U\, \subset\,
U_j\bigcap U_{j'}$, the two bundles with connection and
section $(\phi^*_j{\mathcal P}_0,\, \phi^*_jD_0,\, \phi^*_j s_0)$ and
$(\phi^*_{j'}{\mathcal P}_0,\, \phi^*_{j'}D_0,\, \phi^*_{j'} s_0)$ patch together compatibly over
$U \subset U_j \bigcap U_{j'}$ using $f_{j,j'}\, \in\, \text{PGL}(2,{\mathbb C})$ (see the fourth
condition in the earlier definition of data giving a branched projective
structure). Therefore, we get a holomorphic projective bundle
\begin{equation}\label{e1}
q\, :\, {\mathcal P}_1\, \longrightarrow\, X
\end{equation}
equipped with a flat holomorphic connection $D_1$ and also a holomorphic section
\begin{equation}\label{s1}
s_1\,:\, X \, \longrightarrow\, {\mathcal P}_1
\end{equation}
(hence $q\circ s_1\,=\, \text{Id}_X$).

In the sequel, the holomorphic tangent bundle of a complex manifold $Z$ will be denoted by $TZ$.

Let $dq\, :\, T{\mathcal P}_1\, \longrightarrow\, q^*TX$ be the differential of the projection
$q$ in \eqref{e1}. The connection $D_1$ constructed above is given by a holomorphic homomorphism
$$
H_{D_1}\, :\, q^*TX\, \longrightarrow\, T{\mathcal P}_1
$$
such that $dq\circ H_{D_1}\,=\, \text{Id}_{q^*TX}$. The subbundle $H_{D_1}(q^*TX)\, \subset\,
T{\mathcal P}_1$ is called the horizontal subbundle for the connection $D_1$.
Let
$$
T_q\, :=\, \text{kernel}(dq)\, \subset\, T{\mathcal P}_1
$$
be the relative tangent bundle for the projection $q$. We note that
\begin{equation}\label{e2}
T{\mathcal P}_1\,=\, T_q\oplus H_{D_1}(q^*TX)\, .
\end{equation}
Let
\begin{equation}\label{e3}
\widehat{D}_1\, :\, T{\mathcal P}_1\, \longrightarrow\, T_q
\end{equation}
be the projection for the decomposition in \eqref{e2}.

Identify $X$ with the image $s_1(X)\, \subset\, {\mathcal P}_1$ using the map $s_1$ in \eqref{s1}.
The differential $ds_1$ identifies $TX$ with the tangent bundle $T(s_1(X))$ of the
manifold $s_1(X)$. Now consider the restriction
\begin{equation}\label{e4}
S(D_1)\, :=\, \widehat{D}_1\vert_{T(s_1(X))}\, :\, T(s_1(X))\,=\, TX\, \longrightarrow\, s^*_1 T_q\, ,
\end{equation}
where $\widehat{D}_1$ is the projection in \eqref{e3}. The divisor for the above section $S(D_1)$
of the line bundle $$\text{Hom}(TX,\, s^*_1 T_q)\,=\, (s^*_1 T_q)\otimes K_X
\,\longrightarrow\, X$$ is the divisor $S$ in
\eqref{e0}, where $K_X$ is the holomorphic cotangent bundle of $X$.

Conversely, let $q'_1\, :\, {\mathcal P}'\, \longrightarrow\, X$ be a holomorphic ${\mathbb C}{\mathbb 
P}^1$--bundle, equipped with a flat holomorphic connection $D'$ and also a holomorphic section $s'$. Let
$$
\widehat{D}'\, :\, T{\mathcal P}'\, \longrightarrow\, T_{q'_1}
$$
be the projection given by the connection $D'$ such that the kernel is the horizontal subbundle for $D'$
(as in \eqref{e3}). Then $({\mathcal P}',\, D',\, s')$ defines a
branched projective structure on
$X$ with branching type $S$ if and only if the divisor of the homomorphism restricted to $s'(X)$
$$
\widehat{D}'\vert_{T(s'(X))}\, :\, T(s'(X))\,=\, TX\, \longrightarrow\, (s')^* T_{q'_1}
$$
coincides with $S$. To see this construct local holomorphic trivializations of ${\mathcal P}'$
such that $D'$ becomes the trivial connection. Now use $s'$ to have local holomorphic
coordinate functions: they define the branched projective structure.

\subsection{Differential operators}\label{sdo}

For a holomorphic vector bundle $E$ on $X$ and any positive integer $n$, let $J^n(E)$ be
the $n$-th order jet bundle for $E$. (See 2(b) of \cite{BR}.) We recall that
$$
J^n(E)\,:=\, p_{1*}((p^*_2E)\otimes ({\mathcal O}_{X\times X}/{\mathcal O}_{X\times X}(-(n+1)
\cdot\Delta)))\, ,
$$
where $p_i\, :\, X\times X\, \longrightarrow\, X$ is projection to the $i$-th factor
($i\,=\, 1,\, 2$) and $\Delta\, \subset\, X\times X$ is the diagonal divisor.
There is a natural short exact sequence of vector
bundles
\begin{equation}\label{je1}
0\, \longrightarrow\, E\otimes K^{\otimes n}_X \,\stackrel{\iota_n}{\longrightarrow}\,
J^n(E)\, \longrightarrow\,J^{n-1}(E) \, \longrightarrow\, 0
\end{equation}
given by the inclusion of the sheaf ${\mathcal O}_{X\times X}(-(n+1)
\cdot\Delta)$ in ${\mathcal O}_{X\times X}(-n\cdot\Delta)$.
The sheaf of holomorphic differential operators of order $n$ from $E$ to a holomorphic vector bundle
$E'$ is defined to be
$$
\text{Diff}^n(E,\, E') \, :=\, \text{Hom}(J^n(E),\, E')\,=\, E'\otimes J^n(E)^*\, .
$$
Consider the homomorphism
\begin{equation}\label{symb}
\gamma_n\, :\, \text{Diff}^n(E,\, E')\,\longrightarrow\, \text{Hom}(E,\, E')\otimes (TX)^{\otimes n}
\end{equation}
defined by the composition
$$
\text{Diff}^n(E,\, E')\,=\, E'\otimes J^n(E)^*\, \stackrel{\text{Id}_{E'}\otimes \iota^*_n}{\longrightarrow}\,
E'\otimes E^*\otimes (TX)^{\otimes n}\,=\, \text{Hom}(E,\, E')\otimes (TX)^{\otimes n}\, ,
$$
where $\iota_n$ is the homomorphism in \eqref{je1}. This $\gamma_n$ is known as the symbol 
homomorphism; we shall use this terminology throughout the paper.

\section{Topological properties of projective bundles}\label{s3}

Henceforth, we will always assume that $X$ is compact.

The isomorphism classes of topological ${\mathbb C}{\mathbb P}^1$--bundles on $X$ are parametrized by ${\mathbb 
Z}/2\mathbb Z$. This classification can be described as follows. Given a ${\mathbb C}{\mathbb P}^1$--bundle
${\mathcal P}\, \longrightarrow\, X$, there is a complex vector bundle $E$ on $X$ of rank two such that
${\mathbb P}(E)\,=\, {\mathcal P}$. This $E$ is not unique. However, if $E'$ is another rank two
complex vector bundle on $X$ such that ${\mathbb P}(E')\,=\,{\mathbb P}(E)\,=\, {\mathcal P}$, then
$E'\,=\, E\otimes L$, where $L$ is a complex line bundle on $X$. This implies that
$\text{degree}(E')\,=\, \text{degree}(E)+2\cdot \text{degree}(L)$. Hence
$$
\text{degree}(E')\,\equiv \, \text{degree}(E)\ \ \text{ mod }\ 2\, .
$$
The isomorphism class of ${\mathcal P}$ is determined by the image of $\text{degree}(E)$ in ${\mathbb Z}/2{\mathbb Z}$.

Take a branched projective structure $P_1$ on $X$ with branching type $S$. Let
${\mathcal P}_1\, \longrightarrow\, X$ be the holomorphic ${\mathbb C}{\mathbb P}^1$--bundle corresponding to 
$P_1$ constructed in \eqref{e1}. Let $E$ be a holomorphic vector bundle on $X$ of rank two such that
${\mathbb P}(E)\,=\, {\mathcal P}_1$. Now the holomorphic section $s_1$ in \eqref{s1} produces a holomorphic
line subbundle
\begin{equation}\label{l1}
L_1\, \subset\, E\, .
\end{equation}
The holomorphic line bundle $s^*_1 T_q$ in \eqref{e4} is identified with $(E_1/L_1)\otimes L^*_1\,=\,
\text{Hom}(L_1,\, E/L_1)$. Since the divisor of the homomorphism $S(D_1)$ in \eqref{e4} is $S$, it
follows immediately that
\begin{equation}\label{is1}
s^*_1 T_q\,=\, \text{Hom}(L_1,\, E/L_1)\,=\, TX\otimes {\mathcal O}_X(S)\, .
\end{equation}
Therefore, we have
$$
\text{degree}(E)\, =\, \text{degree}(L_1)+\text{degree}(E/L_1)\,=\,
\text{degree}(\text{Hom}(L_1,\, E/L_1))+2\cdot \text{degree}(L_1)
$$
$$
=\, \text{degree}({\mathcal O}_X(S))+\text{degree}(TX)+2\cdot \text{degree}(L_1)
\,=\, \sum_{i=1}^d n_i +\text{degree}(TX)+2\cdot \text{degree}(L_1)\, .
$$
This implies that
$$
\text{degree}(E)\,\equiv \, \sum_{i=1}^d n_i\ \ \text{ mod }\ 2\, ,
$$
because $\text{degree}(TX)+2\cdot \text{degree}(L_1)$ is an even integer.

Henceforth, for simplicity, we will always assume the following:

\begin{assumption}\label{a1}\mbox{}
\begin{enumerate}
\item All $n_i\,=\, 1$, $1\,\leq\, i\, \leq\, d$, and

\item $d\,=\, \text{degree}({\mathcal O}_X(S))$ in an even integer.
\end{enumerate}
\end{assumption}

In view of Assumption \eqref{a1}(2), after substituting $E$ by $E\otimes L_2$, where
$L_2$ is a holomorphic line bundle on $X$ with $L^{\otimes 2}_2 \,=\, \bigwedge^2 E^*$, we
get that $$\bigwedge\nolimits^2 E\,=\, {\mathcal O}_X\, .$$
We will always use this normalization $\bigwedge\nolimits^2 E\,=\, {\mathcal O}_X$ of $E$, that is, the determinant line
bundle of $E$ will be trivial.

Notice that Assumption \ref{a1}(2)) is equivalent to the vanishing of the second Stiefel--Whitney class of the bundle $P_1$ constructed in \eqref{e1}. It is also equivalent to the fact that the monodromy representation
of the branched projective structure is liftable to $\text{SL}(2,{\mathbb C})$ (see Corollary 11.2.3 in \cite{GKM}).

Consider the holomorphic line subbundle $L_1$ in \eqref{l1}. From \eqref{is1}
we have
$$
\bigwedge\nolimits^2 E\,=\, L_1\otimes (E/L_1) \,=\, \text{Hom}(L_1, E/L_1)\otimes
L^{\otimes 2}_1\,=\, TX\otimes {\mathcal O}_X(S)\otimes L^{\otimes 2}_1\, .
$$
Since $\bigwedge^2 E\,=\, {\mathcal O}_X$, this implies that
\begin{equation}\label{e5b}
L^{\otimes 2}_1\,=\, K_X\otimes {\mathcal O}_X(-S)\,=\, ((E/L_1)^*)^{\otimes 2}\, .
\end{equation}
So we have
\begin{equation}\label{e5}
(E/L_1)^{\otimes 2}\,=\, TX\otimes {\mathcal O}_X(S)\, .
\end{equation}

Let $F$ be a holomorphic vector bundle on $X$ of rank two equipped with a holomorphic connection
$D$. Let $L\, \subset\, F$ be a holomorphic line subbundle. The quotient map
$F\, \longrightarrow\, F/L$ will be denoted by $q'$. The composition
$$
L\, \hookrightarrow\, F \, \stackrel{D}{\longrightarrow}\, F\otimes K_X
\, \stackrel{q'\otimes\text{Id}_{K_X}}{\longrightarrow}\, (F/L)\otimes K_X
$$
is evidently ${\mathcal O}_X$--linear. Let
\begin{equation}\label{sf}
{\mathcal F}(L, D)\, \in\, H^0(X,\, \text{Hom}(L,\, (F/L)\otimes K_X))
\end{equation}
be the homomorphism obtained by this composition; it is called the second fundamental form
of the subbundle $L$ for the connection $D$.

A holomorphic connection on $E$ induces a holomorphic connection on $\bigwedge^2 E\,=\, {\mathcal O}_X$.
Note that ${\mathcal O}_X$ has a canonical holomorphic connection given by the de Rham differential
$f\, \longmapsto\, df$.
This canonical holomorphic connection on ${\mathcal O}_X$ will be denoted by ${\mathcal D}_0$.

\begin{remark}\label{flat}
Note that any holomorphic connection on a Riemann surface is
automatically flat; the curvature vanishes because there are no nonzero $(2,0)$--forms.
\end{remark} 

\begin{lemma}\label{lem1}
Giving a branched projective structure on $X$, with branching type $S$, is equivalent to
giving a triple $(F,\, L,\, D)$, where
\begin{enumerate}
\item $F$ is a holomorphic vector bundle on $X$ of rank two with $\bigwedge^2 F\,=\, {\mathcal O}_X$,

\item $L\, \subset\, F$ is a holomorphic line subbundle whose degree is ${\rm genus}(X)-1-\frac{d}{2}$,

\item $D$ is a holomorphic connection on $F$ such that the divisor for the section
${\mathcal F}(L, D)$ in \eqref{sf} is $S$, and

\item the holomorphic connection on $\bigwedge^2 F\,=\, {\mathcal O}_X$ induced by $D$ coincides with the
canonical connection ${\mathcal D}_0$.
\end{enumerate}
\end{lemma}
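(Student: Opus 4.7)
The plan is: for the forward direction, extract the triple $(F,\, L,\, D)$ from the $({\mathcal P}_1,\, D_1,\, s_1)$ of \eqref{e1}--\eqref{s1} already associated by the preceding discussion to a branched projective structure; for the converse, reverse the construction. The pivotal technical point is that on a rank two holomorphic vector bundle $E$ with $\bigwedge^2 E\,=\, {\mathcal O}_X$, a holomorphic connection on $E$ is exactly the same data as a pair consisting of a holomorphic connection on $\bigwedge^2 E$ and a holomorphic connection on ${\mathbb P}(E)$. This follows from the trace splitting $\text{End}(E)\,=\, {\mathcal O}_X\cdot \text{Id}_E \oplus \text{End}_0(E)$ (available in rank two over characteristic zero), together with the fact that a connection on ${\mathbb P}(E)$ is the same data as a connection on the trace-free part $\text{End}_0(E)$.

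For the forward direction, the paragraphs preceding the statement have already (invoking Assumption \ref{a1}(2)) produced a rank two vector bundle $E$ with ${\mathbb P}(E)\,=\, {\mathcal P}_1$ and $\bigwedge^2 E\,=\, {\mathcal O}_X$, identified the line subbundle $L_1\, \subset\, E$ coming from $s_1$, and established $\deg L_1\,=\, g-1-d/2$ via \eqref{e5b}. The key principle above then furnishes the unique holomorphic connection $D$ on $E$ that projects to $D_1$ and induces ${\mathcal D}_0$ on $\bigwedge^2 E$; this gives conditions (1), (2), (4) of the lemma. For condition (3), I would check by unwinding definitions that the second fundamental form ${\mathcal F}(L_1,\, D)$ of \eqref{sf} is identified, under \eqref{is1}, with the transversality map $S(D_1)$ of \eqref{e4}, so its divisor equals $S$.

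For the converse, given a triple $(F,\, L,\, D)$ as in the statement, I would set ${\mathcal P}_1\, :=\, {\mathbb P}(F)$ with induced projective connection $D_1$ (automatically flat by Remark \ref{flat}) and section $s_1$ determined by $L\, \subset\, F$. The same computation relates ${\mathcal F}(L,\, D)$ to $S(D_1)$ under \eqref{is1}, so the divisor of $S(D_1)$ equals $S$ by hypothesis, and the converse description at the end of Section \ref{prelim} yields a branched projective structure on $X$ with branching type $S$. The two constructions are mutually inverse: the first sends a branched projective structure to a triple whose projectivization recovers $({\mathcal P}_1,\, D_1,\, s_1)$, and starting from $(F,\, L,\, D)$, the normalization $\bigwedge^2 F\,=\, {\mathcal O}_X$ together with the uniqueness clause in the key principle force the vector bundle data recovered from ${\mathbb P}(F)$ to coincide with the original $(F,\, D)$.

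The principal difficulty I anticipate is the bookkeeping in the passage between projective and vector bundle connections. Condition (4) is precisely the rigidification that makes $(F,\, D)$ uniquely determined by the underlying branched projective structure, while Assumption \ref{a1}(2) supplies exactly the parity needed for the normalization $\bigwedge^2 F\,=\, {\mathcal O}_X$ to be achievable in the first place.
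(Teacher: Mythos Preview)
Your proposal is correct and follows essentially the same approach as the paper's proof: both extract $(F,L)$ from the preceding discussion, lift the projective connection $D_1$ uniquely to a connection $D$ on $F$ inducing ${\mathcal D}_0$ on $\bigwedge^2 F$, and then identify ${\mathcal F}(L,D)$ with $S(D_1)$ via \eqref{is1}. The only cosmetic difference is that the paper justifies the lifting step through the Lie algebra isomorphism for $\text{SL}(2,{\mathbb C}) \to \text{PGL}(2,{\mathbb C})$ rather than your trace splitting of $\text{End}(E)$; these are equivalent formulations of the same fact (though your phrase ``a connection on the trace-free part $\text{End}_0(E)$'' should be read as a splitting of the Atiyah sequence with kernel $\text{End}_0(E)$, not a connection on that vector bundle).
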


\begin{proof}
Take a branched projective structure $P_1$ on $X$ with branching type $S$.
We saw that $P_1$ gives
\begin{enumerate}
\item a holomorphic vector bundle $F$ on $X$ of rank two with $\bigwedge^2 F\,=\, {\mathcal O}_X$,

\item a holomorphic line subbundle $L\, \subset\, F$ whose degree is ${\rm genus}(X)-1-\frac{d}{2}$
(see \eqref{e5b}), and

\item a holomorphic connection $D_1$ on ${\mathcal P}_1\,=\, {\mathbb P}(F)$ such that the
divisor for the homomorphism $S(D_1)$ constructed as in \eqref{e4} is $S$.
\end{enumerate}

Since $\bigwedge^2 F\,=\, {\mathcal O}_X$, giving a holomorphic connection on $F$,
such that the connection on $\bigwedge^2 F\,=\, {\mathcal O}_X$ induced by it coincides with the
canonical connection ${\mathcal D}_0$, is equivalent to giving a holomorphic connection on
the projective bundle ${\mathbb P}(F)$. Indeed, this follows immediately from the
fact that the homomorphism of Lie algebras corresponding to the quotient homomorphism of Lie groups
$$
\text{SL}(2, {\mathbb C})\, \longrightarrow\, \text{PGL}(2, {\mathbb C})
$$
is an isomorphism. A holomorphic connection is a Lie algebra valued holomorphic one-form satisfying
certain conditions on the total space of the corresponding principal bundle. The principal 
$\text{PGL}(2, {\mathbb C})$--bundle for ${\mathbb P}(F)$ is the quotient of the principal
$\text{SL}(2, {\mathbb C})$--bundle for $F$ by the action of the center of $\text{SL}(2, {\mathbb C})$. So there
is an isomorphism between the connections on them simply by pulling back the connection form.

Let $D$ be the holomorphic connection on $F$ corresponding to the
above mentioned holomorphic connection $D_1$ on ${\mathbb P}(F)$.

Recall that the line subbundle $L\, \subset\, F$ corresponds to the section
$s_1$ of ${\mathcal P}_1$ in \eqref{e4} by the identification ${\mathcal P}_1\,=\, {\mathbb P}(F)$.
Using the isomorphism $s^*_1 T_q\,=\, \text{Hom}(L,\, F/L)$ (given in \eqref{is1}), the section
${\mathcal F}(L, D)$ in \eqref{sf} corresponds to the section $S(D_1)$ constructed as in \eqref{e4}.
Therefore, the triple $(F,\, L,\, D)$ satisfies all the conditions in the statement of the lemma.

Conversely, take $(F,\, L,\, D)$ satisfying the conditions in the lemma. Then $L$ defines a holomorphic
section of the holomorphic projective bundle ${\mathbb P}(F)$; this section
will be denoted by $s_1$. The holomorphic connection $D$ on $F$ induces a holomorphic connection on
${\mathbb P}(F)$; this induced connection on ${\mathbb P}(F)$ will be denoted by $D_1$.

Since the section ${\mathcal F}(L, D)$ in \eqref{sf} coincides with the one constructed as
in \eqref{e4}, the triple $({\mathbb P}(F), \, s_1,\, D_1)$ produces a 
branched projective structure on $X$ with branching type $S$.
\end{proof}

\begin{remark}\label{rem1}
Fix a holomorphic line bundle $Q$ on $X$ such that $Q^{\otimes 2}\,=\, TX\otimes {\mathcal O}_X(S)$
(as in \eqref{e5}). In Lemma \ref{lem1} we may choose $F$ such that $F/L\,=\, Q$. To see
this take any triple $(F,\, L,\, D)$ satisfying the conditions in Lemma \ref{lem1}. Then
$(F/L)\otimes L_0\, =\, Q$, where $L_0$ is a holomorphic line bundle of order two, because
$(F/L)^{\otimes 2}\,=\, TX\otimes {\mathcal O}_X(S)\,=\, Q^{\otimes 2}$
(see \eqref{e5}). There is a unique holomorphic connection
$D_0$ on $L_0$ such that the connection on $L^{\otimes 2}\,=\, {\mathcal O}_X$ induced by $D_0$
coincides with the trivial connection ${\mathcal D}_0$ on ${\mathcal O}_X$ given by the de Rham differential.
Let $D'$ be the holomorphic connection on $F\otimes L_0$ induced by $D$ and $D_0$. Then the
triple $(F\otimes L_0,\, L\otimes L_0,\, D')$ satisfies all the conditions in Lemma \ref{lem1}.
Note that using the natural isomorphism $$\text{Hom}(L,\, (F/L)\otimes K_X)\,=\, (F/L)\otimes K_X\otimes L^*
\,=\, \text{Hom}(L\otimes L_0,\, ((F\otimes L_0)/(L\otimes L_0))\otimes K_X)\, ,$$
the second fundamental form ${\mathcal F}(L, D)$ in \eqref{sf} coincides with the second fundamental
${\mathcal F}(L\otimes L_0, D')$ of the subbundle $L\otimes L_0$ for the
connection $D'$. The branched projective structure on $X$, with branching type $S$,
given by the triple $(F\otimes L_0,\, L\otimes L_0,\, D')$ clearly coincides with the
one given by $(F,\, L,\, D)$.
\end{remark}

\section{Logarithmic connection and residue}\label{sec.l}

In this section $Y$ is any connected Riemann surface and $S'$ is a finite subset of points of $Y$.
Like before, the divisor
on $Y$ given by the formal sum of the points of $S'$ will also be denoted by $S'$.
The holomorphic cotangent bundle of $Y$ will be denoted by $K_Y$.

We note that for any point $y\, \in\, S'$, the
fiber $(K_Y\otimes {\mathcal O}_Y(S'))_y$ is identified with $\mathbb C$ by sending any
meromorphic $1$-form defined around
$y$ to its residue at $y$. More precisely, for any holomorphic coordinate function
$z$ on $Y$ defined around the point $y$ with $z(y)\,=\, 0$, consider the homomorphism
\begin{equation}\label{ry}
R_y\, :\, (K_Y\otimes {\mathcal O}_Y(S'))_y\, \longrightarrow\, {\mathbb C}\, ,
\ \ c\cdot \frac{dz}{z} \, \longmapsto\, c\, .
\end{equation}
This homomorphism is in fact independent of the choice of the above coordinate function $z$.

Let $V$ be a holomorphic vector bundle on $Y$.
A \textit{logarithmic connection} on $V$ singular over $S'$ is a holomorphic
differential operator of order one
$$
D\, :\, V\, \longrightarrow\, V\otimes K_Y\otimes {\mathcal O}_Y(S')
$$
such that $D(fs) \,=\, f D(s) + s\otimes df$ for all locally defined holomorphic function $f$ and
all locally defined holomorphic section $s$ of $V$. In other words,
$$
D\, \in\, H^0(Y,\,\text{Hom}(J^1(V),\, V\otimes K_Y\otimes {\mathcal O}_Y(S')))\,=\,
H^0(Y,\,\text{Diff}^1(V,\, V\otimes K_Y\otimes {\mathcal O}_Y(S')))
$$
such that the symbol of $D$
is the holomorphic section of $\text{End}(V)\otimes {\mathcal O}_Y(S')$ given by $\text{Id}_V\otimes 1$.

For a logarithmic connection $D$ on $V$ singular over $S'$, and a point $y\, \in\, S'$, consider
the composition
$$
V\, \stackrel{D}{\longrightarrow}\, V\otimes K_Y\otimes {\mathcal O}_Y(S')
\, \stackrel{\text{Id}_V\otimes
R_y}{\longrightarrow}\, V_y\otimes{\mathbb C}\,=\, V_y\, ,
$$
where $R_y$ is the residue homomorphism constructed in \eqref{ry}. This composition homomorphism vanishes
on the subsheaf $V\otimes {\mathcal O}_Y(-y)\, \subset\, V$, and hence it produces a homomorphism
$$
\text{Res}(D,y)\, :\, V/(V\otimes {\mathcal O}_Y(-y))\,=\, V_y\, \longrightarrow\, V_y\, .
$$
This endomorphism $\text{Res}(D,y)$ of $V_y$ is called the \textit{residue} of the
connection $D$ at the point $y$; see \cite[p.~53]{De}.

Fix a point $y\, \in\, S'$. Let $L$ be a holomorphic line bundle
on $Y$, and let
$$
D\, :\, L\, \longrightarrow\, L\otimes K_Y\otimes {\mathcal O}_Y(S')
$$
be a logarithmic connection on $L$ singular over $S'$ such that the residue
$$
\text{Res}(D,y)\, =\, - 1 \,=\, -\text{Id}_{L_y}\, .
$$

\begin{lemma}\label{lemn1}
Let $s$ be a holomorphic section of $L$ defined on an open neighborhood $U$ of the point
$y$ of $S'$ such that $s(y)\,=\, 0$.
Then the section $$D(s)\, \in\, H^0(U,\, (L\otimes K_Y\otimes {\mathcal O}_Y(S'))\vert_U)$$
vanishes at $y$ at order at least two.
\end{lemma}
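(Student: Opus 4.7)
The plan is to reduce everything to a local coordinate computation around $y$, using the residue hypothesis to extract a cancellation.

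First I would pick a local holomorphic coordinate $z$ on a neighborhood $U$ of $y$ with $z(y)=0$ (and small enough that $U\cap S'=\{y\}$), together with a holomorphic frame $e$ of $L|_U$ with $e(y)\ne 0$. In this trivialization the connection is given by $D(e)=\omega\otimes e$ for some meromorphic $1$-form $\omega$ that has at worst a simple pole at $y$ and is holomorphic elsewhere on $U$. The residue hypothesis $\mathrm{Res}(D,y)=-1$ translates, via the identification $R_y$ from \eqref{ry}, into the statement
\[
\omega \,=\, -\frac{dz}{z}+\eta,
\]
where $\eta$ is a holomorphic $1$-form on $U$. This is the only place the hypothesis enters.

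Next I would write $s=fe$ with $f$ holomorphic and $f(y)=0$, so $f=zg$ for some holomorphic $g$ on $U$. Using the Leibniz rule for $D$,
\[
D(s)\,=\,df\otimes e+f\cdot D(e)\,=\,(df+f\omega)\otimes e.
\]
Substituting $f=zg$ and the expression for $\omega$:
\[
df+f\omega\,=\,(g\,dz+z\,dg)+zg\!\left(-\tfrac{dz}{z}+\eta\right)\,=\,z\,dg+zg\,\eta\,=\,z\bigl(dg+g\,\eta\bigr).
\]
The crucial observation is that the ``$g\,dz$'' term produced by differentiating $zg$ is exactly cancelled by the $-g\,dz$ coming from the residue part $-\frac{dz}{z}$ of $\omega$.

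Finally I would reinterpret this in the bundle $L\otimes K_Y\otimes\mathcal{O}_Y(S')$. Near $y$, a holomorphic frame of $K_Y\otimes\mathcal{O}_Y(S')$ is given by $\frac{dz}{z}$, so $e\otimes\frac{dz}{z}$ is a holomorphic frame of $L\otimes K_Y\otimes\mathcal{O}_Y(S')$ on $U$. Writing $dg+g\,\eta=h(z)\,dz$ for a holomorphic function $h$, the computation above becomes
\[
D(s)\,=\,z\,h(z)\,dz\otimes e\,=\,z^{2}\,h(z)\cdot\bigl(e\otimes\tfrac{dz}{z}\bigr),
\]
which manifestly vanishes to order at least two at $y$. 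The only potential pitfall — and the step that deserves to be spelled out carefully — is this last bookkeeping point: orders of vanishing must be measured against the natural frame $e\otimes\frac{dz}{z}$ of $L\otimes K_Y\otimes\mathcal{O}_Y(S')$ (rather than against $e\otimes dz$), since the twist by $\mathcal{O}_Y(S')$ effectively absorbs one factor of $z$. Once this identification is made, the stated vanishing is immediate from the cancellation forced by $\mathrm{Res}(D,y)=-1$.
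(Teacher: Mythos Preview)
Your proof is correct and follows essentially the same local-coordinate computation as the paper; the only difference is that the paper first normalizes the local model to $({\mathcal O}_Y(y),\,d)$ (effectively absorbing your $\eta$ into the frame) and then writes the identity $df = z^{2}\,(f'/z)\,(dz/z)$, whereas you keep the holomorphic part $\eta$ of the connection form and exhibit the same cancellation directly. Your version is more explicit but the underlying argument is identical.
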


\begin{proof}
The local model of $(L,\, D)$ around $y$ is the line bundle ${\mathcal O}_Y(y)$ equipped with the
logarithmic connection $D_0$ given by the de Rham differential. For a holomorphic coordinate function $z$ on
$U$ with $z(y)\,=\, 0$, and a holomorphic function $f$ defined on $U$,
we have
$$
\frac{df}{dz}\,=\, z^2 (\frac{f'}{z})(\frac{dz}{z})\, .
$$
Hence the lemma follows.
\end{proof}

Let $E$ be a holomorphic vector bundle on $Y$ of rank two. Fix two distinct lines $\ell_0,\, \ell_1\, \subset\, E_y$
(so $E_y\,=\, \ell_0\oplus \ell_1$), where $y\, \in\, S'$ is a fixed point as before. Let
$$
D\,:\, E\, \longrightarrow\, E\otimes K_Y\otimes {\mathcal O}_Y(S')
$$
be a logarithmic connection on $E$ singular over $S'$, such that
\begin{itemize}
\item the residue $\text{Res}(D,y)$ has two eigen-values $-1$ and $0$, and

\item $\ell_1$ (respectively, $\ell_0$) is the eigen-line for the eigen-value $-1$ (respectively, $0$)
of $\text{Res}(D,y)$.
\end{itemize}

\begin{proposition}\label{propn1}\mbox{}
\begin{enumerate}
\item The logarithmic $D$ connection produces a homomorphism
$$
\rho(D, y) \, :\, \ell_0\, \longrightarrow\, \ell_1\otimes (K_Y)_y\, ,
$$
where $(K_Y)_y$ is the fiber of $K_Y$ over the point $y\, \in\, Y$.

\item The local monodromy of $D$ around the point $y$ is trivial if and only if $\rho(D, y) \,=\, 0$.
\end{enumerate}
\end{proposition}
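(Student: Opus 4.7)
For part (1), the plan is to construct $\rho(D,y)$ directly from the data of $D$. Given $v\in\ell_0$, I extend it to a local holomorphic section $\widetilde{v}$ of $E$ near $y$ with $\widetilde{v}(y) = v$. The section $D\widetilde{v}$ is a priori valued in $E\otimes K_Y\otimes {\mathcal O}_Y(S')$, but because $v$ lies in the $0$-eigenspace of $\text{Res}(D,y)$, its residue vanishes, so $D\widetilde{v}$ is in fact a holomorphic section of $E\otimes K_Y$ near $y$. Evaluating at $y$ and then projecting $E_y\otimes (K_Y)_y$ onto $(E_y/\ell_0)\otimes (K_Y)_y = \ell_1\otimes (K_Y)_y$, using the splitting $E_y = \ell_0 \oplus \ell_1$, defines $\rho(D,y)(v)$. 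To verify independence of the extension, any other extension $\widetilde{v}'$ differs from $\widetilde{v}$ by $z\cdot w$ for some local section $w$ and a local coordinate $z$ vanishing at $y$; then $D(zw) = w\otimes dz + z\cdot D(w)$, and the second summand, evaluated at $y$, equals $\text{Res}(D,y)(w(y))\otimes dz|_y$, giving $D(zw)(y) = [\text{Res}(D,y)+\text{Id}](w(y))\otimes dz|_y$. Since $\text{Res}(D,y)+\text{Id}$ is the identity on $\ell_0$ and zero on $\ell_1$, its image lies in $\ell_0$, and the difference vanishes after projection to $\ell_1\otimes (K_Y)_y$.

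For part (2), I work in a local holomorphic frame $(e_0, e_1)$ of $E$ near $y$ with $e_i(y)$ spanning $\ell_i$, and write $D = d + \Omega$ with $\Omega = (\omega_{ij})$ defined by $D(e_j) = \sum_i \omega_{ij}\, e_i$. The residue hypotheses force $\omega_{00}, \omega_{01}, \omega_{10}$ to be holomorphic near $y$ and $\omega_{11} + dz/z$ to be holomorphic. The construction of $\rho(D,y)$ applied to $e_0$ then yields $\rho(D,y)(e_0(y)) = \omega_{10}(y)\otimes e_1(y)$. Writing $\omega_{10} = \beta(z)\,dz$, we have $\rho(D,y) = 0$ if and only if $\beta(0) = 0$.

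To connect this with the local monodromy I search for holomorphic flat sections $\sigma = a(z)e_0 + b(z)e_1$ near $y$. The flatness equation $D\sigma = 0$ is a Fuchsian first-order system whose indicial equation at $z = 0$ has the two roots $0$ and $-1$. The non-resonant root $-1$ always yields a holomorphic flat section $\sigma_1$ vanishing simply at $y$ with leading coefficient in $\ell_1$, since no obstruction appears in the coefficient recursion. The resonant root $0$ gives a candidate $\sigma_0 = (1 + a_1 z + \cdots)e_0 + (b_1 z + \cdots)e_1$ with $\sigma_0(y) = e_0(y) \in \ell_0$, and matching coefficients in the flatness equations shows that the order-$z$ term of the second equation reduces to $\beta(0) = 0$ — precisely the condition $\rho(D,y) = 0$. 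When this holds, all remaining coefficients are recursively determined, the series converges by standard regular-singular-point theory, and $(\sigma_0, \sigma_1)$ is a holomorphic flat frame near $y$, so the local monodromy is trivial. Conversely, when $\beta(0)\neq 0$, explicit integration of the gauge-equivalent model $d + \begin{pmatrix} 0 & 0 \\ \beta(0)\,dz & -dz/z \end{pmatrix}$ produces a $\log z$ contribution in any flat section through $\ell_0$ and yields local monodromy conjugate to $\begin{pmatrix} 1 & 0 \\ -2\pi i\,\beta(0) & 1 \end{pmatrix}$, which is non-trivial.

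The principal technical difficulty sits in the Fuchsian analysis of the third paragraph: pinpointing $\beta(0)$ as the unique obstruction to the resonant flat section, ensuring convergence of the resulting formal power series, and verifying that the general $D$ is reducible by a holomorphic gauge transformation to the above model (or at least to one with the same monodromy invariant). All of this is classical, but the careful bookkeeping arising from the resonance between the two indicial roots $0$ and $-1$ is the real crux of the argument.
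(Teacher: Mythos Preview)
Your construction in part (1) is essentially identical to the paper's, including the well-definedness check via $D(zw) = w\otimes dz + z\,D(w)$ and the observation that $(\mathrm{Res}(D,y)+\mathrm{Id})$ has image in $\ell_0$.

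For part (2) your argument is correct in substance but takes a genuinely different route. The paper avoids any Frobenius-type power-series analysis: instead it performs the elementary transformation (Hecke modification) of $E$ along $\ell_0$, i.e.\ passes to the subsheaf $F\subset E$ with $F_y = \ell_0$. The induced logarithmic connection $D'$ on $F$ then has \emph{nilpotent} residue at $y$, and a direct check shows this nilpotent residue is exactly $\rho(D,y)$. Since $(E,D)$ and $(F,D')$ agree away from $y$, their local monodromies coincide, and one invokes the standard fact that a logarithmic connection with nilpotent residue has trivial local monodromy if and only if that residue vanishes. This sidesteps the resonance bookkeeping, the convergence of formal solutions, and the gauge-reduction to a model form that you flag as the ``real crux''; the paper's approach trades those analytic steps for one algebraic move. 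Your approach, by contrast, is entirely self-contained and makes the obstruction $\beta(0)$ completely explicit in coordinates.

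One minor slip: your indicial roots are $0$ and $1$, not $0$ and $-1$ (they are the eigenvalues of $-\mathrm{Res}(D,y)$, and indeed a section with exponent $-1$ would have a pole, not vanish simply). This does not affect your obstruction computation, which is correct as written, but you should fix the stated roots and identify the \emph{larger} root $1$ as the non-resonant one.
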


\begin{proof}
To construct the homomorphism $\rho(D, y)$, take any vector $\alpha\, \in\, \ell_0$. Let
$s_\alpha$ be a holomorphic section of $E$, defined on an open neighborhood of $y$, such that $s_\alpha(y)\,=\, \alpha$.
Now consider the locally defined holomorphic section $D(s_\alpha)$ of the vector
bundle $E\otimes K_Y\otimes {\mathcal O}_Y(S')$. Since $\ell_0$ is the eigen-bundle of
the residue $\text{Res}(D,y)$ for the eigen-value zero, it follows that the evaluation
$D(s_\alpha)(y)$, of the section $D(s_\alpha)$ at $y$, vanishes. This implies that
$\text{Res}(D,y)$ is given by a locally defined holomorphic section of the vector bundle
$E\otimes K_Y$ using the natural inclusion of the sheaf $E\otimes K_Y$ in $E\otimes K_Y\otimes {\mathcal O}_Y(S')$.
This section of $E\otimes K_Y$ giving $\text{Res}(D,y)$ will also be denoted
by $\text{Res}(D,y)$. So the evaluation $D(s_\alpha)(y)$ of this section of $E\otimes K_Y$ is an element of
\begin{equation}\label{nds}
(E\otimes K_Y)_y\,=\, (\ell_0\otimes (K_Y)_y)\oplus (\ell_1\otimes (K_Y)_y)\, .
\end{equation}
Let $\widetilde{\alpha}$ be the component of $D(s_\alpha)(y)\, \in\, (E\otimes K_Y)_y$ in the direct summand
$\ell_1\otimes (K_Y)_y$ in \eqref{nds}

We will show that the above element $$\widetilde{\alpha}\,\in\, \ell_1\otimes (K_Y)_y$$ is independent
of the choice of the holomorphic section $s_\alpha$ passing through $\alpha$.

To prove the above independence, first set $\alpha\,=\, 0$. Then from the computation in the proof of
Lemma \ref{lemn1} if follows that $\widetilde{\alpha}\,=\, 0$. From this it follows that for any
general $\alpha$ (not necessarily the zero vector), the element $\widetilde{\alpha}$ is independent
of the choice of the holomorphic section $s_\alpha$ passing through $\alpha$.

Now define
$$
\rho(D, y) \, :\, \ell_0\, \longrightarrow\, \ell_1\otimes (K_Y)_y\, , \ \ \alpha\, \longmapsto\,
\widetilde{\alpha}\, .
$$
We have shown that this map is well-defined.

To prove the second statement of the proposition,
let $F$ be the holomorphic vector bundle on $Y$ of rank two that fits in the following
short exact sequence of sheaves on $Y$
\begin{equation}\label{gn2}
0\, \longrightarrow\, F\, \stackrel{\iota''}{\longrightarrow}\, E\,
\longrightarrow\, E_y/\ell_0\, \longrightarrow\, 0\, .
\end{equation}
Since $\ell_0$ is an eigen-space for ${\rm Res}(D, y)$, the logarithmic connection $D$
on $E$ induces a logarithmic connection on the subsheaf $F$ in \eqref{gn2}. Let $D'$ denote
the logarithmic connection on $F$ induced by $D$.

Consider the homomorphism of
fibers $F_y \, \longrightarrow\, E_y$ given by the homomorphism $\iota''$ of sheaves in
\eqref{gn2}. Since the image of $F_y$ under this homomorphism is contained in the eigen-space of
${\rm Res}(D, y)$ for the eigen-value zero (in fact $\iota''(F_y)\,\subset\, E_y$
is the eigen-space for the eigen-value zero of ${\rm Res}(D, y)$), it follows that the
residue of the logarithmic connection $D'$ at $y$ has only zero as the eigen-value. Indeed, this
is an immediate consequence of the
fact that the eigen-values of the residue ${\rm Res}(D, y)$ give the
eigen-values of the residue ${\rm Res}(D', y)$ of the induced connection $D'$.
Therefore, the residue ${\rm Res}(D', y)$ is nilpotent.

It is now straight-forward to check that ${\rm Res}(D', y)$ is given by the homomorphism
$\rho(D, y)$ in the first statement of the proposition. Hence ${\rm Res}(D', y)\,=\, 0$ if and
only if we have $\rho(D, y)\,=\, 0$.

The local monodromy of $D$ around $y$ evidently coincides with the local monodromy of $D'$ around $y$, because
the two vector bundles with connection, namely, $(E,\, D)$ and $(F,\, D')$, are canonically identified, using
$\iota''$ in \eqref{gn2}, over the complement of the point $y\, \in\, Y$.
On the other hand, the local monodromy of $D'$ around $y$
is trivial if and only if the nilpotent residue ${\rm Res}(D', y)$ actually vanishes (see Remark \ref{rem-res}). This completes the
proof of the proposition.
\end{proof}

\begin{remark}\label{rem-res}
If $D$ is a logarithmic connection on $E$ singular over $y$, then the eigen-values of the local monodromy
of $D$ around the point $y$ are of the form $\exp(2\pi\sqrt{-1}\mu_1),\, \exp(2\pi\sqrt{-1}\mu_2)$, where $\mu_1$ and
$\mu_2$ are the eigen-values of ${\rm Res}(D, y)$. Therefore, if ${\rm Res}(D, y)$ is nonzero nilpotent, then the
local monodromy of $D$ around the point $y$ is conjugate to the matrix
$$
\begin{pmatrix}
1 & 1\\
0 & 1
\end{pmatrix} \, \in\, {\rm SL}(2, {\mathbb C})\, .
$$
\end{remark}

\section{Branched projective structure as logarithmic connection on jet bundle}\label{s4}

Assume that $d\,:=\, \# S\, \not=\, 2g-2$ (see also Assumption \ref{a1}(2)).

Let $Q$ be a holomorphic line bundle on $X$ such that
\begin{equation}\label{dq}
Q^{\otimes 2}\,=\, TX\otimes {\mathcal O}_X(S)
\end{equation}
(same condition as in \eqref{e5}). In particular, we have
\begin{equation}\label{dq2}
\text{degree}(Q)\,=\, \frac{d}{2} -g +1 \, \not=\, 0
\end{equation}
because $d\, \not=\, 2g-2$.

Let $J^1(Q)$ denote the first order jet bundle for $Q$. It fits in the following short
exact sequence of vector bundles on $X$:
\begin{equation}\label{e6}
0\, \longrightarrow\, Q\otimes K_X\, \stackrel{\iota_0}{\longrightarrow}\, J^1(Q)\,
\stackrel{q_0}{\longrightarrow}\, J^0(Q)\,=\, Q\, \longrightarrow\, 0
\end{equation}
(see \eqref{je1}).

For notational convenience, we will often identify $\iota_0(Q\otimes K_X)$ with $Q\otimes K_X$ using $\iota_0$.

From \eqref{dq} and \eqref{e6} it follows that
\begin{equation}\label{det}
\bigwedge\nolimits^2 J^1(Q)\,=\, Q\otimes K_X\otimes Q \,=\, {\mathcal O}_X(S)\, .
\end{equation}

We note that ${\mathcal O}_X(S)$ has a canonical logarithmic connection given by the
de Rham differential. Indeed, the sheaf of sections of ${\mathcal O}_X(S)$ are locally
defined meromorphic functions with pole of order at most one on $S$. For any such function
$f$, the differential $df$ has pole on $S$ of order at most two. 

The canonical logarithmic connection on ${\mathcal O}_X(S)$ given by the
de Rham differential will
be denoted by ${\mathcal D}_S$. The singular locus of ${\mathcal D}_S$ is $S$, and the
residue of ${\mathcal D}_S$ at any $y\, \in\, S$ is $-1$.

Any logarithmic connection on the vector bundle $J^1(Q)$
induces a logarithmic connection on $\bigwedge\nolimits^2 J^1(Q)\,=\, {\mathcal O}_X(S)$
(this isomorphism is in \eqref{det}).

\begin{theorem}\label{thm1}
Giving a branched projective structure on $X$ with branching type $S$ is equivalent to
giving a flat logarithmic connection $D^1$ on $J^1(Q)$ such that
\begin{itemize}
\item $D^1$ is nonsingular over $X\setminus S$,

\item ${\rm trace}({\rm Res}(D^1, x_i))\, =\, -1$ and ${\rm Res}(D^1,x_i)(w)\,=\, -w$
for every $x_i\, \in \, S$ and $w\, \in\, (Q\otimes K_X)_{x_i}$, where
$(Q\otimes K_X)_{x_i}\, \subset\, J^1(Q)_{x_i}$ is the line in \eqref{e6},

\item for every $y\,\in\, S$, the homomorphism $\rho(D^1, y)$ in Proposition \ref{propn1} vanishes, and

\item the logarithmic connection on $\bigwedge\nolimits^2 J^1(Q)\,=\, {\mathcal O}_X(S)$ induced by
$D^1$ coincides with the canonical logarithmic connection ${\mathcal D}_S$.
\end{itemize}
\end{theorem}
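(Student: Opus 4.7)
The plan is to establish the equivalence by translating both sides through an intermediate description of branched projective structures and then identifying the two data sets directly. The starting point is Lemma~\ref{lem1}, which shows that a branched projective structure with branching type $S$ is the same as a triple $(F,\, L,\, D)$ consisting of a rank two holomorphic vector bundle $F$ with $\bigwedge^2 F\,=\, {\mathcal O}_X$, a line subbundle $L\, \subset\, F$ of degree $g-1-d/2$ whose second fundamental form for $D$ has divisor exactly $S$, and a holomorphic connection $D$ on $F$ inducing the trivial connection on $\bigwedge^2 F$. Using Remark~\ref{rem1}, one may normalize so that $F/L\,=\, Q$; combined with $\bigwedge^2 F\,=\, {\mathcal O}_X$ and \eqref{e5b} this forces $L\,=\, Q\otimes K_X\otimes {\mathcal O}_X(-S)$. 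The task is therefore to put such triples $(F,\, L,\, D)$ in natural bijection with the logarithmic connections $D^1$ on $J^1(Q)$ satisfying the four listed conditions.

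For the forward direction, I would construct a canonical ${\mathcal O}_X$--linear injection $\Psi\,:\, F\,\hookrightarrow\, J^1(Q)$ as the composition
$$
\Psi\,=\, J^1(q)\circ \sigma_D\,,
$$
where $\sigma_D\,:\, F\,\longrightarrow\, J^1(F)$ is the splitting of the first jet sequence determined by $D$, and $J^1(q)$ is induced by the projection $q\,:\, F\,\longrightarrow\, Q$. Unwinding definitions shows that $\Psi$ restricts to $L$ as $-\iota_0\circ {\mathcal F}(L,\, D)\,:\, L\,\longrightarrow\, Q\otimes K_X\,\hookrightarrow\, J^1(Q)$ and induces the identity on the quotient $Q$; since ${\mathcal F}(L,\, D)$ has divisor $S$, this makes $\Psi$ an isomorphism over $X\setminus S$ with cokernel a length-$d$ skyscraper supported on $S$. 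The connection $D$ then transports via $\Psi$ to a connection on $J^1(Q)\vert_{X\setminus S}$, and a local computation at each $y\,\in\, S$, modeled on the rank one situation of Lemma~\ref{lemn1}, shows that this extends to a logarithmic connection $D^1$ on $J^1(Q)$ whose residue at $y$ has eigen-values $\{-1,\, 0\}$ with $(-1)$-eigenline $\iota_0((Q\otimes K_X)_y)$. The vanishing of $\rho(D^1, y)$ then follows from Proposition~\ref{propn1}(2), because the local monodromy of $D^1$ around $y$ agrees, through $\Psi$, with that of $D$, which is trivial since $D$ is holomorphic. Finally, $\bigwedge^2\Psi\,:\, {\mathcal O}_X\, \hookrightarrow\, {\mathcal O}_X(S)$ is, up to a nonzero scalar, the canonical section with divisor $S$, so the connection induced by $D^1$ on $\bigwedge^2 J^1(Q)\,=\, {\mathcal O}_X(S)$ is precisely the image of ${\mathcal D}_0$ under this inclusion, namely ${\mathcal D}_S$.

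For the reverse direction, given $D^1$ satisfying the four conditions, I would perform the elementary (Hecke) modification
$$
F\,:=\, \ker\!\Big(J^1(Q)\,\longrightarrow\,\bigoplus_{y\in S} J^1(Q)_y/\ell_0^y\Big)\,\subset\, J^1(Q)\,,
$$
where $\ell_0^y$ denotes the $0$-eigenline of ${\rm Res}(D^1, y)$. The key local calculation, performed in a trivialization of $J^1(Q)$ near $y$ adapted to the eigen-decomposition of ${\rm Res}(D^1, y)$, shows that $D^1$ sends the subsheaf $F$ into $F\otimes K_X$, so that $D\,:=\, D^1\vert_F$ is a genuine holomorphic connection, and the residual first-order obstruction to holomorphicity at $y$ is exactly the homomorphism $\rho(D^1, y)$, which vanishes by hypothesis. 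Setting $L\,:=\, F\cap \iota_0(Q\otimes K_X)$ produces a line subsheaf with $F/L\,=\, Q$ and $L\,=\, Q\otimes K_X\otimes {\mathcal O}_X(-S)$, and the second fundamental form ${\mathcal F}(L,\, D)$, read as a section of ${\mathcal O}_X(S)$, has divisor $S$ by construction. The fourth condition on $D^1$ translates into the statement that $D$ induces ${\mathcal D}_0$ on $\bigwedge^2 F\,=\, {\mathcal O}_X$, so Lemma~\ref{lem1} provides the desired branched projective structure. That the two constructions are mutually inverse is then a direct verification on stalks at points of $S$.

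The main technical obstacle is the local analysis at each $y\, \in\, S$: one must check that the prescribed residue data together with $\rho(D^1, y)\,=\, 0$ correspond precisely to holomorphicity of $D^1\vert_F$ on the Hecke transform, and symmetrically that the transport of $D$ along $\Psi$ produces exactly the prescribed residue structure. Both calculations reduce, after a coordinate choice and adapted trivialization, to comparing connection matrices of the form $M_0 + t M_1 + O(t^2)$ with $M_0\,=\, {\rm diag}(-1,\, 0)$; the vanishing of the off-diagonal $(1,2)$ entry of $M_1$ is then seen to be exactly the condition $\rho(D^1, y)\,=\, 0$ of Proposition~\ref{propn1}, and this single identification is what links the two descriptions.
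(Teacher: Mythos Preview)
Your overall architecture matches the paper's proof: both directions are carried out via the same Hecke modification between $F$ and $J^1(Q)$, and your map $\Psi = J^1(q)\circ\sigma_D$ is exactly the paper's $\varphi$ (defined there by extending $v$ to a flat section, projecting to $Q$, and taking the $1$-jet). The forward direction and the passage from $D^1$ to a holomorphic connection on the Hecke transform $F$ are handled the way the paper does it.

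There is, however, one genuine gap in your reverse direction. You assert that ``the second fundamental form ${\mathcal F}(L,\, D)$, read as a section of ${\mathcal O}_X(S)$, has divisor $S$ by construction.'' This is not automatic. What the construction gives you is only that ${\mathcal F}(L,\, D)\in H^0(X,\,\text{Hom}(L,\,Q\otimes K_X))\cong H^0(X,\,{\mathcal O}_X(S))$; nothing yet prevents it from vanishing identically, or from having zeros outside $S$. The paper isolates this point as \textbf{Claim A}: one must show that $D^1$ does \emph{not} preserve the subbundle $\iota_0(Q\otimes K_X)\subset J^1(Q)$. If it did, $D^1$ would induce a logarithmic connection on $Q$ whose residues all vanish (since the $0$-eigenline surjects onto $Q_y$), hence a genuine holomorphic connection on $Q$, forcing $\text{degree}(Q)=0$ --- contradicting \eqref{dq2}, which is exactly where the standing hypothesis $d\neq 2g-2$ enters. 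Once you know $D^1$ does not preserve $Q\otimes K_X$, its second fundamental form $\eta\in H^0(X,\,{\mathcal O}_X(S))$ is nonzero, vanishes at every $y\in S$ (because $(Q\otimes K_X)_y$ is an eigenline of the residue), and hence has divisor exactly $S$ for degree reasons; transferring through the identification over $X\setminus S$ then gives that ${\mathcal F}(L,\, D)$ has divisor $S$. Without this step your triple $(F,\,L,\,D)$ need not satisfy condition (3) of Lemma~\ref{lem1}, and the reverse construction does not land in branched projective structures.
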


\begin{proof}
Let $P_1$ be a branched projective structure on $X$ with branching type $S$. In view of Lemma
\ref{lem1} and Remark \ref{rem1}, this $P_1$ gives a triple $(F,\, L,\, D)$ satisfying the
conditions in Lemma \ref{lem1} such that $F/L\, =\, Q$. Let
\begin{equation}\label{qp}
q'\, :\, F\, \longrightarrow\, Q\,=\, F/L
\end{equation}
be the quotient homomorphism.

Using the flatness of $D$ (see Remark \ref{flat}) we will construct a homomorphism
\begin{equation}\label{vp}
\varphi\, :\, F\, \longrightarrow\, J^1(Q)\, .
\end{equation}
For this, take any $x\, \in\, X$ and $v\, \in\, F_x$. Let $\widehat{v}$ be the unique flat section 
of $(F,\, D)$, defined on a simply connected neighborhood of $x$, such that $\widehat{v}(x)\,=\, 
v$. Therefore, $q'(\widehat{v})$ is a holomorphic section of $Q$ defined around $x$, where $q'$ 
is the projection in \eqref{qp}. Now restricting $q'(\widehat{v})$ to the first order 
neighborhood of $x$ we get an element $q'(\widehat{v})'\,\in\, J^1(Q)_x$. The homomorphism 
$\varphi$ in \eqref{vp} is defined by
$$
\varphi(v)\, =\, q'(\widehat{v})'\, .
$$

We will show that $\varphi$ constructed in \eqref{vp} is an isomorphism over the complement 
$X\setminus S$.

For this, take any $x\, \in\, X$ and
\begin{equation}\label{v}
v\, \in\, F_x
\end{equation}
such that $\varphi (v)\,=\, 0$.
Now consider the commutative diagram
\begin{equation}\label{cd}
\begin{matrix}
F & \stackrel{\varphi}{\longrightarrow} & J^1(Q)\\
~ \Big\downarrow q' && ~ \Big\downarrow q_0\\
Q & \stackrel{=}{\longrightarrow} & Q
\end{matrix}
\end{equation}
where $q'$ and $q_0$ are the homomorphisms in \eqref{qp} and \eqref{e6} respectively. Note that
both $q'$ and $q_0$ are surjective. Therefore, we conclude that $v$ in \eqref{v} satisfies the equation
$$
v\, \in\, \text{kernel}(q'(x))\,=\, L_x\, \subset\, F_x\, .
$$

{}From the commutative diagram in \eqref{cd} it follows that
$$
\varphi(L)\, \subset\, \text{kernel}(q_0)\,=\, Q\otimes K_X\, .
$$
{}From the construction of the second fundamental form ${\mathcal F}(L, D)$ in \eqref{sf}
it follows immediately that ${\mathcal F}(L, D)$ coincides with
$$
\varphi\vert_L\, :\, L\, \longrightarrow\, Q\otimes K_X\,=\, (F/L)\otimes K_X\, .
$$
On the other hand, the divisor for ${\mathcal F}(L, D)$ is $S$, so ${\mathcal F}(L, D)$
does not vanish on the complement $X\setminus S$. Therefore, we conclude that the element $x$ in
\eqref{v} lies in $S$, and $v\, \in\, L_x$. Conversely, for any $x\,\in\, S$, we have
$\varphi(x)(L_x)\,=\, 0$, because ${\mathcal F}(L, D) (x) \,=\,0$ and ${\mathcal F}(L, D) (x)\,=\,
\varphi\vert_L (x)$.

For each $y\, \in\, S$, let $$\ell^0_y\,:=\, \varphi(y)(F_y)\,=\,
\text{image}(\varphi(y))\, \subset\, J^1(Q)_y$$
be the line. From the commutative diagram in \eqref{cd} it follows that $q_0(y)(\ell^0_y)\,=\,
Q_y$. Hence we have a direct sum decomposition
\begin{equation}\label{d2}
J^1(Q)_y\,=\, \ell^0_y\oplus (Q\otimes K_X)_y\,=\, \ell^0_y\oplus \text{kernel}(q_0)_y
\end{equation}
of the fiber $J^1(Q)_y$.

The vector bundle $F$ and $J^1(Q)$ are evidently related by the following short exact sequence
of sheaves:
\begin{equation}\label{e12}
0\, \longrightarrow\, F\, \stackrel{\varphi}{\longrightarrow}\, J^1(Q) \, \longrightarrow\,
\bigoplus_{y\in S} J^1(Q)_y/\ell^0_y \, \longrightarrow\,0\, .
\end{equation}
Note that from \eqref{det} we have
$$
\text{degree}(J^1(Q)) - \text{degree}(F)\,=\, \text{degree}(J^1(Q))\,=\, 
\#S \,=\, d\, ;
$$
(recall from Lemma \ref{lem1}(1) that $\bigwedge^2 F\,=\, {\mathcal O}_X$).
Hence $\text{degree}(J^1(Q)/\varphi(F))\,=\, d$, which also follows from \eqref{e12}.

The holomorphic connection $D$ on the subsheaf $\varphi(F)\, \subset\, J^1(Q)$ extends to a 
logarithmic connection on $J^1(Q)$. Indeed, $D$ induces a logarithmic connection on $F\otimes 
{\mathcal O}_X(S)$, which we denote by $D^2$. The singular locus of $D^2$ is $S$, and for each 
$y\, \in\, S$, the residue of $D^2$ at $y$ is $-\text{Id}_{F_y}$. Indeed, this follows immediately
from the fact that the residue of the logarithmic connection on ${\mathcal O}_X(S)$ given by the
de Rham differential is $-1$ at each point of $S$. On the other hand, from 
\eqref{e12} it follows, by taking the duals, that
$$
F^*\, \supset\, J^1(Q)^*\, \supset \, F^*\otimes {\mathcal O}_X(-S)\, ,
$$
because $J^1(Q)\otimes {\mathcal O}_X(-S)\, \subset\, F$. Now again taking dual, we have
\begin{equation}\label{in2}
J^1(Q)\, \subset\, F\otimes {\mathcal O}_X(S)\, .
\end{equation}
The above logarithmic 
connection $D^2$ on $F\otimes {\mathcal O}_X(S)$ preserves the subsheaf $J^1(Q)$
in \eqref{in2}. Indeed, this follows from the fact that the residue of $D^2$ at any
$y\,\in\, S$ is $-\text{Id}_{F_y}$. Hence $D^2$ 
induces a logarithmic connection on $J^1(Q)$. The logarithmic connection on $J^1(Q)$ induced by 
$D^2$ will be denoted by $D^1$. The singular locus of $D^1$ coincides with that of
$D^2$, namely the subset $S$.

{}From the earlier observation that $\text{Res}(D^2,y)\,=\, -\text{Id}_{F_y}$ for $y\, \in\, S$,
and the above construction of $D^1$, it follows that the residue
$${\rm Res}(D^1, y)\, \in\, \text{End}(J^1(Q)_y)$$ of $D^1$ at $y$ preserves
the decomposition in \eqref{d2} for every $y\, \in\, S$. Moreover, we have
$$
{\rm Res}(D^1, y)(w_1,\, w_2)\,=\, - w_2\, ,\ \ \forall\ 
w_1 \,\in\, \ell^0_y\, ,\ w_2\,\in\, (Q\otimes K_X)_y
$$
for each $y\, \in\, S$. Note that the image of $\ell^0_y$ in the fiber
$(F\otimes {\mathcal O}_X(S))_y$ by the inclusion map of sheaves in \eqref{in2} vanishes
(this is because the image of $F_y$ in $(F\otimes {\mathcal O}_X(S))_y$ vanishes).

Since $D$ is a regular holomorphic connection, it does not have any nontrivial local monodromy.
Therefore, for any point $y\, \in\, S$, the local monodromy of $D^1$ around $y$ is trivial. Indeed,
the two vector bundles with holomorphic connections, namely $(F,\, D)$ and $(J^1(Q),\, D^1)$, are canonically
identified over the complement $X\setminus S$
using $\varphi$ in \eqref{e12}. Hence from the second statement in Proposition \ref{propn1}
it follows that the homomorphism $\rho(D^1, y)$ vanishes for all $y\, \in\, S$.

From the exact sequence in \eqref{e12} it follows immediately that the homomorphism of
second exterior powers induced by $\varphi$
$$
\bigwedge\nolimits^2\varphi\, :\, 
{\mathcal O}_X\,=\, \bigwedge\nolimits^2 F\, \longrightarrow\, \bigwedge\nolimits^2 J^1(Q)\,=\,
{\mathcal O}_X(S)
$$
(see \eqref{det} for the last isomorphism) coincides with the natural inclusion of
${\mathcal O}_X$ in ${\mathcal O}_X(S)$. Since $D$ induces the canonical connection ${\mathcal D}_0$
on $\bigwedge\nolimits^2 F\,=\, {\mathcal O}_X$ given by the de Rham differential (see Lemma \ref{lem1}(4)), the
logarithmic connection on $\bigwedge\nolimits^2 J^1(Q)\,=\,
{\mathcal O}_X(S)$ induced by $D^1$ also coincides with the one given by the de Rham
differential (in other words, it is
the logarithmic connection ${\mathcal D}_S$). Therefore, $D^1$ satisfies all the
three conditions in the statement of the theorem.

To prove the converse, let $D^1$ be a logarithmic connection on the vector bundle $J^1(Q)$, singular over
$S$, such that for every $y\, \in\, S$, we have
$${\rm trace}({\rm Res}(D^1, y))\, =\, -1\ \ \text{ and }\ \ {\rm Res}(D^1,y)(w)\,=\, -w$$
for all $w\, \in\, (Q\otimes K_X)_y$, the logarithmic connection on $\bigwedge^2 J^1(Q)
\,=\, {\mathcal O}_X(S)$ induced by $D^1$ coincides with the tautological logarithmic connection
${\mathcal D}_S$, and the homomorphism $\rho(D^1, y)$ in Proposition \ref{propn1} vanishes
for every $y\,\in\, S$.

Therefore, the eigen-values of ${\rm Res}(D^1, y)$ are $-1$ and $0$ for all $y\, \in\, S$. Denote
$$
\ell'_y\, :=\, \text{kernel}({\rm Res}(D^1, y))\, \subset\, J^1(Q)_y\, ,
$$
the eigen-space for $0$. As done in \eqref{gn2},
let $F'$ be the holomorphic vector bundle on $X$ of rank two that fits in the following
short exact sequence of sheaves on $X$
\begin{equation}\label{g2}
0\, \longrightarrow\, F'\, \stackrel{\iota}{\longrightarrow}\, J^1(Q)\,
\longrightarrow\, \bigoplus_{y\in S} J^1(Q)_y/\ell'_y\, \longrightarrow\, 0\, .
\end{equation}
From \eqref{g2} it follows immediately that
\begin{equation}\label{dz}
\bigwedge\nolimits^2 F'\,=\, (\bigwedge\nolimits^2 J^1(Q))\otimes {\mathcal O}_X(-S)\,=\,
{\mathcal O}_X(S)\otimes {\mathcal O}_X(-S)\,=\, {\mathcal O}_X
\end{equation}
(see \eqref{det}).

Repeating the argument in the 
proof of the second statement of Proposition \ref{propn1} we conclude that the connection
$D'$ on the vector bundle $F'$ induced from $D^1$ is in fact a regular holomorphic connection.

Since $\iota$ in \eqref{g2} is an isomorphism over $X\setminus S$, the line subbundle
$Q\otimes K_X$ of $J^1(Q)$ in \eqref{e6} produces a line subbundle of $F'\vert_{X\setminus S}$.
Let
\begin{equation}\label{lp}
L'\, \subset\, F'
\end{equation}
be the line subbundle over $X$ generated by this
line subbundle of $F'\vert_{X\setminus S}$. We note that this $L'$ is uniquely determined by the
condition that a locally defined holomorphic section $s$ of $F'$ is a section of $L'$ if and only
if $\iota(s)$ is a section of the subbundle $Q\otimes K_X$ of
$J^1(Q)$. From this it follows that $\iota$ induces a homomorphism
\begin{equation}\label{itp}
\iota'\, :\, F'/L'\, \longrightarrow\, J^1(Q)/(Q\otimes K_X)\,=\, Q\, .
\end{equation}
For each point $y\, \in\, S$, the composition
$$
\iota'(F'_y)\,=\, \ell'_y \, \hookrightarrow\, J^1(Q)_y\, \longrightarrow\,
J^1(Q)_y/(Q\otimes K_X)_y\,=\, Q_y
$$
is surjective, and hence from the exact sequence in \eqref{g2} it follows that the
homomorphism $\iota'$ in \eqref{itp} is an isomorphism. Consequently,
we have
\begin{equation}\label{it2}
-\text{degree}(L')\,=\, \text{degree}(F'/L') \,=\, \text{degree}(Q)\,=\,
\frac{d}{2} -g +1
\end{equation}
(see \eqref{dq2} and \eqref{dz} for the above equalities).

\medskip
\textbf{Claim\, A.}\, We will show that the logarithmic connection $D^1$ on $J^1(Q)$ does not preserve
the subbundle $Q\otimes K_X$ in \eqref{e6}.
\medskip

To prove the claim by contradiction, assume that $D^1$ preserves $Q\otimes K_X$. Then $D^1$
induces a logarithmic connection on the quotient bundle $J^1(Q)/(Q\otimes K_X)\,=\, Q$ in
\eqref{e6}. Let $\widehat{D}^1$ be this induced logarithmic connection on $Q$. The
residue of $\widehat{D}^1$ at any $x_i\, \in\, S$ is induced by the residue of $D^1$ at $x_i$. Note that
for each $x_i\, \in\, S$, the endomorphism of $Q_{x_i}$ induced by the residue 
${\rm Res}(D^1, x_i)\,\in\, {\rm End}(J^1(Q)_{x_i})$ is the zero map. Consequently, all the residues of the logarithmic connection
$\widehat{D}^1$ vanish. Therefore, $\widehat{D}^1$ is in fact a regular holomorphic connection on $Q$. This
implies that we have $$\text{degree}(Q)\,=\, 0\, ,$$ because holomorphic
connections on $X$ are flat (Remark \ref{flat}). But this contradicts \eqref{dq2}. Hence
we conclude that $D^1$ does not preserve the subbundle $Q\otimes K_X\,\subset\,
J^1(Q)$. This proves Claim A.

Let
\begin{equation}\label{deta}
\eta\, :\, Q\otimes K_X\, \longrightarrow\, Q\otimes K_X\otimes {\mathcal O}_X(S)
\end{equation}
be the homomorphism given by the composition
$$
Q\otimes K_X\, \stackrel{\iota_0}{\hookrightarrow}\, J^1(Q)\, \stackrel{D^1}{\longrightarrow}\,
J^1(Q)\otimes K_X\otimes {\mathcal O}_X(S) \, \stackrel{q_0\otimes {\rm Id}}{\longrightarrow}\, 
Q\otimes K_X\otimes {\mathcal O}_X(S)\, ,
$$
where $\iota_0$ and $q_0$ are the homomorphisms in \eqref{e6},
while ${\rm Id}$ stands for the identity map of the line bundle
$K_X\otimes {\mathcal O}_X(S)$. From the Leibniz identity for $D^1$ it follows immediately that
this $\eta$ is in fact ${\mathcal O}_X$--linear. Therefore, $\eta$ is a holomorphic section of ${\mathcal
O}_X(S)$. Note that this is the second fundamental form of the subbundle $Q\otimes K_X$ for the
logarithmic connection $D^1$.

Since $D^1$ does not preserve the subbundle $Q\otimes K_X$ (Claim A),
the above section $\eta$ of ${\mathcal O}_X(S)$
does not vanish identically. On the other hand, $\eta$ vanishes at every $y\, \in\, S$, because
$(Q\otimes K_X)_y$ is an eigen-space for ${\rm Res}(D^1, y)$. Therefore, we conclude that
the section $\eta$ of ${\mathcal O}_X(S)$ does not vanish at any point of $X\setminus S$.

Since we have a canonical identification of triples
\begin{equation}\label{idf}
(J^1(Q),\, K_X\otimes {\mathcal O}_X(S),\, D^1)\vert_{X\setminus S}\,=\,
(F',\, L',\, D')\vert_{X\setminus S}\, ,
\end{equation}
where $L'$ is the line subbundle constructed in \eqref{lp}, it can be
deduced that
the second fundamental form of $L'$ for the connection $D'$
$${\mathcal F}(L', D')\, \in\, H^0(X,\, \text{Hom}(L',\, (F'/L')\otimes K_X))
\,=\, H^0(X,\, \text{Hom}(L',\, Q\otimes K_X))$$
does not vanish at any point of $X\setminus S$ (the isomorphism $F'/L' \,=\, Q$ used
above is constructed in \eqref{itp}). Indeed, $\eta\vert_{X\setminus S}\,=\,
{\mathcal F}(L', D')_{X\setminus S}$ (constructed in \eqref{deta})
using the identification in \eqref{idf} (recall that $\eta$ is the second
fundamental form of the subbundle $Q\otimes K_X$ for $D^1$). It
was observed above that $\eta$ does not vanish on $X\setminus S$; therefore,
${\mathcal F}(L', D')$ does not vanish at any point of $X\setminus S$.
However, from the constructions of
$L'$ and $D'$ it follows that ${\mathcal F}(L', D')$ vanishes on $S$.

On the other hand, from \eqref{it2} it follows that
$\text{degree}(\text{Hom}(L',\, Q\otimes K_X))\,=\, d$. Therefore, the divisor
for the section ${\mathcal F}(L', D')$ is exactly $S$ with multiplicity
one. In view of Lemma \ref{lem1},
the triple $(F',\, L',\, D')$ produces a branched projective structure on $X$ with branching
divisor $S$. This completes the proof of the theorem.
\end{proof}

\begin{remark}\label{remnu} While Theorem \ref{thm1} produces a logarithmic connection on 
$J^1(Q)$, satisfying four conditions, when we are given a branched projective structure on 
$X$ with branching divisor $S$, there are many logarithmic connection on $J^1(Q)$, satisfying 
the four conditions, that produce the same branched projective structure on 
$X$. Indeed, if two logarithmic connections on $J^1(Q)$, satisfying
the four conditions, differ by a holomorphic automorphism of $J^1(Q)$, then they
produce the same branched projective structure. On the other hand, given a branched
projective structure on $X$ with branching divisor $S$, the logarithmic connection on
$J^1(Q)$ given by Theorem \ref{thm1} is clearly unique.
\end{remark}

\section{Logarithmic connections on jet bundle and quadratic forms with simple poles}\label{s5}

\subsection{Existence of logarithmic connection on the jet bundle}

Take $Q$ satisfying \eqref{dq}. Consider the jet bundle $J^1(Q)$ in \eqref{e6}.
For every $x_i\, \in\, S$, fix a complex line
$$
\ell_i\, \subset\, J^1(Q)_{x_i}
$$
different from the line $(Q\otimes K_X)_{x_i}\, \subset\, J^1(Q)_{x_i}$ in \eqref{e6};
as before, we identify $\iota_0(Q\otimes K_X)$ with $Q\otimes K_X$ using $\iota_0$. Therefore, we have
\begin{equation}\label{e7}
J^1(Q)_{x_i}\,=\, \ell_i\oplus (Q\otimes K_X)_{x_i}\, .
\end{equation}
Let
\begin{equation}\label{e8}
A_i\, :\, J^1(Q)_{x_i}\,\longrightarrow\, J^1(Q)_{x_i}\, ,\ \ (v_1,\, v_2)\, \longmapsto\,
(0,\, v_2)
\end{equation}
be the projection to the direct summand $(Q\otimes K_X)_{x_i}\, \subset\, J^1(Q)_{x_i}$ in \eqref{e7}.

\begin{proposition}\label{prop1}
There is a logarithmic connection $D$ on $J^1(Q)$, nonsingular over $X\setminus S$,
which satisfies the residue condition ${\rm Res}(D,x_i)\,=\, -A_i$ at each $x_i\, \in \, S$,
where $A_i$ is defined in \eqref{e8}.
\end{proposition}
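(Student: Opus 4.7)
The plan is to apply a classical existence criterion for logarithmic connections with prescribed residues on a compact Riemann surface. We recall that on such a surface $X$, a holomorphic vector bundle $V$ admits a logarithmic connection with singular support in a reduced effective divisor $S = \sum_{i=1}^{d} x_i$ and residues $R_i \in \text{End}(V_{x_i})$, provided the necessary identity
\[
\text{degree}(V) + \sum_{i=1}^{d} \text{trace}(R_i) \;=\; 0
\]
holds; this is the logarithmic analogue of Weil's theorem on the existence of holomorphic connections, and follows from a Dolbeault-cohomological analysis of the relevant Atiyah sequence for log connections.

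For $V = J^1(Q)$ and $R_i = -A_i$ the verification is immediate. From \eqref{det}, $\bigwedge^2 J^1(Q) = \mathcal{O}_X(S)$, so $\text{degree}(J^1(Q)) = d$. Each $A_i$ defined in \eqref{e8} is the projection onto the one-dimensional summand $(Q \otimes K_X)_{x_i}$ in \eqref{e7}, hence $\text{trace}(A_i) = 1$. Therefore
\[
\text{degree}(J^1(Q)) + \sum_{i=1}^{d} \text{trace}(-A_i) \;=\; d + d \cdot (-1) \;=\; 0,
\]
and the criterion yields the desired $D$.

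For a self-contained argument, one can construct $D$ directly as follows. Around each $x_i$, choose a local holomorphic coordinate $z_i$ with $z_i(x_i) = 0$ and a local holomorphic frame for $J^1(Q)$ adapted to \eqref{e7}; in this frame, the operator $d - A_i \cdot dz_i / z_i$ is an explicit local logarithmic connection with residue $-A_i$. Using a $C^\infty$ partition of unity subordinate to these local neighborhoods together with $X \setminus S$ (on which one fixes an auxiliary smooth connection), patch these to obtain a smooth logarithmic connection $D_\infty$ with the correct residues at each $x_i \in S$. The deviation of the $(0,1)$-part of $D_\infty$ from the Dolbeault operator $\bar\partial$ of $J^1(Q)$ is a smooth $\text{End}(J^1(Q))$-valued $(0,1)$-form whose Dolbeault class in $H^1(X, \text{End}(J^1(Q)) \otimes K_X)$ is the obstruction to adjusting $D_\infty$ to a holomorphic logarithmic connection. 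The main obstacle is to verify this obstruction class vanishes: by Serre duality it pairs against $H^0(X, \text{End}(J^1(Q)))$, and the pairing against the identity endomorphism recovers precisely the degree--trace identity checked above.
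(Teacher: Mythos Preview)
Your argument has a genuine gap. The ``classical existence criterion'' you invoke is not correct as stated: the condition $\text{degree}(V)+\sum_i\text{trace}(R_i)=0$ is necessary but not sufficient for the existence of a logarithmic connection with prescribed residues $R_i$. Already for $S=\emptyset$ this would say that any vector bundle of degree zero carries a holomorphic connection, contradicting Weil's theorem (take $V=L_1\oplus L_2$ with $\text{degree}(L_1)=-\text{degree}(L_2)\neq 0$). In your final paragraph you correctly identify the obstruction as a class in $H^1(X,\text{End}(J^1(Q))\otimes K_X)$ and pair it, via Serre duality, against $H^0(X,\text{End}(J^1(Q)))$; but you then only check the pairing against $\text{Id}_{J^1(Q)}$. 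That is the degree--trace identity, and it is not enough.

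The point you are missing is that $J^1(Q)$ has global endomorphisms other than scalars. The jet sequence \eqref{e6} does not split (here the hypothesis $d\neq 2g-2$, equivalently $\text{degree}(Q)\neq 0$, is used), so $J^1(Q)$ is indecomposable, and every $\tau\in H^0(X,\text{End}(J^1(Q)))$ has the form $c\cdot\text{Id}+N$ with $N$ nilpotent, $N(J^1(Q))\subset \iota_0(Q\otimes K_X)$ and $N(\iota_0(Q\otimes K_X))=0$. One must then verify that the obstruction pairs to zero against each such $N$; this uses the specific shape of the prescribed residues, namely that $A_i$ is the projection onto $(Q\otimes K_X)_{x_i}$, so that $\text{trace}(A_i\circ N(x_i))=0$. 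This is exactly what the paper does. Your proposal omits this step entirely, and without it the argument does not go through.
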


\begin{proof}
We follow closely the proof Proposition 4.1 in \cite[p.~86]{BDP}. It should be clarified that
Proposition 4.1 in \cite{BDP} is not directly applicable in our
present set-up because the residues $A_i$
do not satisfy the crucial rigidity condition in \cite[Proposition 4.1]{BDP}.

Once we have fixed the residues to be $-A_i$, there is a short exact sequence of
vector bundles on $X$
\begin{equation}\label{e9}
0\, \longrightarrow\, \text{End}(J^1(Q))\otimes{\mathcal O}_X(-S)\, \longrightarrow\, {\mathcal V}\,
\longrightarrow\, TX\otimes {\mathcal O}_X(-S)\, \longrightarrow\, 0
\end{equation}
such that there is a logarithmic connection on $J^1(Q)$ satisfying the conditions in the
statement of the proposition if and only if the short exact sequence in \eqref{e9} splits
holomorphically (see \cite[p.~81, (2.7)]{BDP} and \cite[p.~81, Lemma 2.2]{BDP} for the
construction of $\mathcal V$ and this property of it). So we need
to show that the exact sequence in \eqref{e9} splits holomorphically.

Let
$$
\phi_Q\, \in\, H^1(X,\, \text{Hom}(TX\otimes {\mathcal O}_X(-S),\, \text{End}(J^1(Q))\otimes{\mathcal O}_X(-S)))
\,=\, H^1(X,\, \text{End}(J^1(Q))\otimes K_X)
$$
be the extension class for the short exact sequence in \eqref{e9}. Consider Serre duality
$$
H^1(X,\, \text{End}(J^1(Q))\otimes K_X)\,=\, H^0(X,\, \text{End}(J^1(Q))^*)^*\,=\,
H^0(X,\, \text{End}(J^1(Q)))^*\, ;
$$
note that $\text{End}(J^1(Q))^*\,=\, \text{End}(J^1(Q))$. Let
\begin{equation}\label{e10}
\widetilde{\phi}_Q\, \in\, H^0(X,\, \text{End}(J^1(Q)))^*
\end{equation}
be the image, under this isomorphism, of the above extension class $\phi_Q$ (see \cite[p.~83, (3.7)]{BDP}).

Take any
$$
\tau \, \in\, H^0(X,\, \text{End}(J^1(Q)))\, .
$$
We will show that
\begin{equation}\label{e11}
\tau\, =\, c\cdot\text{Id}_{J^1(Q)}+ N\, ,
\end{equation}
where $c\, \in\, \mathbb C$, and $N$ is a nilpotent endomorphism of $J^1(Q)$ with
$$N(J^1(Q))\, \subset\, Q\otimes K_X\ \ \text{ and } \ \ N(Q\otimes K_X)\,=\,0\, ,$$
where $Q\otimes K_X\, \subset\, J^1(Q)$ is the subbundle in \eqref{e6}.

To prove \eqref{e11}, we will first show that $J^1(Q)$ does not decompose
holomorphically into a direct sum of two holomorphic line bundles. To prove the
indecomposability of $J^1(Q)$ by contradiction, assume that
$$
J^1(Q)\,=\, L_1\oplus L_2\, ,
$$
where $L_1$ and $L_2$ are holomorphic line bundles
with $\text{degree}(L_1) \, \geq\, \text{degree}(L_2)$. If
$$\text{degree}(L_1) \, > \,\text{degree}(Q)\, ,$$ then the composition homomorphism
$$
L_1 \, \hookrightarrow\, J^1(Q) \, \stackrel{q_0}{\longrightarrow}\, Q
$$
is zero, where $q_0$ is the projection in \eqref{e6}. In that case, the
composition
$$
L_2 \, \hookrightarrow\, J^1(Q) \, \stackrel{q_0}{\longrightarrow}\, Q
$$
must be an isomorphism, and consequently the short exact sequence in \eqref{e6} would
split holomorphically.

If, on the other hand, $\text{degree}(L_1) \, \leq \,\text{degree}(Q)$,
consider the inequality
$$
2\cdot \text{degree}(Q)\, \leq\, 2\cdot \text{degree}(Q)+\text{degree}(K_X)\,=\,
\text{degree}(J^1(Q))\,=\, \text{degree}(L_1) + \text{degree}(L_2)
$$
(recall that $g\, \geq\, 1$).
Since $\text{degree}(L_1) \, \geq\, \text{degree}(L_2)$, this implies that
$$
\text{degree}(L_1) \, = \, \text{degree}(Q)\,=\, \text{degree}(L_2)\,=\,
\text{degree}(Q\otimes K_X)\, .
$$
Now take $i\, \in\, \{1,\, 2\}$ such that the composition
$$
L_i \, \hookrightarrow\, J^1(Q) \, \stackrel{q_0}{\longrightarrow}\, Q
$$
is nonzero. Since $\text{degree}(L_i) \, = \, \text{degree}(Q)$, this composition
is an isomorphism, because it is nonzero. So if $\text{degree}(L_1) \, \leq \,\text{degree}(Q)$ we
again conclude that the short exact sequence in \eqref{e6}
splits holomorphically.

Hence the short exact sequence in \eqref{e6} splits holomorphically. But this means that $Q$ 
admits a holomorphic connection, because the homomorphism $J^1(Q)\, \longrightarrow\, Q\otimes 
K_X$ given by such a splitting defines a holomorphic differential operator of order one (see 
Section \ref{sdo}) that is indeed a holomorphic connection on $Q$. This in turn implies that 
$\text{degree}(Q)\,=\, 0$ (Remark \ref{flat}). But this contradicts \eqref{dq2}.

Therefore, we conclude that the vector bundle $J^1(Q)$ does not decompose into a direct
sum of two holomorphic line bundles. This implies that $\tau$ in \eqref{e11} is of the
form
$$
\tau \, =\, c\cdot\text{Id}_{J^1(Q)}+ N\, ,
$$
where $c\, \in\, \mathbb C$ and $N$ is nilpotent \cite[p.~201, Proposition 15]{At}.
Since $N$ is nilpotent, to prove \eqref{e11} it suffices to show that
$$N(Q\otimes K_X)\,\subset\, \,Q\otimes K_X\, ,$$
where $Q\otimes K_X\, \subset\, J^1(Q)$ is the subbundle in \eqref{e6}. If
$N(Q\otimes K_X)\,\subsetneq\, Q\otimes K_X$, then the composition of sheaf
homomorphisms
$$
Q\otimes K_X\,\stackrel{N}{\hookrightarrow} \, J^1(Q) \stackrel{q_0}{\longrightarrow}\, Q
$$
is nonzero, where $q_0$ is the projection in \eqref{e6}.
But this implies that this composition is an isomorphism because
$\text{degree}(Q\otimes K_X)\, \geq\, \text{degree}(Q)$ (recall that $g\, \geq\, 1$). Therefore, 
the short exact sequence in \eqref{e6} splits holomorphically, which 
contradicts \eqref{dq2} as before.

This completes the proof of \eqref{e11}.

Consider the functional $\widetilde{\phi}_Q$ in \eqref{e10}. We have
$$
\widetilde{\phi}_Q(\text{Id}_{J^1(Q)}) \,=\,
\text{degree}(J^1(Q))+\sum_{i=1}^d \text{trace}(-A_i)
$$
(see the proof of Lemma 3.2 in \cite{BDP}). Consequently, from \eqref{det} and \eqref{e8} it
follows immediately that
$$
\widetilde{\phi}_Q(\text{Id}_{J^1(Q)})\,=\, d-d\,=\, 0\, .
$$

Now take any $N$ as in \eqref{e11}. Since
$N(J^1(Q))\, \subset\, Q\otimes K_X$, from the definition of $A_i$ it follows immediately that
$$\text{trace}(A_i\circ N(x_i))\,=\, 0$$ for all $i$; note that in fact 
$N(x_i)\circ A_i\,=\, 0$ for all $i$.
Now as in the proof Proposition 4.1 in \cite[p.~86]{BDP} we conclude that
$\widetilde{\phi}_Q(N)\,=\, 0$. Consequently, from \eqref{e11} it follows that
$\widetilde{\phi}_Q\,=\, 0$. This implies that ${\phi}_Q\,=\, 0$.
Consequently, the short exact sequence in \eqref{e9} splits holomorphically.
Hence there is a logarithmic connection $D$ on $J^1(Q)$, which is nonsingular over $X\setminus S$,
such that ${\rm Res}(D,x_i)\,=\, -A_i$ at each $x_i\, \in \, S$.
\end{proof}

The following is a rather straight-forward consequence of Proposition \ref{prop1}.

\begin{corollary}\label{cor1}
Fix $A_i$ as in \eqref{e8}.
There is a logarithmic connection $D$ on $J^1(Q)$, nonsingular over $X\setminus S$, such that
\begin{enumerate}
\item ${\rm Res}(D,x_i)\,=\, -A_i$ at each $x_i\, \in \, S$, and

\item the logarithmic connection on $\bigwedge\nolimits^2 J^1(Q)\,=\, {\mathcal O}_X(S)$ induced by
$D$ coincides with the canonical logarithmic connection ${\mathcal D}_S$ given by the de
Rham differential.
\end{enumerate}
\end{corollary}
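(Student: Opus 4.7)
The plan is to start from a logarithmic connection provided by Proposition \ref{prop1} and then rigidify the induced determinant connection by a single holomorphic adjustment of the identity type.

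First, apply Proposition \ref{prop1} to obtain a logarithmic connection $D_0$ on $J^1(Q)$, nonsingular over $X\setminus S$, with $\mathrm{Res}(D_0,x_i)=-A_i$ for every $x_i\in S$. Let $\widehat{D}_0$ be the logarithmic connection induced by $D_0$ on $\bigwedge^2 J^1(Q)=\mathcal{O}_X(S)$. Since $A_i$ is the rank-one projection of \eqref{e8}, we have $\mathrm{trace}(-A_i)=-1$, so $\mathrm{Res}(\widehat{D}_0,x_i)=-1$ for every $x_i\in S$. This coincides with $\mathrm{Res}(\mathcal{D}_S,x_i)$.

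Next, use that two logarithmic connections on the same line bundle with the same singular locus and the same residues differ by a holomorphic one-form: indeed, their difference is an $\mathcal{O}_X$-linear section of $K_X\otimes\mathcal{O}_X(S)$ whose residue at every point of $S$ vanishes, hence lies in $H^0(X,K_X)$. Thus there exists $\omega\in H^0(X,K_X)$ with
\[
\mathcal{D}_S \,=\, \widehat{D}_0 + \omega .
\]

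Finally, define
\[
D \,:=\, D_0 + \tfrac{\omega}{2}\cdot \mathrm{Id}_{J^1(Q)} ,
\]
which is the sum of $D_0$ and a holomorphic section of $\mathrm{End}(J^1(Q))\otimes K_X$. Since the added term is holomorphic (no pole on $S$), $D$ is again a logarithmic connection with singular locus contained in $S$, and $\mathrm{Res}(D,x_i)=\mathrm{Res}(D_0,x_i)=-A_i$ for each $x_i\in S$. As $J^1(Q)$ has rank two, the connection induced by $D$ on $\bigwedge^2 J^1(Q)$ is
\[
\widehat{D}_0 + \mathrm{trace}\!\left(\tfrac{\omega}{2}\cdot \mathrm{Id}_{J^1(Q)}\right) \,=\, \widehat{D}_0 + \omega \,=\, \mathcal{D}_S ,
\]
which is precisely condition (2). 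There is no real obstacle here beyond identifying the affine-space structure on logarithmic connections with fixed residues and using that a scalar endomorphism contributes to the determinant connection through its trace; the argument is a direct trace-correction following Proposition \ref{prop1}.
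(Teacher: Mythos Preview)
Your proof is correct and follows essentially the same approach as the paper: start from Proposition~\ref{prop1}, compare the induced determinant connection with $\mathcal{D}_S$, observe that the difference lies in $H^0(X,K_X)$ because the residues agree, and then correct by half that form times the identity. The only cosmetic difference is a sign convention (you set $\omega=\mathcal{D}_S-\widehat{D}_0$ and add $\tfrac{\omega}{2}\cdot\mathrm{Id}$, whereas the paper sets $\theta=D'_1-\mathcal{D}_S$ and subtracts $\tfrac{\theta}{2}\cdot\mathrm{Id}$), which amounts to the same adjustment.
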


\begin{proof}
Let $D_1$ be a logarithmic connection on $J^1(Q)$ given by Proposition \ref{prop1}. The
logarithmic connection on $\bigwedge\nolimits^2 J^1(Q)\,=\, {\mathcal O}_X(S)$ induced by
$D_1$ will be denoted by $D'_1$. The difference
$$
D'_1 - {\mathcal D}_S\, \in\, H^0(X,\, K_X\otimes{\mathcal O}_X(S))\, .
$$
For any point $y\,\in\, S$, we have
$$
{\rm Res}(D'_1,y)\,=\, \text{trace}({\rm Res}(D_1,y)) \,=\, -\text{trace}(A_i) \,=\, -1
\,=\, {\rm Res}({\mathcal D}_S,y) \, .
$$
Therefore, we have
$$\theta\, :=\, D'_1 - {\mathcal D}_S\, \in \, H^0(X,\, K_X)\, \subset\,
H^0(X,\, K_X\otimes{\mathcal O}_X(S))\, .$$
Now it is straight-forward to check that the logarithmic connection
$$
D\, =\, D_1 - \frac{\theta}{2}\cdot \text{Id}_{J^1(Q)}
$$
on $J^1(Q)$ satisfies all the conditions in the statement of the corollary.
\end{proof}

\subsection{Quadratic forms with simple poles at $S$}\label{quadratic forms}

Let ${\mathcal C}(Q)$ denote the space of all logarithmic connections $D^1$ on $J^1(Q)$ satisfying the
following conditions:
\begin{enumerate}
\item $D^1$ is nonsingular over $X\setminus S$,

\item ${\rm trace}({\rm Res}(D^1, x_i))\, =\, -1$ and ${\rm Res}(D^1,x_i)(w)\,=\, -w$
for every $x_i\, \in \, S$ and $w\, \in\, (Q\otimes K_X)_{x_i}$, where
$(Q\otimes K_X)_{x_i}\, \subset\, J^1(Q)_{x_i}$ is the line in \eqref{e6}, and

\item the logarithmic connection on $\bigwedge\nolimits^2 J^1(Q)\,=\, {\mathcal O}_X(S)$ induced by
$D^1$ coincides with the canonical logarithmic connection ${\mathcal D}_S$.
\end{enumerate}

From Theorem \ref{thm1} we know that an element $D^1 \, \in\, {\mathcal C}(Q)$ corresponds to a
branched projective structure on $X$ with branching type $S$ if and only if
the homomorphism $\rho(D^1, y)$ in Proposition \ref{propn1} vanishes for every $y\,\in\, S$.
As noted in Remark \ref{remnu}, this correspondence is not bijective.

The space of all logarithmic connections on $J^1(Q)$ singular over $S$ is an affine space for the vector space 
$H^0(X,\, \text{End}(J^1(Q))\otimes K_X\otimes {\mathcal O}_X(S))$. However, for any $D^1\, \in\, {\mathcal 
C}(Q)$ and $\omega\, \in\, H^0(X,\, \text{End}(J^1(Q))\otimes K_X\otimes {\mathcal O}_X(S))$, the
logarithmic connection $D^1+\omega$ may not satisfy the conditions (2) and (3) in the above definition of
${\mathcal C}(Q)$. We note that $D^1+\omega$ satisfies condition (3) in the definition of
${\mathcal C}(Q)$ if and only if $\text{trace}(\omega)\, \in\, H^0(X,\, K_X\otimes {\mathcal O}_X(S))$
vanishes.

Let $\text{Aut}(J^1(Q))$ denote the group of all holomorphic automorphisms $$T\, :\, J^1(Q)\, \longrightarrow\, 
J^1(Q)$$ such that the induced automorphism $\bigwedge\nolimits^2 T\, :\, \bigwedge\nolimits^2 J^1(Q)\, 
\longrightarrow\, \bigwedge\nolimits^2 J^1(Q)$ is the identity map of the line bundle $\bigwedge\nolimits^2 J^1(Q)$. 
This group $\text{Aut}(J^1(Q))$ has a natural action on the space of all logarithmic connections on $J^1(Q)$ singular 
over $S$. The action of any $T\, \in\, {\rm Aut}(J^1(Q))$ on any logarithmic connection $D'$ on $J^1(Q)$, singular over 
$S$, will be denoted by $T\circ D'$.

Since $T$ induces the trivial automorphism of $\bigwedge^2 J^1(Q)$, the two logarithmic connections
$D$ and $T\circ D'$ induce the same logarithmic connection on $\bigwedge^2 J^1(Q)$.

\begin{lemma}\label{lem2}
The natural action of ${\rm Aut}(J^1(Q))$ on the
space of all logarithmic connections on $J^1(Q)$ singular over $S$ preserves the subset
${\mathcal C}(Q)$ defined above.
\end{lemma}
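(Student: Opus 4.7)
The plan is to fix $T \in \text{Aut}(J^1(Q))$ and $D^1 \in \mathcal{C}(Q)$ and verify that the gauge-transformed connection $T \circ D^1$ satisfies each of the three defining conditions of $\mathcal{C}(Q)$. Conditions (1) and (3) are essentially formal: since $T$ and $T^{-1}$ are globally defined holomorphic automorphisms of $J^1(Q)$, the gauge transformation does not alter the singular locus, so $T \circ D^1$ is nonsingular over $X \setminus S$, giving (1); and since $\bigwedge^2 T$ is the identity by the very definition of $\text{Aut}(J^1(Q))$, the connection induced by $T \circ D^1$ on $\bigwedge^2 J^1(Q) \,=\, \mathcal{O}_X(S)$ coincides with the one induced by $D^1$, so (3) persists (this observation is already recorded in the paragraph preceding the lemma).

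The content of the lemma lies in verifying condition (2). Under a gauge transformation, residues transform by conjugation,
$$
\text{Res}(T \circ D^1, x_i) \,=\, T(x_i) \circ \text{Res}(D^1, x_i) \circ T(x_i)^{-1},
$$
and hence the trace is preserved: $\text{trace}(\text{Res}(T \circ D^1, x_i)) = -1$. The delicate point is the eigenspace statement: one needs $\text{Res}(T \circ D^1, x_i)$ to still act by $-1$ on the entire line $(Q \otimes K_X)_{x_i}$, and a priori there is no reason for a global automorphism of $J^1(Q)$ to preserve this subbundle.

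To resolve this I would invoke the rigidity of $H^0(X, \text{End}(J^1(Q)))$ already established inside the proof of Proposition \ref{prop1}: every global endomorphism has the form $c \cdot \text{Id}_{J^1(Q)} + N$ with $c \in \mathbb{C}$ and $N$ a nilpotent endomorphism satisfying $N(J^1(Q)) \subset Q \otimes K_X$ and $N(Q \otimes K_X) = 0$. Applied to $T$, and using that the determinant of such an endomorphism is $c^2$ while $\bigwedge^2 T$ is required to be the identity, this forces $c = \pm 1$. Consequently $T$ preserves the subbundle $Q \otimes K_X$ and restricts to $\pm \text{Id}$ on it; the same then holds for $T^{-1}$. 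For any $w \in (Q \otimes K_X)_{x_i}$ one therefore computes
$$
\text{Res}(T \circ D^1, x_i)(w) \,=\, T(x_i)\bigl(\text{Res}(D^1, x_i)(\pm w)\bigr) \,=\, T(x_i)(\mp w) \,=\, -w,
$$
using condition (2) for $D^1$ in the middle equality. This completes the verification.

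The main obstacle is precisely the eigenspace preservation in condition (2); the trace part and conditions (1) and (3) are formalities. The crucial input making the argument work is the rigid structure of $H^0(X, \text{End}(J^1(Q)))$ established in the proof of Proposition \ref{prop1} via the indecomposability of $J^1(Q)$ and the degree inequalities for $Q$ under Assumption \ref{a1}; without such rigidity, an automorphism could conceivably swap the subbundle $Q \otimes K_X$ with a complement at some $x_i$ and destroy the residue condition.
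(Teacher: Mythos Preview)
Your proof is correct and follows essentially the same approach as the paper: both invoke the structure result from the proof of Proposition \ref{prop1} (equation \eqref{e11}) to write $T = c\cdot\mathrm{Id} + N$ with $c = \pm 1$ and $N$ nilpotent with image in $Q\otimes K_X$, and then deduce the residue conditions. The paper's proof is terser, simply stating that from this form ``it is straightforward to check'' the residue conditions, while you spell out the conjugation formula and the eigenspace verification explicitly.
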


\begin{proof}
In the proof of Proposition \ref{prop1} we saw that any $T\, \in\, {\rm Aut}(J^1(Q))$ is
of the form $c\cdot\text{Id}_{J^1(Q)}+ N$,
where $c = \pm 1$ (since $T$ acts trivially on $\bigwedge^2 J^1(Q)$) and $N$ is a nilpotent endomorphism of $J^1(Q)$ with
$N(J^1(Q))\, \subset\, Q\otimes K_X$ (see \eqref{e11}). Using this it is straightforward to
check for any $D'\, \in\, {\mathcal C}(Q)$, the logarithmic connection $T\circ D'$ also satisfies
the residue conditions in the definition of ${\mathcal C}(Q)$.
\end{proof}

From Lemma \ref{lem2} we know that the group ${\rm Aut}(J^1(Q))$ acts on ${\mathcal C}(Q)$.

\begin{lemma}\label{lem3}
Take any $D'\, \in\, {\mathcal C}(Q)$ and $T\, \in\, {\rm Aut}(J^1(Q))$ such that
$T\circ D'\,=\, D'$. Then $T\, =\, \pm {\rm Id}_{J^1(Q)}$.
\end{lemma}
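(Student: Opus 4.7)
The plan is to exploit the structure of $\mathrm{End}(J^1(Q))$ obtained in the proof of Proposition \ref{prop1}, combined with the regular bundle $(F',\,D_{F'})$ constructed in the converse direction of Theorem \ref{thm1}. First I would write $T = c\cdot\mathrm{Id}_{J^1(Q)} + N$ using \eqref{e11}, where $c\in \mathbb{C}$ and $N$ is nilpotent with $N(J^1(Q))\subset Q\otimes K_X$ and $N(Q\otimes K_X)=0$. The condition $\bigwedge^2 T = \mathrm{Id}$ forces $c^2 = 1$, i.e.\ $c = \pm 1$ (as used in Lemma \ref{lem2}). Since $\pm\mathrm{Id}$ commutes with every connection, the hypothesis $T\circ D' = D'$ is equivalent to $N$ being $D'$-flat as a section of $\mathrm{End}(J^1(Q))$. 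Identifying $N$ with a section $\widetilde{N}\in H^0(X,\,K_X)$ via the factorization $J^1(Q)\stackrel{q_0}{\to} Q \stackrel{\widetilde{N}}{\to} Q\otimes K_X \stackrel{\iota_0}{\to} J^1(Q)$, the goal reduces to showing $\widetilde{N} = 0$.

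Next I would pass to the subsheaf $F'\subset J^1(Q)$ defined by the $0$-eigenlines $\ell'_y = \ker\mathrm{Res}(D',y)$ of the residues (equation \eqref{g2}), together with the line subbundle $L'\subset F'$ from \eqref{lp}; by Theorem \ref{thm1} the connection $D'$ restricts to a regular holomorphic connection $D_{F'}$ on $F'$, with $F'/L'\cong Q$ and $\deg(L') = g - 1 - d/2$ by \eqref{it2}. The key observation is that $T$ preserves $F'$: since $T$ commutes with $D'$, at each $y\in S$ the fibre map $T_y$ commutes with $\mathrm{Res}(D',y)$, and hence preserves its $0$-eigenline $\ell'_y$. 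A pointwise check at each $y\in S$ then shows this preservation forces $\widetilde{N}$ to vanish on $S$: for any section $s$ of $F'$, the value $T(s)(y) = c\cdot s(y) + N_y(s(y))$ must lie in $\ell'_y$, but $N_y(s(y))\in (Q\otimes K_X)_y$ and $\ell'_y \cap (Q\otimes K_X)_y = 0$, so $N_y(s(y))=0$ for all $s$. Thus $\widetilde{N}\in H^0(X,\,K_X\otimes \mathcal{O}_X(-S))$, and $N|_{F'}$ is a well-defined nilpotent endomorphism of $F'$ whose image lies inside $L'$ and whose kernel contains $L'$.

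Finally I would derive a contradiction from $\widetilde{N}\not\equiv 0$. Under that assumption, $N|_{F'}$ is a nonzero $D_{F'}$-flat endomorphism of $F'$; its image is a nonzero $D_{F'}$-invariant coherent subsheaf of $L'$, whose saturation inside $F'$ is a $D_{F'}$-invariant line subbundle. Since the image is already contained in the line subbundle $L'$, this saturation coincides with $L'$, so $L'$ is itself $D_{F'}$-invariant. Consequently $L'$ inherits a regular holomorphic connection from $D_{F'}$, and by Remark \ref{flat} this forces $\deg(L') = 0$. But Assumption \ref{a1}(2) gives $d\neq 2g-2$, hence $\deg(L') = g - 1 - d/2 \neq 0$, a contradiction. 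Therefore $\widetilde{N} = 0$, so $N = 0$ and $T = \pm\mathrm{Id}_{J^1(Q)}$. The main delicate step is the boundary analysis at $S$: one must correctly link the condition ``$T$ preserves $F'$'' to the vanishing of $\widetilde{N}$ on $S$, which is what allows the subsequent flatness argument to produce a global $D_{F'}$-invariant line subbundle on all of $X$ rather than only on $X\setminus S$, and thereby to invoke the degree obstruction from Assumption \ref{a1}(2).
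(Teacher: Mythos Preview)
There is one genuine misstep: you invoke Theorem~\ref{thm1} to assert that $D'$ restricts to a \emph{regular} holomorphic connection $D_{F'}$ on $F'$, but Theorem~\ref{thm1} requires all four conditions, in particular the vanishing of $\rho(D',y)$ at every $y\in S$, whereas the definition of ${\mathcal C}(Q)$ omits precisely that one. For an arbitrary $D'\in{\mathcal C}(Q)$ the induced connection on $F'$ is only logarithmic with \emph{nilpotent} residues (this is what the proof of Proposition~\ref{propn1} establishes), so your step ``$L'$ inherits a regular holomorphic connection from $D_{F'}$'' does not follow as written. The repair is short: once $L'$ is $D_{F'}$-invariant, the residue of $D_{F'}\vert_{L'}$ at each $y\in S$ is the restriction of a nilpotent endomorphism of $F'_y$ to an invariant line, hence zero; thus $D_{F'}\vert_{L'}$ is regular after all and ${\rm deg}(L')=0$ follows. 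Note also that your boundary analysis showing $\widetilde N\vert_S=0$ is not actually needed for what comes after: preservation of $F'$ by $N$ already follows from $T$ commuting with the residues, and $D_{F'}$-invariance of $L'$ extends from $X\setminus S$ to $X$ simply because the second fundamental form of $L'$ is holomorphic.

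The paper's own proof is shorter and avoids the passage to $F'$ altogether. From $N$ being $D'$-flat on $J^1(Q)$ it concludes that ${\rm image}(N)\subset J^1(Q)$ is $D'$-invariant; if $N\neq 0$ this image generates the line subbundle $\iota_0(Q\otimes K_X)$, so $D'$ preserves $\iota_0(Q\otimes K_X)$, contradicting Claim~A in the proof of Theorem~\ref{thm1} (which itself rests on ${\rm deg}(Q)\neq 0$). Your route reaches an equivalent degree obstruction, but through the modified bundle $F'$ and the extra residue bookkeeping above; the paper's direct appeal to Claim~A is the cleaner path.
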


\begin{proof}
Let $\widetilde{D}'$ be the logarithmic connection on $\text{End}(J^1(Q))$ induced by the
logarithmic connection $D'$ on $J^1(Q)$.

Take $T\,=\, c\cdot\text{Id}_{J^1(Q)}+ N$, where $c\,=\,\pm 1$ and $N$ is as in
\eqref{e11}. Since $T\circ D'\,=\, D'$, it follows that the section $N$ of $\text{End}(J^1(Q))$
is flat with respect to the logarithmic connection $\widetilde{D}'$ on $\text{End}(J^1(Q))$. Therefore,
$\text{image}(N)\, \subset\, J^1(Q)$ is preserved by the logarithmic connection $D'$. If $N\, \not=\, 0$, then
$\text{image}(N)$ generates the subbundle $\iota_0(Q\otimes K_X)\, \subset\, J^1(Q)$ in \eqref{e6}. In the
proof of Theorem \ref{thm1} we saw that $D'$ does not preserve the subbundle $\iota_0(Q\otimes K_X)$
(see \textbf{Claim A}). Hence we have $N\,=\, 0$.
\end{proof}

Since $\pm {\rm Id}_{J^1(Q)}\, \subset\, {\rm Aut}(J^1(Q))$ acts trivially on the logarithmic connections on
$J^1(Q)$, the above action of ${\rm Aut}(J^1(Q))$ on ${\mathcal C}(Q)$ produces an action of
$$
{\rm Aut}'(J^1(Q))\,:=\, {\rm Aut}(J^1(Q))/(\pm {\rm Id}_{J^1(Q)})
$$
on ${\mathcal C}(Q)$.

The following is a direct consequence of Lemma \ref{lem3}:

\begin{corollary}\label{cor2}
The action of ${\rm Aut}'(J^1(Q))$ on ${\mathcal C}(Q)$ is free.
\end{corollary}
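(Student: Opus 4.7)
The plan is to derive the corollary directly from Lemma \ref{lem3} by tracking what a nontrivial stabilizer of the quotient action would entail at the level of $\mathrm{Aut}(J^1(Q))$. Since $\pm\mathrm{Id}_{J^1(Q)}$ acts trivially on any logarithmic connection (as scalar multiples of the identity commute with every differential operator and the Leibniz term is unaffected by a constant scaling of a section), the action of $\mathrm{Aut}(J^1(Q))$ on $\mathcal{C}(Q)$ factors through the quotient $\mathrm{Aut}'(J^1(Q))$, so the quotient action is well defined; this is the implicit justification for the definition preceding the corollary.

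To show freeness, I would take an element $D'\in\mathcal{C}(Q)$ together with a class $[T]\in\mathrm{Aut}'(J^1(Q))$ stabilizing it, i.e.\ $[T]\circ D'\,=\,D'$, and choose a representative $T\in\mathrm{Aut}(J^1(Q))$. By definition of the quotient action, this equality lifts to $T\circ D'\,=\,D'$ in $\mathrm{Aut}(J^1(Q))$. Lemma \ref{lem3} then forces $T\,=\,\pm\mathrm{Id}_{J^1(Q)}$, so $[T]$ is the identity element of $\mathrm{Aut}'(J^1(Q))\,=\,\mathrm{Aut}(J^1(Q))/(\pm\mathrm{Id}_{J^1(Q)})$. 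This exhausts the content of the corollary.

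There is essentially no obstacle here; the corollary is packaging Lemma \ref{lem3} in group-theoretic language. The only subtlety worth flagging explicitly is that the action of $\mathrm{Aut}(J^1(Q))$ on $\mathcal{C}(Q)$ does descend to the quotient $\mathrm{Aut}'(J^1(Q))$, which is guaranteed because for any logarithmic connection $D'$ and any $c\in\mathbb{C}^{\times}$ one has $(c\cdot\mathrm{Id})\circ D'\,=\,D'$, in particular for $c\,=\,-1$. Once that is noted, the deduction from Lemma \ref{lem3} is a one-line verification.
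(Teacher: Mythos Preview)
Your proof is correct and matches the paper's own treatment: the paper simply states that the corollary is ``a direct consequence of Lemma \ref{lem3}'' without giving any further argument, and what you have written is precisely the routine unpacking of that deduction.
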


The orbit ${\rm Aut}'(J^1(Q))\circ D'\, \subset\, {\mathcal C}(Q)$ of any $D'\, \in\, {\mathcal C}(Q)$
under the action of ${\rm Aut}'(J^1(Q))$ will be denoted by ${\rm Orb}(D')$.

Consider the injective homomorphism of vector bundles
$$
\psi' \,:\, K_X\,=\, \text{Hom}(Q,\, Q\otimes K_X)\, \longrightarrow\, \text{End}(J^1(Q))\, , $$
defined by $$\ \ t \longmapsto (v\,
\longmapsto \, \iota_0(t(q_0(v))))\, , \ \ \forall \ t \, \in\, \text{Hom}(Q,\, Q\otimes K_X)_x
\, , \ \ \forall \ v\, \in\, J^1(Q)_x\, , \ \ \forall \ x\,\in\, X\, ,$$
where $\iota_0$ and $q_0$ are the homomorphisms in \eqref{e6}. This produces a homomorphism
$$
\psi\,=\, \psi'\otimes 
{\rm Id} \, :\, K^{\otimes 2}_X\otimes {\mathcal O}_X(S)\,=\, K_X\otimes K_X\otimes {\mathcal O}_X(S)\, 
\longrightarrow\,\text{End}(J^1(Q))\otimes K_X\otimes {\mathcal O}_X(S)\, ,
$$
where ${\rm Id}$ stands for the identity map of $K_X\otimes {\mathcal O}_X(S)$. Let
\begin{equation}\label{e13}
\psi_*\, :\, H^0(X,\, K^{\otimes 2}_X\otimes {\mathcal O}_X(S))\, \longrightarrow\,
H^0(X,\, \text{End}(J^1(Q))\otimes K_X\otimes {\mathcal O}_X(S))
\end{equation}
be the homomorphism of global sections induced by this $\psi$.

\begin{proposition}\label{prop2}
Take any $D\, \in\, {\mathcal C}(Q)$ and $\theta\, \in\, H^0(X,\, K^{\otimes 2}_X\otimes {\mathcal O}_X(S))$. 
\begin{enumerate}
\item Then
$$
D+\psi_*(\theta)\, \in\, {\mathcal C}(Q)\, ,
$$
where $\psi_*$ is constructed in \eqref{e13}.

\item The map
$$
\Gamma_D\, :\, H^0(X,\, K^{\otimes 2}_X\otimes {\mathcal O}_X(S))\, \longrightarrow\,
{\mathcal C}(Q)\, ,\ \ \theta\, \longmapsto\, D+\psi_*(\theta)
$$
is an embedding.

\item For any $D'\, \in\, {\rm image}(\Gamma_D)$,
$$
{\rm image}(\Gamma_D)\cap {\rm Orb}(D')\,=\, \{D'\}\, .
$$
\end{enumerate}
\end{proposition}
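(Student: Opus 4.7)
The plan is to treat the three parts in sequence: (1) and (2) follow from structural properties of the map $\psi$, while (3) is substantive and ultimately reduces to the nonvanishing of the second fundamental form of $Q\otimes K_X$ with respect to $D'$ (Claim A in the proof of Theorem \ref{thm1}).

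For part (1), the image of $\psi'$ consists of endomorphisms of $J^1(Q)$ factoring as $J^1(Q)\xrightarrow{q_0} Q \to Q\otimes K_X\xrightarrow{\iota_0} J^1(Q)$, so they are strictly upper triangular with respect to the filtration $0\subset Q\otimes K_X\subset J^1(Q)$. Consequently $\psi_*(\theta)$ is holomorphic on $X\setminus S$, has identically vanishing trace, and its residue at each $x_i\in S$ is nilpotent with kernel containing $(Q\otimes K_X)_{x_i}$. These three properties show respectively that $D+\psi_*(\theta)$ is nonsingular away from $S$, induces the same connection $\mathcal{D}_S$ as $D$ on $\bigwedge^2 J^1(Q)$, and has residue still satisfying the trace $=-1$ and $-\text{Id}$-on-$(Q\otimes K_X)_{x_i}$ conditions. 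Hence $D+\psi_*(\theta)\in \mathcal{C}(Q)$. Part (2) then follows because $\Gamma_D$ is affine with linear part $\psi_*$, and $\psi_*$ is injective on $H^0$ since $\psi'$ is fibrewise injective ($q_0$ surjective, $\iota_0$ injective).

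For part (3), suppose $D'' = T\circ D'\in \text{image}(\Gamma_D)$ for some $T\in\text{Aut}'(J^1(Q))$, so $T\circ D' - D' \in \text{image}(\psi_*)$. Pick a representative $T = \text{Id}+N\in\text{Aut}(J^1(Q))$; as in the proof of Lemma \ref{lem2}, we have $N = \psi'(n)$ for a unique $n\in H^0(X, K_X)$, and $N^2=0$. A standard gauge computation (using $T^{-1}=\text{Id}-N$) gives
\[
T\circ D' - D' \,=\, -\widetilde{D}' N \,+\, (\widetilde{D}' N)\circ N,
\]
where $\widetilde{D}'$ is the induced logarithmic connection on $\text{End}(J^1(Q))$. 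In a local filtered frame $\{\iota_0(e_1),\, e_2\}$ for $J^1(Q)$, with $D' = d + A$, $A = (a_{ij})$, and $N = \begin{pmatrix} 0 & h\\ 0 & 0\end{pmatrix}$, one computes
\[
\widetilde{D}' N \,=\, \begin{pmatrix} -h\, a_{21} & dh + (a_{11}-a_{22})\, h \\ 0 & h\, a_{21}\end{pmatrix}, \qquad (\widetilde{D}' N)\circ N \,=\, \begin{pmatrix} 0 & -h^2\, a_{21}\\ 0 & 0\end{pmatrix}.
\]

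The quadratic term is strictly upper triangular, hence already in $\text{image}(\psi)$. So $T\circ D' - D'\in\text{image}(\psi_*)$ forces the diagonal entries of $\widetilde{D}' N$ to vanish, i.e., $h\cdot a_{21}\equiv 0$. Globally, $a_{21}$ is the local representative of the second fundamental form $\eta$ of $Q\otimes K_X$ for $D'$, whose divisor is $S$ by Claim A of Theorem \ref{thm1}; in particular $\eta$ is nonzero on $X\setminus S$. Hence $n\equiv 0$, so $N=0$, and by Corollary \ref{cor2} $T$ is the identity in $\text{Aut}'(J^1(Q))$. Therefore $D'' = D'$. The main technical obstacle is the gauge computation plus the observation that the quadratic correction lands in $\text{image}(\psi)$ automatically; once these are established, the argument is driven by the nonvanishing of $\eta$ on $X\setminus S$.
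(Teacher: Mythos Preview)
Your proof is correct and follows essentially the same strategy as the paper's: for part (3), both arguments compute the gauge difference $T\circ D'-D'$ and observe that membership in $\mathrm{image}(\psi_*)$ forces $N\cdot\eta=0$, whence $N=0$ by Claim~A. The only cosmetic differences are that the paper first reduces from $D'$ to the base point $D$ via the commutation $\psi_*(\theta)\circ T=(T\otimes\mathrm{Id})\circ\psi_*(\theta)$ and then argues coordinate-freely, whereas you work directly with $D'$ in a local filtered frame (and note that Claim~A alone yields only $\eta\not\equiv 0$, which is all you use; the exact divisor statement requires the residue condition and a degree count given just after Claim~A).
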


\begin{proof}
For any $y\, \in\, S$, we have, by definition of $\psi'$ and $\psi$, that
$$
\psi_*(\theta)(y)\,=\, N_y\otimes u\, ,
$$
where $u\, \in\, (K_X \otimes {\mathcal O}_X(S))_y$ and $N_y\, \in\, \text{End}(J^1(Q))_y$ is a nilpotent endomorphism
with $N_y(\iota_0(Q\otimes K_X)_y)\,=\, 0$
(the homomorphism $\iota_0$ is the one in \eqref{e6}).
Since $N_y(\iota_0(Q\otimes K_X)_y)\,=\, 0$ and $\text{trace}(\psi_*(\theta))\,=\, 0$,
 it follows immediately that $D+\psi_*(\theta)$ satisfies the residue conditions in the
definition of ${\mathcal C}(Q)$. Also the logarithmic connections on $\bigwedge\nolimits^2 J^1(Q)$ induced by $D$ and
$D+\psi_*(\theta)$ coincide, because $\text{trace}(\psi_*(\theta))\,=\, 0$.
Hence we have $D+\psi_*(\theta)\, \in\, {\mathcal C}(Q)$.

The map $\Gamma_D$ is evidently an embedding as $\psi_*$ is injective.

To prove the final part of the proposition, note that for any $T\, \in\, {\rm Aut}(J^1(Q))$
and $\theta\, \in\, H^0(X,\, K^{\otimes 2}_X\otimes {\mathcal O}_X(S))$, we have
\begin{equation}\label{c1}
\psi_*(\theta)\circ T\,=\, (T\otimes \text{Id}_{K_X \otimes {\mathcal O}_X(S)})\circ \psi_*(\theta)\, ;
\end{equation}
both sides are homomorphisms from $J^1(Q)$ to $J^1(Q)\otimes K_X\otimes {\mathcal O}_X(S)$.
The equality in \eqref{c1} follows immediately form the fact that $T$ is of the form
$c\cdot\text{Id}_{J^1(Q)}+ N$,
where $c \,=\, \pm 1$ and $N$ is a nilpotent endomorphism of $J^1(Q)$ with
$N(J^1(Q))\, \subset\, Q\otimes K_X$.

Take $D'\,=\, D+ \psi_*(\theta_1)\, \in\, {\rm image}(\Gamma_D)$ and
any $T_1\, \in\, {\rm Aut}(J^1(Q))$ such that
\begin{equation}\label{c2}
T\circ D'\,=\, D+\psi_*(\theta_2)\, \in\, {\rm image}(\Gamma_D)\, ,
\end{equation}
where $\theta_1,\, \theta_2\, \in\, H^0(X,\, K^{\otimes 2}_X\otimes {\mathcal O}_X(S))$. From
\eqref{c1} we have $T\circ D'\,=\, T\circ D + \psi_*(\theta_1)$, so from \eqref{c2}
it follows that
\begin{equation}\label{e14}
T\circ D \,=\, D+\psi_*(\theta_2)- \psi_*(\theta_1)\,=\, D+\psi_*(\theta_2-\theta_1)\, .
\end{equation}

Write $T$ in \eqref{e14} as
\begin{equation}\label{e15}
T\,=\, c\cdot\text{Id}_{J^1(Q)}+ N\, ,
\end{equation}
where $c\,=\,\pm 1$ and $N$ is nilpotent with
$N(J^1(Q))\, \subset\, \iota_0(Q\otimes K_X)$; the homomorphism $\iota_0$ is
the one in \eqref{e6}. This implies that
$T^{-1}\,=\, c^{-1}\cdot\text{Id}_{J^1(Q)}- N$. Hence we have
$$
(T\circ D)(s) \,=\,D(s) + c(D(N(s))- N(D(s))) - N(D(N(s)))
$$
for every locally defined holomorphic section $s$ of $J^1(Q)$. Therefore, from \eqref{e14}, we have
\begin{equation}\label{e16}
c(D(N(s))- N(D(s))) - N(D(N(s)))\,=\, (\psi_*(\theta_2-\theta_1))(s)\, .
\end{equation}
Now take $s$ to be a locally defined holomorphic section of $J^1(Q)$ that does not lie in the image
of the homomorphism $\iota_0$ in \eqref{e6}. Note that
$-cN(D(s)) - N(D(N(s)))$ lies in $\iota_0(Q\otimes K_X)\otimes K_X\otimes {\mathcal O}_X(S)$. Also,
$(\psi_*(\theta_2-\theta_1))(s)$ lies in $\iota_0(Q\otimes K_X)\otimes K_X\otimes {\mathcal O}_X(S)$, because
$$\text{image}(\psi)(J^1(Q))\, \subset\,J^1(Q)\otimes K_X\otimes {\mathcal O}_X(S)$$
is actually contained in $\iota_0(Q\otimes K_X)\otimes K_X\otimes {\mathcal O}_X(S)$. Hence from
\eqref{e16} it follows that
$D(N(s))$ lies in $\iota_0(Q\otimes K_X)\otimes K_X\otimes {\mathcal O}_X(S)$.

Assume that $N$ is nonzero. Then the subbundle of $J^1(Q)$ generated by the image of $N$ is
$\iota_0(Q\otimes K_X)$. Since $D(N(s))$ lies in $\iota_0(Q\otimes K_X)\otimes K_X\otimes {\mathcal O}_X(S)$,
it now follows that $D$ preserves the subbundle $\iota_0(Q\otimes K_X)\, \subset\, J^1(Q)$.
But we saw in \textbf{Claim A} in the proof of Theorem \ref{thm1} that
$D$ does not preserve $\iota_0(Q\otimes K_X)$.

Hence we conclude that $N\,=\, 0$. This implies that $T\circ D\,=\, D$. Now from \eqref{e14} it follows that 
$\theta_1\,=\, \theta_2$.
\end{proof}

\section{A special class of logarithmic connections on jet bundle}\label{s6}

Consider the vector bundle $J^1(Q)$, where $Q$ is the line bundle in \eqref{dq}. 
Let
\begin{equation}\label{e17}
0\, \longrightarrow\, J^1(Q)\otimes K_X \, \stackrel{\iota_1}{\longrightarrow}\, J^1(J^1(Q))\,
\stackrel{q_1}{\longrightarrow}\, J^0(J^1(Q)) \,=\, J^1(Q) \, \longrightarrow\, 0
\end{equation}
be the jet sequence for $J^1(Q)$ as in \eqref{je1}. Any holomorphic homomorphism of vector bundles
$$h\, :\, V\, \longrightarrow\, W$$ produces a homomorphism
$$
h_*\, :\,J^1(V)\, \longrightarrow\, J^1(W)
$$
of jet bundles. Let
\begin{equation}\label{e18}
q_{0*}\, :\, J^1(J^1(Q))\, \longrightarrow\, J^1(Q)
\end{equation}
be the homomorphism produced by the projection $q_0$ in \eqref{e6}. It is straight-forward to
check that $$q_0\circ q_{0*}\,=\, q_0\circ q_1\, \in\, H^0(X,\, \text{Hom}(J^1(J^1(Q)),\, Q))\, ,$$
where $q_1$ is the homomorphism in \eqref{e17}. Therefore, we have
\begin{equation}\label{e19}
q_1- q_{0*}\, :\, J^1(J^1(Q))\, \longrightarrow\, \text{kernel}(q_0)\,=\, Q\otimes K_X\, .
\end{equation}

Take a logarithmic connection $D$ on $J^1(Q)$ singular over $S$. So
$D$ produces a homomorphism
\begin{equation}\label{td}
\widetilde{D}\, :\, J^1(J^1(Q))\, \longrightarrow\,J^1(Q)\otimes K_X\otimes {\mathcal O}_X(S)
\end{equation}
such that $\widetilde{D}\circ \iota_1$ is the homomorphism $J^1(Q)\otimes K_X\, \longrightarrow\,
J^1(Q)\otimes K_X\otimes {\mathcal O}_X(S)$ given by the natural inclusion of sheaves, where $\iota_1$
is the homomorphism in \eqref{e17}. Define
\begin{equation}\label{kd}
K(\widetilde{D})\,:=\,\text{kernel}(\widetilde{D})\, \subset\, J^1(J^1(Q))\, .
\end{equation}
The restriction
$$
q_1\vert_{K(\widetilde{D})}\, :\, K(\widetilde{D})\, \longrightarrow\, J^1(Q)
$$
is an isomorphism over the complement
\begin{equation}\label{e20}
X'\, :=\, X\setminus S\, ,
\end{equation}
where $q_1$ is the homomorphism in \eqref{e17}; this follows form the fact $\widetilde{D}\circ\iota_1$
is the natural inclusion of $J^1(Q)\otimes K_X$ in $J^1(Q)\otimes K_X\otimes {\mathcal O}_X(S)$.
Hence the restriction $\widetilde{D}\vert_{X'}$ is a regular connection on $J^1(Q)\vert_{X'}$. Let
\begin{equation}\label{e21}
q^D_1\, :=\, (q_1\vert_{K(\widetilde{D})})\vert_{X'}\, :\, K(\widetilde{D})\vert_{X'}\,
\longrightarrow\, J^1(Q)\vert_{X'}
\end{equation}
be this isomorphism over $X'$, where $K(\widetilde{D})$ is constructed in \eqref{kd}.

\begin{lemma}\label{lemext}
The homomorphism
$$
((q_1- q_{0*})\vert_{X'})\circ (q^D_1)^{-1}\, :\, J^1(Q)\vert_{X'}\, \longrightarrow\,
(Q\otimes K_X)\vert_{X'}
$$
over $X'\, \subset\, X$, where $q_1- q_{0*}$ and $q^D_1$ are the homomorphisms constructed in \eqref{e19} and \eqref{e21}
respectively, extends to a homomorphism
$$
J^1(Q)\, \longrightarrow\,Q\otimes K_X\otimes {\mathcal O}_X(S)
$$
over entire $X$.
\end{lemma}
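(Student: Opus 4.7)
The strategy is to prove extendability locally near each point $y \in S$. Since the target on $X$ is $Q \otimes K_X \otimes \mathcal{O}_X(S)$, extending through $y$ amounts to producing a section of the extended sheaf that has at most a simple pole at $y$, which is compatible with the fact that the logarithmic connection $D$ itself has at most a simple pole there. Over the complement $X'$ the map is already given, so the whole problem is to exhibit a local formula on a neighborhood of $y$ that agrees with it on $X' \cap U$ and is manifestly a sheaf homomorphism into $(Q \otimes K_X \otimes \mathcal{O}_X(S))|_U$.

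Near $y$, I would pick a holomorphic coordinate $z$ vanishing at $y$ and a local frame $e$ of $Q$ on a neighborhood $U$. This gives a local frame $\{j^1(e),\, \iota_0(e \otimes dz)\}$ of $J^1(Q)$, in which a section $s$ corresponds to a pair of holomorphic functions $(f, g)$. A direct check shows that $(q_1 - q_{0*})(j^1(s))$ evaluates, at a point $x$, to $(g(x) - f'(x))$ times $\iota_0(e \otimes dz)|_x$; the derivative $f'$ appears because $q_{0*}(j^1(s)) = j^1(q_0(s)) = j^1(f \cdot e)$. I would then write $D = d + \Omega$ with $\Omega$ a $2 \times 2$ matrix of logarithmic $1$-forms having at most a simple pole at $y$; for $x \in X' \cap U$, the unique horizontal lift of $\sigma = s(x)$ to $K(\widetilde{D})_x$ is $j^1(s)_x - \iota_1(D(s)(x))$. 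Applying $q_1 - q_{0*}$ to this lift and using the naturality identity $q_{0*} \circ \iota_1 = \iota_0 \circ (q_0 \otimes \mathrm{Id}_{K_X})$, the $f'\,dz$ piece from the first term cancels exactly against the $df$ that emerges from $D(s)$.

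The resulting expression depends only on $(f(x), g(x))$, i.e., only on $\sigma$, with coefficients built from the first row of $\Omega$ together with a $dz$ contribution coming from the symbol. These coefficients are holomorphic sections of $K_X \otimes \mathcal{O}_X(S)$ on all of $U$, including at $y$, so the formula defines a sheaf homomorphism $J^1(Q)|_U \to (Q \otimes K_X \otimes \mathcal{O}_X(S))|_U$ extending the given map across $y$. Since any such extension is uniquely determined by its restriction to $X' \cap U$, the local extensions on different patches automatically agree on overlaps and glue to a global homomorphism on $X$. The main subtlety is isolating the cancellation of the $f'$ term: it is precisely what makes the output $\mathcal{O}_X$-linear in $\sigma$ rather than a genuine first-order differential operator, and once this cancellation is made transparent the existence of the extension follows immediately from the logarithmic nature of $\Omega$.
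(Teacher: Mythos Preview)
Your argument is correct and complete: the local formula you derive, namely that $\sigma=(f,g)$ is sent to $(\omega_{11}f+(1+\omega_{12})g)\,e\otimes dz$ after the $f'$--cancellation, is exactly the content of the lemma, and the logarithmic bound on $\Omega$ gives the simple--pole extension immediately. The gluing step is fine because the extension through each point of $S$ is unique.

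The paper takes a different, entirely coordinate--free route. Instead of computing in a local frame, it builds an auxiliary rank--four bundle
\[
{\mathcal H}\;=\;\bigl((J^1(Q)\otimes K_X\otimes{\mathcal O}_X(S))\oplus J^1(J^1(Q))\bigr)\big/\,(J^1(Q)\otimes K_X)
\]
which sits in a short exact sequence $0\to J^1(Q)\otimes K_X\otimes{\mathcal O}_X(S)\to{\mathcal H}\to J^1(Q)\to 0$; the logarithmic connection $D$ provides a global holomorphic splitting $\beta_1:J^1(Q)\to{\mathcal H}$ of this sequence, and there is a natural map $\beta_2:{\mathcal H}\to J^1(J^1(Q))\otimes{\mathcal O}_X(S)$. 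The extension is then obtained as $((q_1-q_{0*})\otimes{\rm Id})\circ\beta_2\circ\beta_1$, defined globally from the outset. In effect the paper is packaging your formula $j^1(s)_x-\iota_1(D(s)(x))$ for the horizontal lift as a global splitting of an exact sequence, so that no patching argument is needed and the pole order is encoded once and for all in the twist by ${\mathcal O}_X(S)$. Your approach is more elementary and makes the ${\mathcal O}_X$--linearity (the $f'$--cancellation) completely explicit; the paper's approach is cleaner to state and avoids any choice of coordinates or frames, which is useful later when this construction is fed into the definition of \emph{special} connections and the identification with second--order operators.
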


\begin{proof}
Consider the natural inclusion of sheaves $J^1(Q)\otimes K_X\, \hookrightarrow\, J^1(Q)\otimes K_X\otimes
{\mathcal O}_X(S)$. Using it, we have the injective homomorphism of vector bundles
$$
J^1(Q)\otimes K_X\, \longrightarrow\, (J^1(Q)\otimes K_X\otimes {\mathcal O}_X(S)) \oplus
J^1(J^1(Q)),\, \ \ v\, \longmapsto\, (v,\, -\iota_1(v))\, ,
$$
where $\iota_1$ is the homomorphism in \eqref{e17}. The corresponding quotient
$$
{\mathcal H}\, :=\, ((J^1(Q)\otimes K_X\otimes {\mathcal O}_X(S)) \oplus J^1(J^1(Q)))/(J^1(Q)\otimes K_X)
$$
is a vector bundle, because $\iota_1$ is fiberwise injective. The holomorphic vector bundle $\mathcal H$
fits in the short exact sequence
\begin{equation}\label{shl}
0\, \longrightarrow\, J^1(Q)\otimes K_X\otimes {\mathcal O}_X(S)\, \stackrel{\iota'_1}{\longrightarrow}\, {\mathcal H}
\,\stackrel{q'_1}{\longrightarrow}\, J^1(Q) \, \longrightarrow\, 0\, ,
\end{equation}
where $q'_1$ is induced by the projection
$$
(J^1(Q)\otimes K_X\otimes {\mathcal O}_X(S)) \oplus J^1(J^1(Q))\, \longrightarrow\, J^1(Q),\, \ \
v\, \longmapsto\, (0, q_1(v))
$$
($q_1$ is the homomorphism in \eqref{e17}),
while $\iota'_1$ sends any $v\, \in\, J^1(Q)\otimes K_X\otimes {\mathcal O}_X(S)$ to the image of
$(v,\, 0)\, \in\, (J^1(Q)\otimes K_X\otimes {\mathcal O}_X(S)) \oplus J^1(J^1(Q))$ in the quotient
bundle ${\mathcal H}$. Also, note that
$$
{\mathcal H}\vert_{X'}\,=\, J^1(J^1(Q))\vert_{X'}\, .
$$

Consider the homomorphism
$$
(J^1(Q)\otimes K_X\otimes {\mathcal O}_X(S)) \oplus J^1(J^1(Q))\, \longrightarrow\, J^1(Q)\otimes K_X\otimes {\mathcal
O}_X(S)\, , \ \ (v,\, w) \, \longmapsto\, v+\widetilde{D}(w)\, ,
$$
where $\widetilde{D}$ is the homomorphism in \eqref{td}. In descends to a homomorphism
$$
\widetilde{D}_1\, :\, {\mathcal H}\, \longrightarrow\,J^1(Q)\otimes K_X\otimes {\mathcal O}_X(S)
$$
which satisfies the equation $\widetilde{D}_1\circ\iota'_1\,=\,
\text{Id}_{J^1(Q)\otimes K_X\otimes {\mathcal O}_X(S)}$, where $\iota'_1$ is the
homomorphism in \eqref{shl}. In other words, the homomorphism $\widetilde{D}_1$ produces
a holomorphic splitting of the short exact sequence in \eqref{shl}. Let
\begin{equation}\label{beta1}
\beta_1\, :\, J^1(Q) \, \longrightarrow\, {\mathcal H}
\end{equation}
be the homomorphism corresponding to this holomorphic
splitting, so we have $$q'_1\circ \beta_1\,=\, \text{Id}_{J^1(Q)}\, ,$$ where $q'_1$
is the homomorphism in \eqref{shl}.

Using the homomorphism $\iota_1$ in \eqref{e17} and the natural inclusion of sheaves
$$
J^1(J^1(Q))\, \hookrightarrow\, J^1(J^1(Q))\otimes {\mathcal O}_X(S)\, ,
$$
we have the homomorphism
$$
(J^1(Q)\otimes K_X\otimes {\mathcal O}_X(S)) \oplus J^1(J^1(Q))\, \longrightarrow\,
J^1(J^1(Q))\otimes {\mathcal O}_X(S)\, , \ \ (v,\, w)\, \longmapsto\, (\iota_1\otimes {\rm Id})(v)+w\, ,
$$
where ${\rm Id}$ stands for the identity map of ${\mathcal O}_X(S)$. This descends to a homomorphism
$$
\beta_2\, :\, {\mathcal H}\, \longrightarrow\, J^1(J^1(Q))\otimes {\mathcal O}_X(S)\, .
$$
Now we have the composition homomorphism
$$
\beta_2 \circ\beta_1\, :\, J^1(Q) \, \longrightarrow\, J^1(J^1(Q))\otimes {\mathcal O}_X(S)\, ,
$$
where $\beta_1$ is constructed in \eqref{beta1}. Note that $$(\beta_2 \circ\beta_1)\vert_{X'}\,=\,
(q^D_1)^{-1} \, ,$$
where $q^D_1$ is constructed in \eqref{e21}. Finally, consider
$$
((q_1- q_{0*})\otimes {\rm Id}) \circ (\beta_2 \circ\beta_1)\, :\, J^1(Q) \, \longrightarrow\,
Q\otimes K_X\otimes {\mathcal O}_X(S)\, ,
$$
where $q_1- q_{0*}$ is constructed in \eqref{e19}, and ${\rm Id}$ stands for the identity map of ${\mathcal O}_X(S)$.
The restriction of this homomorphism over $X'$ clearly coincides with $((q_1- q_{0*})\vert_{X'})\circ (q^D_1)^{-1}$.
\end{proof}

\begin{definition}\label{def1}
A \textit{special} logarithmic connection on $J^1(Q)$ is a logarithmic connection
$D\, \in\, {\mathcal C}(Q)$ (see Section \ref{quadratic forms} for ${\mathcal C}(Q)$) such that
$$
((q_1- q_{0*})\vert_{X'})\circ (q^D_1)^{-1}\, =\, 0\, ,
$$
where $q_1- q_{0*}$ and $q^D_1$ are the homomorphisms constructed in \eqref{e19} and \eqref{e21} 
respectively; equivalently, the homomorphism $J^1(Q) \, \longrightarrow\,
Q\otimes K_X\otimes {\mathcal O}_X(S)$ given by Lemma \ref{lemext}, whose restriction to $X'$
is $((q_1- q_{0*})\vert_{X'})\circ (q^D_1)^{-1}$, vanishes identically.

The space of special logarithmic connections on $J^1(Q)$ will be denoted by ${\mathcal C}^0(Q)$.
\end{definition}

\begin{proposition}\label{prop3}\mbox{}
\begin{enumerate}
\item The set ${\mathcal C}^0(Q)$ in Definition \ref{def1} is non-empty. More precisely,
for any $D\, \in\, {\mathcal C}(Q)$, there is an
element of ${\rm Aut}(J^1(Q))$ that takes $D$ into ${\mathcal C}^0(Q)$ by the action
in Lemma \ref{lem2}.

\item For any given $D\, \in\, {\mathcal C}^0(Q)$, the corresponding subset 
$$
{\rm image}(\Gamma_D)\,=\,
\Gamma_D(H^0(X,\, K^{\otimes 2}_X\otimes {\mathcal O}_X(S)))\, \subset\, {\mathcal C}(Q)
$$
in Proposition \ref{prop2} coincides with ${\mathcal C}^0(Q)$.
\end{enumerate}
\end{proposition}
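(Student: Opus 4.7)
The plan is to prove Part (2)(a) first by a direct computation, then derive Part (1) using an infinitesimal analysis of the $\mathrm{Aut}(J^1(Q))$-action on $\alpha_D$, and finally deduce Part (2)(b) by a dimension count.

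I would begin with the forward inclusion in Part (2). Fixing $D \in \mathcal{C}^0(Q)$ and $\theta \in H^0(X, K^{\otimes 2}_X \otimes \mathcal{O}_X(S))$, set $D' = D + \psi_*(\theta)$; I claim $\alpha_{D'} = 0$. By construction of $\psi'$, the endomorphism-valued $1$-form $\psi_*(\theta)$ has image in the subsheaf $\iota_0(Q \otimes K_X) \otimes K_X \otimes \mathcal{O}_X(S)$ of $J^1(Q) \otimes K_X \otimes \mathcal{O}_X(S)$. Hence on $X' = X \setminus S$ the $\widetilde{D'}$-flat lift $(q_1^{D'})^{-1}(s)$ of a local section $s$ of $J^1(Q)$ differs from $(q_1^D)^{-1}(s)$ by an element of $\iota_1(\iota_0(Q \otimes K_X) \otimes K_X)$: explicitly, solving $\widetilde{D'}(\tilde s') = 0$ gives the correction $-\iota_1(\psi_*(\theta)(s))$ on $X'$. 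Using $q_1 \circ \iota_1 = 0$, $q_{0*} \circ \iota_1 = \iota_0 \circ (q_0 \otimes \mathrm{Id}_{K_X})$, and $q_0 \circ \iota_0 = 0$, the composition $(q_1 - q_{0*}) \circ \iota_1$ annihilates $\iota_0(Q \otimes K_X) \otimes K_X$. Therefore $\alpha_{D'}|_{X'} = \alpha_D|_{X'} = 0$, and Lemma \ref{lemext} yields $\alpha_{D'} = 0$ on all of $X$, so $D' \in \mathcal{C}^0(Q)$.

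For Part (1), I would analyze how $\alpha_D$ transforms under $\mathrm{Aut}(J^1(Q))$. Any $T \in \mathrm{Aut}(J^1(Q))$ has the form $T = c \cdot \mathrm{Id} + N$ with $c = \pm 1$ and $N$ nilpotent as in \eqref{e11}, so $N^2 = 0$ and $T \circ D - D = [N, D] - N \circ D \circ N$. I would compute $\alpha_{T \circ D} - \alpha_D$ by tracking how the flat lift $(q_1^{T \circ D})^{-1}$ compares with $(q_1^D)^{-1}$, and express the answer in terms of $N$ and the second fundamental form $\eta$ from \eqref{deta}. The key input is that $\eta$ does not vanish on $X \setminus S$ (Claim A in the proof of Theorem \ref{thm1}); combined with the freeness of the $\mathrm{Aut}'(J^1(Q))$-action (Corollary \ref{cor2}), this shows that the map $N \mapsto \alpha_{T \circ D} - \alpha_D$, viewed modulo the $\mathrm{image}(\psi_*)$-invariance from the previous paragraph, is an injection from $H^0(X, K_X)$ into the range of $\alpha$, whose image together with $\mathrm{image}(\psi_*)$ accounts for all of the tangent space to $\mathcal{C}(Q)$. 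Consequently, for every $D \in \mathcal{C}(Q)$ one can choose $N$ so that $T \circ D \in \mathcal{C}^0(Q)$.

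Part (2)(b) then follows by dimension count. Given $D_0 \in \mathcal{C}^0(Q)$, the embedding $\Gamma_{D_0}: H^0(X, K^{\otimes 2}_X \otimes \mathcal{O}_X(S)) \to \mathcal{C}(Q)$ of Proposition \ref{prop2}(2) lands in $\mathcal{C}^0(Q)$ by Part (2)(a). By Part (1) and Proposition \ref{prop2}(3), each $\mathrm{Aut}'(J^1(Q))$-orbit meets $\mathrm{image}(\Gamma_{D_0}) \subset \mathcal{C}^0(Q)$ in at most one point and meets $\mathcal{C}^0(Q)$ in exactly one point, yielding $\dim \mathcal{C}^0(Q) = \dim \mathcal{C}(Q) - \dim \mathrm{Aut}'(J^1(Q)) = \dim H^0(X, K^{\otimes 2}_X \otimes \mathcal{O}_X(S))$. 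The embedding $\Gamma_{D_0}$ between finite-dimensional spaces of equal dimension is surjective, giving $\mathrm{image}(\Gamma_{D_0}) = \mathcal{C}^0(Q)$. The main obstacle lies in Part (1): one must carefully track the change $\alpha_{T \circ D} - \alpha_D$ through the construction in Lemma \ref{lemext} and use the nonvanishing of $\eta$ on $X \setminus S$ to show that the infinitesimal Aut-directions exhaust the relevant cokernel modulo $\mathrm{image}(\psi_*)$.
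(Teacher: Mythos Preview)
Your Part (2)(a) computation is correct and clean: the correction term $-\iota_1(\psi_*(\theta)(s))$ lies in $\iota_1(\iota_0(Q\otimes K_X)\otimes K_X)$, which is annihilated by $q_1-q_{0*}$, so $\alpha_{D+\psi_*(\theta)}=\alpha_D$.

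The rest of the proposal, however, has real gaps. In Part (1) you describe an infinitesimal strategy but never carry it out: you assert that ``one can choose $N$ so that $T\circ D\in{\mathcal C}^0(Q)$'' based on an injectivity statement about $N\mapsto\alpha_{T\circ D}-\alpha_D$, but what is needed is \emph{surjectivity} onto the space where $\alpha_D$ lives, and neither that space nor the image of the map is identified. The claim that the Aut-directions together with $\mathrm{image}(\psi_*)$ ``account for all of the tangent space to $\mathcal C(Q)$'' is not proved and is not obviously true, since $\mathcal C(Q)$ is modeled on a subspace of $H^0(X,\mathrm{End}(J^1(Q))\otimes K_X\otimes\mathcal O_X(S))$ cut out by the residue and trace conditions, and you have not computed its dimension or matched it against $\dim H^0(X,K_X)+\dim H^0(X,K_X^{\otimes2}\otimes\mathcal O_X(S))$. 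In Part (2)(b), the assertion that each $\mathrm{Aut}'(J^1(Q))$-orbit meets $\mathcal C^0(Q)$ in \emph{exactly} one point is unjustified: Part (1) gives only ``at least one''. Without this, the equality $\dim\mathcal C^0(Q)=\dim\mathcal C(Q)-\dim\mathrm{Aut}'(J^1(Q))$ does not follow, and the final surjectivity of $\Gamma_{D_0}$ onto $\mathcal C^0(Q)$ by dimension count collapses (you also have not shown that $\mathcal C^0(Q)$ is a manifold, so speaking of its dimension is premature).

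For comparison, the paper avoids all of this by a direct geometric construction. For Part (1), given $D\in\mathcal C(Q)$ it builds the subsheaf $F'\subset J^1(Q)$ from the residue kernels, gets the induced regular connection $D'$ on $F'$, and then uses $D'$-flat sections to produce \emph{another} inclusion $\varphi:F'\to J^1(Q)$ as in \eqref{vp}. The two inclusions $\iota$ and $\varphi$ have the same kernel over each $y\in S$, so they differ by an automorphism $\xi$ of $J^1(Q)$; the connection $D^1$ transported via $\varphi$ lies in $\mathcal C^0(Q)$ because its flat sections are, by construction, $1$-jets of sections of $Q$, on which $q_1-q_{0*}$ vanishes identically. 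For Part (2) the paper identifies $\mathcal C^0(Q)$ with a space of second-order differential operators $J^2(Q)\to Q\otimes K_X^{\otimes2}\otimes\mathcal O_X(S)$ with prescribed symbol and sub-principal part; two such operators differ by a section of $K_X^{\otimes2}\otimes\mathcal O_X(S)$, which directly gives the affine structure and hence both inclusions at once. This bypasses any need for dimension counts or transversality of orbits.
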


\begin{proof}
Take a logarithmic connection $D\, \in\, {\mathcal C}(Q)$. We will produce an
element of ${\rm Aut}(J^1(Q))$ that takes $D$ into ${\mathcal C}^0(Q)$ by the action
in Lemma \ref{lem2}.

For any $y\, \in\, S$ consider the residue $\text{Res}(D, y)\,\in\, \text{End}(J^1(Q)_y)$. Let
$$\ell'_y\,:= \, \text{kernel}(\text{Res}(D, y))\, \subset\, J^1(Q)_y$$
be the line. Let $F'$ be the holomorphic vector bundle
over $X$ defined by the exact sequence in \eqref{g2}.
From the second part in the proof of Theorem \ref{thm1} we know that
$\bigwedge\nolimits^2 F'\,=\, {\mathcal O}_X$ (see \eqref{dz}), and the line subbundle
$\iota_0(Q\otimes K_X)\, \subset\, J^1(Q)$ in \eqref{e6} produces a line subbundle
\begin{equation}\label{lp2}
L'\, \subset\, F'
\end{equation}
(see \eqref{lp}), so we have an isomorphism
$$
\iota'\, :\, F'/L'\, \longrightarrow\, Q
$$
as in \eqref{itp}. Let
\begin{equation}\label{qp2}
q'\, :\, F'\, \longrightarrow\, F'/L'\,=\, Q
\end{equation}
be the quotient map.

The logarithmic connection $D$ on $J^1(Q)$ produces a holomorphic connection on $F'$;
this holomorphic connection on $F'$ will be denoted by $D'$, as done in the proof of Theorem \ref{thm1}.

Now we have a homomorphism
$$
\varphi\, :\, F'\, \longrightarrow\, J^1(Q)
$$
as in \eqref{vp} that sends any $v\, \in\, F'_x$ to the element of $J^1(Q)_x$
given by the locally defined section $q'(\widehat{v})$ of $Q$, where $\widehat{v}$ is the unique
flat section of $F'$ for the
connection $D'$, defined around $x\, \in\, X$, such that $\widehat{v}(x)\,=\, v$, and
$q'$ is the projection in \eqref{qp2}.

We claim that there is a unique isomorphism
$$
\xi\, :\, J^1(Q) \, \longrightarrow\, J^1(Q)
$$
such that the following diagram of homomorphisms is commutative:
\begin{equation}\label{cd2}
\begin{matrix}
F' & \stackrel{\varphi}{\longrightarrow} & J^1(Q)\\
\Vert && ~\Big\downarrow\xi \\
F' & \stackrel{\iota}{\longrightarrow} & J^1(Q)
\end{matrix}
\end{equation}
where $\iota$ is the homomorphism in \eqref{g2}.

To prove the above claim, first note that $\varphi$ and $\iota$ are isomorphisms
over $X'$ (defined in \eqref{e20}). Therefore, there is an unique automorphism $\xi'$ of
$J^1(Q)\vert_{X'}$ such that the diagram
$$
\begin{matrix}
F'\vert_{X'} & \stackrel{\varphi}{\longrightarrow} & J^1(Q)\vert_{X'}\\
\Vert && ~\Big\downarrow\xi' \\
F'\vert_{X'} & \stackrel{\iota}{\longrightarrow} & J^1(Q)\vert_{X'}
\end{matrix}
 $$
is commutative.

Next, for any $y\, \in\, S$, the kernel of $\varphi(y)$
coincides with the kernel of $\iota(y)$. Indeed, both the kernels coincide with the
line $L'_y\, \subset\, F'_y$ in \eqref{lp2}. From this it follows immediately that
the above automorphism $\xi'$ of $J^1(Q)\vert_{X'}$ extends to an automorphism of
$J^1(Q)$ such that \eqref{cd2} is commutative. Indeed, the data
$(F',\, \{L'_y\, \subset\, F'_y\}_{y\in S})$ determine the bigger subsheaf $J^1(Q)$ uniquely
using the following exact sequence
$$
0\, \longrightarrow\, J^1(Q)^*\, \longrightarrow\, (F')^*\, \longrightarrow\,
\bigoplus_{y\in S} (L'_y)^* \, \longrightarrow\, 0\, .
$$
We may set the above injective homomorphism of sheaves $J^1(Q)^*\, \longrightarrow\, (F')^*$
to be both $\varphi^*$ and $\iota^*$. After doing that we see that the identity map
of $(F')^*$ produces an automorphism of $J^1(Q)^*$. In other words, there is a commutative
diagram of homomorphisms
$$
\begin{matrix}
0 & \longrightarrow & J^1(Q)^* & \stackrel{\iota^*}{\longrightarrow} & (F')^* & \longrightarrow &
\bigoplus_{y\in S} (L'_y)^* & \longrightarrow & 0\\
&&~ \Big\downarrow\xi_1 && \Vert && \Vert\\
0 & \longrightarrow & J^1(Q)^* & \stackrel{\varphi^*}{\longrightarrow} & (F')^* & \longrightarrow &
\bigoplus_{y\in S} (L'_y)^* & \longrightarrow & 0
\end{matrix}
$$
where $\xi_1$ is an isomorphism. Now set $\xi\, =\, \xi^*_1$; clearly, $\xi\vert_{X'}\,=\, \xi'$.
Therefore, we have shown
that there is a unique automorphism $\xi$ of $J^1(Q)$ such that the diagram in 
\eqref{cd2} is commutative.

As in the first part of the proof of Theorem \ref{thm1}, let $D^1$ denote the logarithmic
connection on $J^1(Q)$ induced by the holomorphic connection $D'$ on $F'$ using the
homomorphism $\varphi$ in \eqref{cd2}. From the commutativity of the diagram in \eqref{cd2} it follows
immediately that the automorphism $\xi$ of $J^1(Q)$ takes the logarithmic connection $D^1$
to the logarithmic connection $D$. Indeed, $\varphi$ and $\iota$ take the connection
$D'$ on $F'$ to $D^1$ and $D$ respectively.

We know from the proof of Theorem \ref{thm1} that $D^1\, \in\, {\mathcal C}(Q)$.
We will now show that
\begin{equation}\label{sd1}
D^1\, \in\, {\mathcal C}^0(Q)\, .
\end{equation}

Over the open subset $X'$ in \eqref{e20}, the homomorphism $\varphi$ in
\eqref{cd2} is an isomorphism. Take any $x\,\in\, X'$ and $v\, \in\, F'_x$. Let
$\widehat{v}$ be the unique flat section of $F'$, for the holomorphic connection 
$D'\vert_{X'}$, defined on a simply connected neighborhood $U\, \subset\, X'$ of $x$
such that $\widehat{v}(x)\,=\, v$. Consider the section $q'(\widehat{v})\, \in\,
H^0(U,\, Q)$, where $q'$ is the projection in \eqref{qp2}. Let
\begin{equation}\label{j2}
q'(\widehat{v})_1\, \in\, H^0(U,\, J^1(Q))
\end{equation}
be the section defined by $q'(\widehat{v})$. Let
\begin{equation}\label{j1}
q'(\widehat{v})^1_1\, \in\, H^0(U,\, J^1(J^1(Q)))
\end{equation}
be the section defined by $q'(\widehat{v})_1$.

Construct
$K(\widetilde{D^1})\, \subset\, J^1(J^1(Q))$ as in \eqref{kd} by substituting $D$ in
\eqref{kd} by the above logarithmic connection $D^1$. Similarly, define
$$
q^{D^1}_1\, :=\, (q_1\vert_{K(\widetilde{D^1})})\vert_{X'}\, :\, K(\widetilde{D^1})\vert_{X'}\,
\longrightarrow\, J^1(Q)\vert_{X'}
$$
as done in \eqref{e21}, where $q_1$ is the projection in \eqref{e17}.

Now from the construction of the logarithmic connection $D^1$ on $J^1(Q)$ we have
$$
q'(\widehat{v})^1_1\, \in\, H^0(U,\, K(\widetilde{D^1}))\, ,
$$
where $q'(\widehat{v})^1_1$ is constructed in \eqref{j1}, and moreover,
$$
q^{D^1}_1(q'(\widehat{v})^1_1)\,=\, q'(\widehat{v})_1\, \in\, H^0(U,\, J^1(Q))\, ,
$$
where $q'(\widehat{v})_1$ is constructed in \eqref{j2}. On the other hand,
we evidently have
$$
q_1(q'(\widehat{v})^1_1)\,=\, q_{0*}(q'(\widehat{v})^1_1)\, \in\, H^0(U,\, J^1(Q))\, ,
$$
where $q_{0*}$ is constructed in \eqref{e18}, because the section $q'(\widehat{v})^1_1$ of
$J^1(J^1(Q))$ is given by a section of $Q$ (namely, $q'(\widehat{v})$). In other words, we have
$((q_1- q_{0*})\vert_{X'})\circ (q^{D^1}_1)^{-1}\, =\, 0$. This proves \eqref{sd1} and the first part of the proposition.

To prove the second part of the proposition, we first note that the second jet bundle
fits in an exact sequence
$$
0\, \longrightarrow\, Q\otimes K^{\otimes 2}_X \,\stackrel{\iota_2}{\longrightarrow}\, J^2(Q)
\,\stackrel{q_2}{\longrightarrow}\, J^1(Q) \, \longrightarrow\, 0
$$
(see \eqref{je1}). The exact sequence in
\eqref{e17} and the homomorphism $q_1- q_{0*}$ in \eqref{e19} fit in the following commutative
diagram of homomorphisms
\begin{equation}\label{cd3}
\begin{matrix}
&& 0 && 0 && 0\\
&& \Big\downarrow && \Big\downarrow && \Big\downarrow\\
0 & \longrightarrow & Q\otimes K^{\otimes 2}_X & \stackrel{\iota_2}{\longrightarrow} & J^2(Q)
& \stackrel{q_2}{\longrightarrow} & J^1(Q) & \longrightarrow & 0\\
&& ~\,~\,~\,~\,~\,~\,~\,~\,~\,~\,~\,\Big\downarrow\iota_0\otimes{\rm Id}_{K_X} && ~ \Big\downarrow\mu && \Vert\\
0 & \longrightarrow & J^1(Q)\otimes K_X & \stackrel{\iota_1}{\longrightarrow} & J^1(J^1(Q))
& \stackrel{q_1}{\longrightarrow} & J^1(Q) & \longrightarrow & 0\\
&& ~\,~\,~\,~\,~\,~\,~\,~\,~\,~\,~\,~\,\Big\downarrow q_0\otimes{\rm Id}_{K_X} &&
~\,~\,~\,~\,~\,~\,~\,~\,~\,~\,\Big\downarrow q_1- q_{0*} \\
&& Q\otimes K_X & = & Q\otimes K_X\\
&& \Big\downarrow && \Big\downarrow && \\
&& 0 && 0
\end{matrix}
\end{equation}
of exact rows and columns, where the first vertical sequence is the exact
sequence in \eqref{e6} tensored with $K_X$ and $\mu$ is constructed in a
standard way.

Take any logarithmic connection
$$
D\, :\, J^1(J^1(Q))\, \longrightarrow\, J^1(Q)\otimes K_X\otimes {\mathcal O}_X(S)
$$
on $J^1(Q)$ such that $D\, \in\, {\mathcal C}^0(Q)$. The condition in Definition \ref{def1} that
$((q_1- q_{0*})\vert_{X'})\circ (q^D_1)^{-1}\, =\, 0$ implies that from \eqref{cd3} we have a commutative
diagram of exact columns
$$
\begin{matrix}
0 && 0\\
\Big\downarrow && \Big\downarrow\\
J^2(Q) & \stackrel{\mu_D}{\longrightarrow} & Q\otimes K^{\otimes 2}_X \otimes {\mathcal O}_X(S)\\
~\Big\downarrow\mu && ~\,~\,~\,~\,~\,~\,~\,~\,~\,\Big\downarrow\iota_0\otimes{\rm Id} \\
J^1(J^1(Q)) & \stackrel{ D}{\longrightarrow} & J^1(Q)\otimes K_X\otimes {\mathcal O}_X(S)\\
~\,~\,~\,~\,~\,~\,~\,~\,~\,~\,\Big\downarrow q_1- q_{0*} && 
~\,~\,~\,~\,~\,~\,~\,~\,~\,\Big\downarrow q_0\otimes {\rm Id}\\
Q \otimes K_X & \hookrightarrow & Q \otimes K_X\otimes {\mathcal O}_X(S)\\
\Big\downarrow && \Big\downarrow\\
0 && 0
\end{matrix}
$$
where $\text{Id}$ stands for the identity map of $K_X\otimes {\mathcal O}_X(S)$, while
$\iota_0$ and $q_0$ are the homomorphisms in \eqref{e6}; the above homomorphism
$\mu_D$ is uniquely defined by the commuting diagram. In other words,
\begin{equation}\label{md}
\mu_D\, \in\, H^0(X,\, \text{Diff}^2(Q,\, Q\otimes K^{\otimes 2}_X \otimes {\mathcal O}_X(S)))
\end{equation}
is a second order holomorphic differential operator.

Recall that the logarithmic connection on $\bigwedge^2 J^1(Q)\,=\, {\mathcal O}_X(S)$ induced by
$D$ coincides with the tautological connection ${\mathcal D}_S$ on ${\mathcal O}_X(S)$ given by the
de Rham differential (the third condition in the definition of ${\mathcal C}(Q)$). From this it
follows that the two second order differential operators given by any two elements of
${\mathcal C}^0(Q)$ differ by a $0$-th order differential operator, or in other words, they differ
by a holomorphic section of
$$
\text{Hom}(Q,\, Q\otimes K^{\otimes 2}_X \otimes {\mathcal O}_X(S))\,=\, K^{\otimes 2}_X \otimes {\mathcal O}_X(S)\, ;
$$
this is elaborated in Section \ref{se7}.

Conversely, take any surjective homomorphism
$$
\theta\, :\, J^2(Q)\, \longrightarrow\, Q\otimes K^{\otimes 2}_X\otimes {\mathcal O}_X(S)\, \longrightarrow\, 0
$$
such that the composition
$$
\theta\circ \iota_2\, :\, Q\otimes K^{\otimes 2}_X\, \longrightarrow\,
Q\otimes K^{\otimes 2}_X\otimes {\mathcal O}_X(S)
$$
coincides with the natural inclusion of the sheaf $Q\otimes K^{\otimes 2}_X$ in $Q\otimes K^{\otimes 2}_X
\otimes {\mathcal O}_X(S)$, where $\iota_2$ is the homomorphism in \eqref{cd3}.

Let ${\mathcal K}_\theta\, :=\, \text{kernel}(\theta)\, \subset\, J^2(Q)$ be the kernel.
The quotient $${\mathcal V}\, :=\, J^1(J^1(Q))/\mu({\mathcal K}_\theta)\, ,$$
where $\mu$ is the homomorphism in \eqref{cd3}, fits in the following exact sequence:
$$
0\, \longrightarrow\, Q\otimes K^{\otimes 2}_X\otimes {\mathcal O}_X(S)
\, \longrightarrow\, {\mathcal V}\, \stackrel{q_1-q_{0*}}{\longrightarrow}\, Q\otimes K_X
\, \longrightarrow\, 0\, ,
$$
where $q_1-q_{0*}$ is the restriction of the homomorphism $q_1-q_{0*}$ in \eqref{cd3}; in fact,
the above exact sequence fits in the commutative diagram
\begin{equation}\label{cdl}
\begin{matrix}
0 & \longrightarrow & Q\otimes K^{\otimes 2}_X &
\stackrel{\iota_0\otimes{\rm Id}_{K_X}}{\longrightarrow} & J^1(Q)\otimes K_X &
\stackrel{q_0\otimes{\rm Id}_{K_X}}{\longrightarrow} & Q\otimes K_X & \longrightarrow & 0\\
&& ~\, \Big\downarrow \widehat{\iota} && ~\Big\downarrow\varpi && \Vert\\
0 & \longrightarrow & Q\otimes K^{\otimes 2}_X\otimes {\mathcal O}_X(S)
& \longrightarrow & {\mathcal V} & \stackrel{q_1-q_{0*}}{\longrightarrow} & Q\otimes K_X
& \longrightarrow & 0
\end{matrix}
\end{equation}
where $\varpi$ is given by the inclusion $J^1(Q)\otimes K_X\, \hookrightarrow\, J^1(J^1(Q))$
(see \eqref{je1}) and
the map $\widehat{\iota}$ is the natural inclusion of sheaves. From this it follow that
$$
{\mathcal V}\, \hookrightarrow\, J^1(Q)\otimes K_X\otimes {\mathcal O}_X(S)\, .
$$
Now consider the composition homomorphism
\begin{equation}\label{cdo}
J^1(J^1(Q))\, \longrightarrow\, J^1(J^1(Q))/\mu({\mathcal K}_\theta)\,=\,{\mathcal V}\,
\hookrightarrow\, J^1(Q)\otimes K_X\otimes {\mathcal O}_X(S)\, .
\end{equation}
The first order holomorphic differential operator defined by it is a logarithmic
connection on $J^1(Q)$ singular over $S$.

Now it follows that the space of special logarithmic connections ${\mathcal C}^0(Q)$ is
identified with the space of all surjective homomorphisms
$$
\theta\, :\, J^2(Q)\, \longrightarrow\, Q\otimes K^{\otimes 2}_X\otimes {\mathcal O}_X(S)\, \longrightarrow\, 0
$$
such that
\begin{itemize}
\item the composition
$$
\theta\circ \iota_2\, :\, Q\otimes K^{\otimes 2}_X\, \longrightarrow\,
Q\otimes K^{\otimes 2}_X\otimes {\mathcal O}_X(S)
$$
coincides with $\widehat{\iota}$ in \eqref{cdl}, and

\item the corresponding logarithmic connection on $J^1(Q)$ has the property that 
the induced logarithmic connection on $\bigwedge^2 J^1(Q)\,=\, {\mathcal O}_X(S)$ coincides with
the canonical logarithmic connection ${\mathcal D}_S$ on ${\mathcal O}_X(S)$ given by the de Rham
differential.
\end{itemize}
From this the second part of the proposition follows.
\end{proof}

Consider the action of ${\rm Aut}(J^1(Q))$ on ${\mathcal C}(Q)$ in Lemma \ref{lem2}.

\begin{corollary}\label{cor3}
The quotient space ${\mathcal C}(Q)/{\rm Aut}(J^1(Q))$ is canonically identified with
${\mathcal C}^0(Q)$.
\end{corollary}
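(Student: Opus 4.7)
The plan is to show that the natural composition
\[
\Phi\, :\, {\mathcal C}^0(Q)\, \hookrightarrow\, {\mathcal C}(Q)\, \longrightarrow\, {\mathcal C}(Q)/{\rm Aut}(J^1(Q))
\]
is a bijection. The required identification is then $\Phi$ itself. Note that ${\rm Aut}(J^1(Q))$ is the group acting on ${\mathcal C}(Q)$ that was introduced just before Lemma \ref{lem2}, and this action preserves ${\mathcal C}(Q)$ by Lemma \ref{lem2}, so $\Phi$ is well-defined.

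For surjectivity, I invoke Proposition \ref{prop3}(1) directly: for any $D \,\in\, {\mathcal C}(Q)$, there exists $T\,\in\,{\rm Aut}(J^1(Q))$ with $T\circ D\,\in\,{\mathcal C}^0(Q)$. Hence every orbit under ${\rm Aut}(J^1(Q))$ meets the subset ${\mathcal C}^0(Q)$, which is exactly the statement that $\Phi$ is surjective.

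For injectivity, suppose $D_1,\, D_2\,\in\,{\mathcal C}^0(Q)$ lie in the same ${\rm Aut}(J^1(Q))$-orbit, so that $D_2\,\in\,{\rm Orb}(D_1)$. By Proposition \ref{prop3}(2), the image of the embedding
\[
\Gamma_{D_1}\, :\, H^0(X,\, K^{\otimes 2}_X\otimes {\mathcal O}_X(S))\, \longrightarrow\, {\mathcal C}(Q)
\]
of Proposition \ref{prop2} coincides with ${\mathcal C}^0(Q)$. In particular $D_2\,\in\,{\rm image}(\Gamma_{D_1})$. But Proposition \ref{prop2}(3) asserts that ${\rm image}(\Gamma_{D_1})\cap {\rm Orb}(D_1)\,=\,\{D_1\}$, since $D_1\,\in\,{\rm image}(\Gamma_{D_1})$. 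Since $D_2$ lies in both ${\rm image}(\Gamma_{D_1})$ and ${\rm Orb}(D_1)$, we conclude $D_1\,=\, D_2$. This gives injectivity of $\Phi$.

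The corollary follows at once. I do not anticipate any real obstacle here: essentially all the content sits in Proposition \ref{prop2}(3) (uniqueness of the intersection of an affine slice with an orbit) and in the two parts of Proposition \ref{prop3} (existence of a representative in the slice, and identification of the slice with the affine image of $\Gamma_D$). The only point to be careful about is to use Proposition \ref{prop3}(2) starting from the base point $D_1\,\in\,{\mathcal C}^0(Q)$ rather than a generic $D\,\in\,{\mathcal C}(Q)$, so that $D_2$ is indeed in ${\rm image}(\Gamma_{D_1})$; this is legitimate because the assertion of Proposition \ref{prop3}(2) is stated for an arbitrary $D\,\in\,{\mathcal C}^0(Q)$.
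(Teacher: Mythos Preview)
Your proof is correct and follows essentially the same approach as the paper: the paper also shows that the composition ${\mathcal C}^0(Q)\hookrightarrow{\mathcal C}(Q)\to{\mathcal C}(Q)/{\rm Aut}(J^1(Q))$ is injective via Proposition~\ref{prop2}(3) and Proposition~\ref{prop3}(2), and surjective via Proposition~\ref{prop3}(1). Your version simply spells out the injectivity argument in more detail.
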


\begin{proof}
From Proposition \ref{prop2}(3) and Proposition \ref{prop3}(2) it follows that the composition
$$
{\mathcal C}^0(Q)\, \hookrightarrow\, {\mathcal C}(Q)
\, \longrightarrow\, {\mathcal C}(Q)/{\rm Aut}(J^1(Q))
$$
is injective. This composition is surjective by Proposition \ref{prop3}(1).
\end{proof}

\begin{corollary}\label{cor4}
The space of all branched projective structures on $X$ with branching divisor $S$ is canonically identified
with a subset of ${\mathcal C}^0(Q)$ consisting of all logarithmic connections $D$ for which
the homomorphism $\rho(D, y)$ in Proposition \ref{propn1} vanishes for every $y\,\in\, S$.

This space ${\mathcal C}^0(Q)$ is an affine space over the vector space $H^0(X,\,K^{\otimes 2}_X\otimes {\mathcal O}_X(S))$ whose
complex dimension is $3g-3+d$.
\end{corollary}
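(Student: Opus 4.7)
The first assertion is essentially a matter of assembling the previously established pieces. By Theorem \ref{thm1}, giving a branched projective structure on $X$ with branching divisor $S$ amounts to giving a logarithmic connection $D^1$ on $J^1(Q)$ satisfying the three conditions defining membership in $\mathcal C(Q)$ together with the additional requirement that $\rho(D^1,y)=0$ for every $y\in S$. The plan is then to restrict this identification to $\mathcal C^0(Q)$ via Corollary \ref{cor3}. The key subsidiary fact I would verify is that the vanishing condition $\rho(D^1,y)=0$ is invariant under the action of ${\rm Aut}(J^1(Q))$ on $\mathcal C(Q)$: if $T\in {\rm Aut}(J^1(Q))$, then $T$ is of the shape $\pm\text{Id}_{J^1(Q)}+N$ with $N$ nilpotent and $N(J^1(Q))\subset \iota_0(Q\otimes K_X)$ (by \eqref{e11}), so $T$ fixes the subbundle $\iota_0(Q\otimes K_X)\subset J^1(Q)$ and acts as $\pm 1$ on both the quotient $Q$ and on $\iota_0(Q\otimes K_X)$. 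Consequently, the eigen-line decomposition of ${\rm Res}(D^1,y)$ at each $y\in S$ is preserved, and the induced homomorphism $\rho$ is carried to itself up to a sign. Alternatively, one may invoke the second statement of Proposition \ref{propn1} which characterizes $\rho(D^1,y)=0$ as the triviality of the local monodromy around $y$; since $T\circ D^1$ and $D^1$ induce the same flat projective connection on $\mathbb{P}(J^1(Q))$ (because $T$ acts trivially on $\bigwedge^2 J^1(Q)$), they have identical local monodromies and the invariance is manifest.

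Combining this invariance with Corollary \ref{cor3} gives a canonical bijection between $\mathcal C(Q)/{\rm Aut}(J^1(Q))$ and $\mathcal C^0(Q)$ that respects the $\rho=0$ condition on each side. Composing with the bijection of Theorem \ref{thm1} (passed to the quotient via Remark \ref{remnu}), I obtain the desired canonical identification of $\mathcal P_S$ with the subset of $\mathcal C^0(Q)$ cut out by the conditions $\rho(D,y)=0$ for $y\in S$.

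For the second assertion, the affine space structure is immediate from Proposition \ref{prop3}. Indeed, fix any $D_0\in \mathcal C^0(Q)$ (which exists by Proposition \ref{prop3}(1), in turn a consequence of Corollary \ref{cor1}). By Proposition \ref{prop2}(2) the map $\Gamma_{D_0}$ is an embedding of $H^0(X,K_X^{\otimes 2}\otimes \mathcal O_X(S))$ into $\mathcal C(Q)$ and by Proposition \ref{prop3}(2) its image is precisely $\mathcal C^0(Q)$. Thus $\Gamma_{D_0}$ identifies $\mathcal C^0(Q)$ with the vector space $H^0(X,K_X^{\otimes 2}\otimes \mathcal O_X(S))$ translated so that $D_0$ is the origin; the resulting affine structure is independent of $D_0$ since a different choice just shifts the basepoint.

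The dimension count is a standard application of Riemann--Roch and Serre duality, which I would carry out as follows. The degree of $K_X^{\otimes 2}\otimes \mathcal O_X(S)$ is $4g-4+d$, so
\begin{equation*}
h^0(X,K_X^{\otimes 2}\otimes \mathcal O_X(S)) - h^1(X,K_X^{\otimes 2}\otimes \mathcal O_X(S)) = (4g-4+d) - (g-1) = 3g-3+d.
\end{equation*}
By Serre duality, $h^1(X,K_X^{\otimes 2}\otimes \mathcal O_X(S)) = h^0(X,K_X\otimes (K_X^{\otimes 2}\otimes \mathcal O_X(S))^{-1}) = h^0(X,K_X^{-1}\otimes \mathcal O_X(-S))$; since $S$ is a nonempty effective divisor and $\deg(K_X^{-1}\otimes \mathcal O_X(-S))=-2g+2-d<0$ under Assumption \ref{a0} (for $g\ge 1$ and $d\ge 1$), this $H^1$ vanishes, yielding the stated dimension $3g-3+d$. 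The main thing to be careful about is just that the $\rho=0$ condition really does descend cleanly to $\mathcal C^0(Q)$; but once the invariance above is noted, the rest of the argument is bookkeeping.
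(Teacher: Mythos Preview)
Your proof is correct and follows essentially the same route as the paper: invoke Theorem \ref{thm1} to identify branched projective structures with the $\rho=0$ locus in $\mathcal C(Q)/{\rm Aut}(J^1(Q))$, then use Corollary \ref{cor3} and Proposition \ref{prop3} to pass to $\mathcal C^0(Q)$, with the dimension coming from Riemann--Roch. You add explicit justification for two points the paper leaves implicit, namely the ${\rm Aut}(J^1(Q))$-invariance of the condition $\rho(D,y)=0$ (your second argument via Proposition \ref{propn1}(2) and conjugacy of local monodromy is the cleaner of the two) and the vanishing of $h^1$ in the Riemann--Roch computation; both are welcome clarifications rather than a different approach.
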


\begin{proof}
From Theorem \ref{thm1} it follows that the space of all branched projective structures on $X$ with 
branching type $S$ is identified with the subset of the quotient space ${\mathcal C}(Q)/{\rm Aut}(J^1(Q))$
given by all logarithmic connections $D$ for which
the homomorphism $\rho(D, y)$ in Proposition \ref{propn1} vanishes for every $y\,\in\, S$. Now Proposition \ref{prop3} 
(2) and Corollary \ref{cor3} together complete the proof. The dimension count follows from the Riemann--Roch theorem.
\end{proof}

\begin{remark}\label{rem-sub}
For each point $y\, \in\, S$, the condition $\rho(D, y)\,=\, 0$ on ${\mathcal C}^0(Q)$ defines a hypersurface of
${\mathcal C}^0(Q)$ of
codimension one. Hence the space of all branched projective structures on $X$ with branching divisor $S$ is a
subspace of ${\mathcal C}^0(Q)$ of codimension $d$ at a generic point. 
Similarly, in Mandelbaum's work in \cite{M1} and \cite{M2}, there is a codimension one condition at each branch point that arises as an 
 ``integrability condition'' (also called indicial equation) for the Schwarzian equation at the poles (\textit{cf.} Lemma 1 of \cite{Hejhal} or
\S~11.2 of \cite{GKM}). 
 \end{remark}

\section{Second order differential operators}\label{se7}

Fix a holomorphic line bundle $Q$ on $X$ as in \eqref{dq}. Take a second order holomorphic
differential operator from $Q$ to $Q\otimes K^{\otimes 2}_X\otimes {\mathcal O}_X(S)$
$$
D\, \in\, H^0(X,\, \text{Diff}^2(Q,\, Q\otimes K^{\otimes 2}_X\otimes {\mathcal O}_X(S)))\, .
$$
Its symbol is
$$
\gamma_2(D)\, \in\, H^0(X,\, \text{Hom}(Q,\, Q\otimes {\mathcal O}_X(S)))
\,=\, H^0(X,\, {\mathcal O}_X(S))\, ,
$$
where $\gamma_2$ is the homomorphism in \eqref{symb}.

Assume that this symbol $\gamma_2(D)$ is the section of ${\mathcal O}_X(S)$ given by the
constant function $1$. We also assume that the homomorphism $D\, :\, J^2(Q)\, \longrightarrow\,
Q \otimes K^{\otimes 2}_X\otimes {\mathcal O}_X(S)$ is surjective.

Take any $x\, \in\, X' =\, X\setminus S$ (see \eqref{e20}). Let $z$ be a holomorphic coordinate function on
a neighborhood $U\, \subset\, X'$ of $x$. Note that the restriction ${\mathcal O}_X(S)\vert_{X'}$
is the trivial holomorphic line bundle equipped with the holomorphic trivialization given by the
constant function $1$. So, we have
$$
(Q\vert_{X'})^{\otimes 2}\,=\, TX'\, .
$$
Let $(\partial_z)^{1/2}$ denote a holomorphic section of $Q\vert_U$ such that the
section $(\partial_z)^{1/2}\otimes (\partial_z)^{1/2}$ of $Q^{\otimes 2}\vert_U$ coincides with the
section $\frac{\partial}{\partial z}$ of $TU$. Note that the section $(\partial_z)^{1/2}$ produces a
holomorphic trivialization of $Q\vert_U$ because it does not vanish on any point of $U$.
Clearly, $(\partial_z)^{1/2}\otimes (dz)^{\otimes 2}$ is a
holomorphic section of $(Q\otimes K^{\otimes 2}_X)\vert_U$ which
does not vanish on any point of $U$. Consider the restriction $D\vert_U$ of the
differential operator $D$. For any holomorphic function $f$ on $U$, the holomorphic section
$D\vert_U(f\cdot (\partial_z)^{1/2})$ of $(Q\otimes K^{\otimes 2}_X)\vert_U$ is of the form
$$
D\vert_U(f\cdot (\partial_z)^{1/2})\,=\,
(\frac{d^2f}{dz^2} + a\cdot \frac{df}{dz} +b)(\partial_z)^{1/2}\otimes (dz)^{\otimes 2}\, ,
$$
where $a$ and $b$ are fixed holomorphic functions on $U$ independent of $f$; the coefficient of
$\frac{d^2f}{dz^2}$ is $1$ because the symbol of $D\vert_U$ is the constant function $1$.

It is straight-forward to check that the condition that
\begin{equation}\label{a}
a\,=\, 0
\end{equation}
is independent of the holomorphic coordinate function $z$. In other words, if we replace $z$ by
another holomorphic coordinate function $z_1$ on $U$, then $(\partial_z)^{1/2}$ will change,
hence $a$ and $b$ will change. But if $a\,=\, 0$ for one coordinate function, then $a$ vanishes for
all coordinate functions. Therefore, the condition that $a\,=\, 0$ is well-defined. We will
explain this condition in \eqref{a} intrinsically without using coordinates.

First recall from \eqref{cdo} that the differential operator $D$ gives a logarithmic connection
on $J^1(Q)$ singular over $S$. We will denote this logarithmic connection by $\widetilde{D}$.
Let $\widetilde{D}^{\rm det}$ be the logarithmic connection on $\bigwedge^2 J^1(Q)\,=\,
{\mathcal O}_X(S)$ induced by $\widetilde{D}$. On the other hand, ${\mathcal O}_X(S)$ has the
tautological logarithmic connection ${\mathcal D}_S$ given by the de Rham differential.
Therefore, we have
$$
\widetilde{D}^{\rm det} - {\mathcal D}_S\, \in\, H^0(X, \, K_X\otimes {\mathcal O}_X(S))\, .
$$
Since the residues of $\widetilde{D}^{\rm det}$ and ${\mathcal D}_S$ coincides (both are
$-1$ at each point of $S$), it follows that
$$
\widetilde{D}^{\rm det} - {\mathcal D}_S\, \in\, H^0(X, \, K_X)\, \subset\,
H^0(X, \, K_X\otimes {\mathcal O}_X(S))\, .
$$
Moreover, $a$ in \eqref{a} satisfies the equation
\begin{equation}\label{condet}
(\widetilde{D}^{\rm det} - {\mathcal D}_S)\vert_U\,=\, a\cdot dz\, .
\end{equation}
Therefore, the condition in \eqref{a} holds if and only if the two logarithmic connections
$\widetilde{D}^{\rm det}$ and ${\mathcal D}_S$ coincide.
In particular, the holomorphic one-form $a\cdot dz$ on $U$ is independent of the choice of the holomorphic
coordinate function $z$.

\begin{lemma}\label{lem4}
The space of all branched projective structures on $X$ with branching type $S$ is canonically identified
with a subset of the space of all second order holomorphic differential operators
$$
D\, \in\, H^0(X,\, {\rm Diff}^2(Q,\, Q\otimes K^{\otimes 2}_X\otimes {\mathcal O}_X(S)))
$$
satisfying the following three conditions:
\begin{enumerate}
\item the homomorphism $D\, :\, J^2(Q)\, \longrightarrow\,
Q \otimes K^{\otimes 2}_X\otimes {\mathcal O}_X(S)$ is surjective,

\item the symbol of $D$ is the section of ${\mathcal O}_X(S)$ given by the constant function $1$, and

\item the condition in \eqref{a} holds.
\end{enumerate}
\end{lemma}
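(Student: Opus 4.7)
The plan is to exploit the bijective correspondence between $\mathcal{C}^0(Q)$ and the relevant space of second order differential operators that is already implicit in the proof of Proposition \ref{prop3}(2), and then combine this with Corollary \ref{cor4}.

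First, I would recall that Corollary \ref{cor4} identifies the space of branched projective structures on $X$ with branching divisor $S$ as a subset of $\mathcal{C}^0(Q)$. Thus it suffices to exhibit a natural bijection between $\mathcal{C}^0(Q)$ and the space of second order holomorphic differential operators $D\,:\,J^2(Q)\,\longrightarrow\, Q\otimes K_X^{\otimes 2}\otimes\mathcal{O}_X(S)$ that satisfy the three listed conditions. In one direction, given an element $D\,\in\, \mathcal{C}^0(Q)$, the construction in \eqref{md} in the proof of Proposition \ref{prop3}(2) produces a second order differential operator $\mu_D$, obtained by restricting $D\,:\,J^1(J^1(Q))\,\longrightarrow\, J^1(Q)\otimes K_X\otimes\mathcal{O}_X(S)$ via the inclusion $\mu\,:\,J^2(Q)\,\longrightarrow\, J^1(J^1(Q))$ and projecting to the kernel $Q\otimes K_X^{\otimes 2}\otimes\mathcal{O}_X(S)$ of $q_0\otimes\text{Id}$. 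The conditions of Definition \ref{def1} guarantee that the resulting diagram in the proof of Proposition \ref{prop3}(2) closes up, producing a well-defined $\mu_D$.

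Next I would check the three conditions. Surjectivity of $\mu_D$ and the fact that its symbol is the constant $1\in H^0(X, \mathcal{O}_X(S))$ follow directly from the commutative diagram \eqref{cd3} together with the compatibility $\mu_D\circ\iota_2\,=\,\iota_0\otimes\text{Id}_{K_X}$; indeed, tracing through the construction shows that on a local coordinate chart $(U, z)$ with $x_i\notin U$, the operator $\mu_D$ sends $f\cdot(\partial_z)^{1/2}$ to $\bigl(\frac{d^2f}{dz^2}+a\cdot\frac{df}{dz}+b\bigr)(\partial_z)^{1/2}\otimes (dz)^{\otimes 2}$ with symbol $1$. For condition (3), note that $\mu_D$ induces (by \eqref{cdo} in the converse construction of Proposition \ref{prop3}(2)) a logarithmic connection $\widetilde{\mu_D}$ on $J^1(Q)$ that agrees with $D$. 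So the induced connection on $\bigwedge^2 J^1(Q)\,=\,\mathcal{O}_X(S)$ is $\widetilde{D}^{\rm det}$, and by the third condition in the definition of $\mathcal{C}(Q)$ (hence of $\mathcal{C}^0(Q)$), this equals $\mathcal{D}_S$. By \eqref{condet} this is precisely the statement that $a\,=\,0$.

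Conversely, given $D$ satisfying the three conditions, the converse construction in the proof of Proposition \ref{prop3}(2) (using ${\mathcal{K}_\theta}\,:=\,\text{kernel}(D)$ and the quotient $\mathcal{V}\,=\,J^1(J^1(Q))/\mu(\mathcal{K}_\theta)$ in the diagram \eqref{cdl}) produces a logarithmic connection $\widetilde{D}\,\in\,\mathcal{C}^0(Q)$: the residue conditions in $\mathcal{C}(Q)$ follow automatically from the shape of the construction, condition (3) in the definition of $\mathcal{C}(Q)$ corresponds to $a\,=\,0$ via \eqref{condet}, and the defining vanishing of Definition \ref{def1} is built into the construction by the way $\mu(\mathcal{K}_\theta)$ is modded out. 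The two constructions are mutually inverse, again by the proof of Proposition \ref{prop3}(2).

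The main obstacle I anticipate is the careful bookkeeping in showing that condition (3), stated coordinate-by-coordinate as $a\,=\,0$, is genuinely equivalent to the global condition $\widetilde{D}^{\rm det}\,=\,\mathcal{D}_S$; while the identity \eqref{condet} is local, one must verify that the residues at points of $S$ automatically match (both equal $-1$) so that the difference $\widetilde{D}^{\rm det}-\mathcal{D}_S$ actually lies in $H^0(X, K_X)$, making the pointwise vanishing on $X\setminus S$ equivalent to the global equality. Everything else is essentially a translation of the bijection $\mathcal{C}^0(Q)\,\longleftrightarrow\,\{\text{second-order operators}\}$ established in Proposition \ref{prop3}(2), restricted further by the subset of Corollary \ref{cor4}.
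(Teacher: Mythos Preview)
Your proposal is correct and follows essentially the same approach as the paper: both arguments reduce the lemma to the bijection between $\mathcal{C}^0(Q)$ and the space of second order operators satisfying the three conditions (established in the proof of Proposition \ref{prop3}(2)), use \eqref{condet} to match condition (3) with the determinant condition defining $\mathcal{C}(Q)$, and then invoke Corollary \ref{cor4}. Your version spells out more of the verification details, but the structure and key ingredients are identical.
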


\begin{proof}
In the proof of Proposition \ref{prop3} it was shown that ${\mathcal C}^0(Q)$
is identified with the space of all holomorphic differential operators
$$
D\, \in\, H^0(X,\, \text{Diff}^2(Q,\, Q\otimes K^{\otimes 2}_X\otimes {\mathcal O}_X(S)))
$$
satisfying the three conditions in the statement of the lemma. Note that for any logarithmic
connection $D\, \in\, {\mathcal C}^0(Q)$ on $J^1(Q)$, the connection on
$\bigwedge^2 J^1(Q)\,=\, {\mathcal O}_X(S)$ induced by $D$ coincides with the one given by
the de Rham differential. Therefore, from \eqref{condet} if follows that for the
second order differential operator corresponding to $D$, the condition
in \eqref{a} holds. Now the lemma follows from Corollary \ref{cor4}.
\end{proof}


\end{document}